\DeclareMathAlphabet{\mathpzc}{OT1}{pzc}{m}{it}
\newcommand{\ra}{\rightarrow}
\newcommand{\lnd}{\mbox{\rm LND}}
\newtheorem{thm}{Theorem}[section]
\newtheorem{lem}[thm]{Lemma}
\newtheorem{prop}[thm]{Proposition} 
\newtheorem{cor}[thm]{Corollary}
\newtheorem{rem}[thm]{Remark}
\newtheorem{ex}[thm]{Example}
\newcommand{\m}{\mathpzc{m}}
\newcommand{\n}{\mathpzc{n}}
\newcommand{\p}{\mathpzc{p}}
\newcommand{\q}{\mathpzc{q}}
\newcommand{\BZ}{\mathbb Z}
\newcommand{\BQ}{\mathbb Q}
\newcommand{\BC}{\mathbb C}
\newcommand{\Ga}{\mathbb G}
\newcommand{\ot}{\mbox{$\otimes$}}
\title{Some $K$-theoretic properties of the kernel of  a locally nilpotent derivation on $k[X_1, \dots, X_4]$}
\author{S. M. Bhatwadekar{\footnote{{\small {\it 
Bhaskaracharya Pratishthana, 56/14 Erandavane, Damle Path,}}
{\small {\it Off Law College Road, Pune 411 004, India.}}
{\small {\it e-mail: smbhatwadekar@gmail.com}}}}, 
Neena Gupta{\footnote{{\small{\it Statistics and Mathematics  Unit, Indian Statistical Institute,}}
{\small{\it 203 B.T. Road, Kolkata 700 108, India.}}
{\small{\it e-mail: neenag@isical.ac.in}}}}
and Swapnil A. Lokhande{\footnote{{\small {\it 
Statistics and Mathematics  Unit, Indian Statistical Institute,}}
{\small{\it 203 B.T. Road, Kolkata 700 108, India.}}
{\small {\it e-mail: swaplokhande@gmail.com}}}}}
\begin{document}
\date{}
\maketitle
\begin{abstract}
Let $k$ be an algebraically closed field of characteristic zero,  $D$ a locally nilpotent derivation on
the polynomial ring $k[X_1, X_2,X_3,X_4]$ and $A$ the kernel of $D$.
A question of M. Miyanishi asks whether projective modules over $A$ are necessarily free.  
Implicit is a subquestion: whether the Grothendieck group $K_0(A)$ is trivial.
 
In this paper we shall demonstrate an explicit $k[X_1]$-linear fixed point free locally nilpotent
derivation $D$ of $k[X_1,X_2,X_3, X_4]$ whose kernel $A$ has an isolated singularity and
whose Grothendieck group $K_0(A)$ is not finitely generated; in particular, 
there exists an infinite family of pairwise non-isomorphic projective modules over the kernel $A$.
 
We shall also show that, although Miyanishi's original question does not have an affirmative
answer in general, suitably modified versions of the question do have affirmative answers
when $D$ annihilates a variable. For instance, we shall establish that in this case the groups $G_0(A)$ and $G_1(A)$
are indeed trivial.  Further, we shall see that if the above kernel $A$ is a regular ring, then
$A$ is actually a polynomial ring over $k$; in particular, by the Quillen-Suslin theorem, Miyanishi's
question has an affirmative answer.
 
Our construction involves rings defined by the relation $u^mv=F(z,t)$, where $F(Z,T)$ is an irreducible
polynomial in $k[Z,T]$. We shall show that a necessary and sufficient condition for such a ring
to be the kernel of a $k[X_1]$-linear locally nilpotent derivation $D$ of a polynomial ring $k[X_1,...,X_4]$
is that $F$ defines a polynomial curve.

\noindent
{\small {{\bf Keywords.} Locally Nilpotent Derivation, 
Polynomial Ring, Projective Module,\\ Grothendieck Group, Picard Group.}}\\
{\small {{\bf AMS Subject classifications (2010)}. Primary: 13N15; Secondary: 13A50, 13C10, 13D15}}.
\end{abstract}

\section{Introduction}\label{intro}
Let $k$ be an algebraically closed field of characteristic zero and 
$R$ an integral domain containing $k$. 
A derivation $D:R \ra R$ is called {\bf locally nilpotent} if, for each $x\in R$, 
there exists an integer $s>0$ such that $D^s(x)=0$. The kernel of a derivation $D$
is the subring ker$(D)=\{b\in R\ |\ D(b)=0\}$ of $R$. 
For any subring $S$ of $R$, the set of all $S$-linear locally nilpotent derivations on $R$ 
is denoted by $\lnd_S(R)$. 
Note that, for any unit $\lambda \in R$, $D(\lambda)=0$ for any 
locally nilpotent derivation $D$ on $R$. In particular, 
$k \hookrightarrow$ ker$(D)$  and $D$ is $k$-linear for any locally nilpotent derivation $D$ on $R$.
A derivation $D$ is said to be {\bf fixed point free} if, the ideal 
generated by $DR$ is the unit ideal.
The concept of a locally nilpotent derivation and its kernel,
is an algebraic formulation of a $\Ga_a$-action on an 
affine variety and its ring of invariants.
Results on the kernels of locally nilpotent derivations 
have been crucial to solutions of certain central problems of affine algebraic geometry.

Now let $B=k[X_1,\ldots,X_n]$, a polynomial ring over $k$, $D$  a locally nilpotent
derivation of $B$ and $A={\rm ker}(D)$. As commented by M. Miyanishi
(\cite[Section 1.2]{M}),  $A$ inherits some of the properties (like factoriality)
of the polynomial ring $B$ and it is expected that $A$ enjoys, at least to some extent,
several other properties of $B$. Miyanishi listed a few properties of $B$ and asked
if $A$ satisfies them.  The following question of Miyanishi, asked in this spirit,
has also been highlighted by G. Freudenburg in his monograph on locally nilpotent derivations
(\cite[Section 11.17]{F}):
 
\smallskip

\noindent
{\bf Question}. { \it If $D\in \lnd_k(k[X_1,\ldots,X_n])$, then are all projective 
modules over the kernel of $D$ necessarily free?}
 
\smallskip
 
All the investigations in this paper have been directly or indirectly inspired by this question. 
Recall that, when $n\leq 3$,  it is known that the kernel of $D$ is a polynomial ring over $k$ (cf. \cite{R} and \cite{Mi})
and hence the above question has an affirmative answer by the Quillen-Suslin Theorem (cf. \cite{Q}, \cite{Su}).
However, when $n \ge 4$, the kernel of $D$ need not be a polynomial ring. 
In fact, when $n \ge 5$, there exist locally nilpotent derivations (in fact, triangular derivations)
for which the kernel is not even finitely generated (cf. \cite[Chapter 7]{F}).

The case $n=4$ is of special interest. In this case, it is not known whether 
ker$(D)$ is necessarily finitely generated in general. 
However, under the additional hypothesis that $D$ annihilates a variable of $k[X_1, X_2, X_3, X_4]$,
it is known that ker$(D)$ is indeed finitely generated (\cite{BD}).     
But even under this additional hypothesis, the  kernel of $D$ is not necessarily a
polynomial ring; in fact, D. Daigle and G. Freudenburg have shown (\cite{DF}) that, given an integer $r \ge 3$, there exists an element of 
$\lnd_{k[X_1]} (k[X_1, X_2, X_3, X_4])$ whose kernel cannot be generated by 
fewer than $r$ elements. 
 
In view of the above results, we investigate the question of Miyanishi (and closely related questions)
for the case $n=4$ and under the additional hypothesis that
$D$ annihilates a variable of $B=k[X_1,X_2,X_3,X_4]$, i.e., we investigate locally nilpotent derivations of $B$
``of rank less than or equal to 3'' (cf. \cite[p. 50]{F}). For convenience, we shall usually assume that $D(X_1)=0$, i.e.,
$D$ is $k[X_1]$-linear.

Our first main theorem addressing Miyanishi's question establishes that if the kernel $A$ of an element 
$D \in \lnd_{k[X_1]}(k[X_1, X_2,X_3,X_4])$ is a regular ring then $A$ is in fact  a polynomial ring. 
We prove (Theorem \ref{t5}):

\medskip

\noindent
{\bf Theorem I.} 
{\it Let $k$ be a field of characteristic zero, 
$D$ a locally nilpotent derivation of $k[X_1, X_2, X_3, X_4]$ such that $D(X_1)=0$ and
$A=$ker$(D)$. If $A$ is regular, then $A$ is a polynomial ring over $k[X_1]$. 
In particular, all projective modules over $A$ are free.
}

\smallskip

Theorem I gives a partial answer to a question of Freudenburg (Remark \ref{fr}(ii)).
We shall also describe (Theorem \ref{tr1}) the structure of the ring $A$  
(of Theorem I) when $A$ is not regular but
has only isolated singularities (i.e., $A_\m$ is regular for all but finitely many maximal ideals $\m$ of $A$). 
In contrast to the result of Daigle-Freudenburg (\cite{DF}) mentioned earlier, we shall see that in this case $A$ 
is generated by four elements.

Our next theorem in the context of Miyanishi's question highlights an interesting  $K$-theoretic property of 
$k[X_1, X_2, X_3, X_4]$ that is  shared by the kernel $A$ of any $k[X_1]$-linear locally nilpotent derivation 
of $k[X_1, X_2, X_3, X_4]$. We shall show that the groups $G_0(A)$ and $G_1(A)$ are indeed trivial. 
More precisely, we shall prove (Theorem \ref{th7}):

\medskip

\noindent
{\bf Theorem II.} 
{\it Let $k$ be an algebraically closed field of characteristic zero and 
$D$ be a $k[X_1]$-linear locally nilpotent derivation of $k[X_1, X_2, X_3, X_4]$. 
Let $A= $ker$(D)$. Then the canonical maps $G_j(k) \to G_j(A)$ are isomorphisms for $j= 0, 1$.
In particular, $G_0(A)= \BZ$ and $G_1(A) = k^*$. 
}
 
\smallskip
As a step, we shall first establish (Proposition \ref{propa}) certain conditions for a three-dimensional 
affine domain $A$ with generic fibre ${\mathbb A}^2$ to have trivial $G_0(A)$ and $G_1(A)$.

Finally, our following theorem (Theorem \ref{MT}) will show
that the answer to Miyanishi's question is not always affirmative in general.
 
\medskip

\noindent
{\bf Theorem III.} 
{\it Let $k$ be an algebraically closed field of characteristic zero.
Let $F(Z, T)$ be an irreducible polynomial of $k[Z, T]$, $C= k[Z, T]/(F)$ and 
$$
A= k[U, V, Z, T]/(U^mV- F(Z, T)), \text{~~where~~} m \ge 1.
$$
Then the following statements hold:
\begin{enumerate}
\item [\rm (i)] If $C$ is not a regular ring 
then $K_0(A)$ is not finitely generated; in particular,  there exists an infinite 
family of non-isomorphic projective $A$-modules of rank two
which are not even stably isomorphic.
\item [\rm (ii)] $A$ is isomorphic (as a $k$-algebra) to ker$(D)$ for some $D \in \lnd_{k[X_1]}(k[X_1, X_2, X_3, X_4])$ if and only if 
$C$ is a $k$-subalgebra of a polynomial ring $k[W]$. 
\end{enumerate}
}

In fact, we shall first show (Proposition \ref{pm}) that, 
under the notation of Theorem~III, the following statements are equivalent:
\begin{enumerate}
\item [\rm(a)] {\it $C$ is the coordinate ring of a non-singular rational curve}. 
\item [\rm(b)] {\it $K_0(A)= {\mathbb Z} $}.
\item [\rm(c)] {\it $K_0(A)$ is finitely generated.}
\end{enumerate}

Note that taking $C$ to be a $k$-subalgebra of a polynomial ring $k[W]$, 
a derivation $D$ obtained from (ii) of Theorem III can be easily extended 
to a locally nilpotent derivation of $k[X_1, \dots, X_n]$ ($n \ge 5$) 
having kernel $A[X_5, \dots, X_n]$. Therefore, if further $C$ is a non-regular ring, 
then these extensions of $D$ give counterexamples to Miyanishi's question 
for any $n \ge 4$ (cf. Remark \ref{r11}(c)). 
Example \ref{ex} gives an explicit fixed point free locally 
nilpotent derivation $D$ on $k[X_1, X_2, X_3, X_4]$ 
such that not all projective modules over ker$(D)$ are free.
 
We now give a layout of the paper. Section \ref{prelim} is on preliminaries
comprising mainly of concepts and results on $K$-theory.  
In Section \ref{regular}, we shall establish Theorem~I and Theorem \ref{tr1}. 
This section is independent of the $K$-theoretic 
results of Section \ref{prelim}. 
In Section \ref{g0g1},  we prove a few general results
including Proposition \ref{propa} and deduce Theorem II. 
In Section \ref{Main}, we shall first prove (Proposition \ref{pm})
that the Grothendieck group $K_0$ of a ring of the form
$A:= k[U, V, Z, T]/(U^mV-F(Z, T))$, 
where $m \ge 1$ and $F(Z, T)$ defines a singular polynomial curve, is not finitely generated. 
Next we shall construct a locally nilpotent derivation of $k[X_1, X_2, X_3, X_4]$
whose kernel is isomorphic to $A$ and thereby establish Theorem III.

\section{Preliminaries}\label{prelim}
Throughout the paper, we shall use the notation $R^{[n]}$ for a polynomial ring in 
$n$ variables over a commutative ring $R$. Thus, $E= R^{[n]}$ will mean that $E= R[t_1, \ldots, t_n]$, where 
$t_1, \ldots, t_n$ are elements of $E$ algebraically independent over $R$.
Unless otherwise stated, capital letters $X,Y,Z,T, U,V, W$ or $X_1, X_2, \dots, X_n$ will denote
indeterminates; thus a notation like $k[X_1, \dots, X_n]$ or $k[U,V,Z,T]$ will denote a polynomial ring
with the letters as variables. 

For any ring $R$, $R^*$ denotes the group of units of $R$.

Let $k$ be a field and $E$ an affine domain of dimension one over $k$. 
For brevity, we shall call the ring $E$ an {\it affine rational curve}
if its field of rational functions is isomorphic to $k(W)$ and 
a {\it polynomial curve} if $E\hookrightarrow k[W]$ 
for some $W$ transcendental over $k$. 
Note that if $E \hookrightarrow k^{[n]}$ then $E \hookrightarrow k^{[1]}$ (\cite[Lemma B]{E}). 

A smooth affine rational curve over an algebraically closed field $k$ is isomorphic
to $k[X, \frac{1}{f(X)}]$ for some polynomial $f(X) \in k[X]$ (\cite[Chapter 1, Exercise 6.1]{Ha}). 
A smooth polynomial curve (over any field $k$) is isomorphic to $k^{[1]}$ (cf. Lemma \ref{thE}). 

We now quote a few results which will be used in our proofs. 
We first state a theorem on ${\mathbb A}^2$-fibrations due to A. Sathaye (\cite[Theorem 1]{S}). 

\begin{thm}\label{Sa}
Let $(R,t)$ be a DVR containing ${\mathbb Q}$, with field of fractions $K$ 
and residue field $k$. Let $A$ be an integral domain containing $R$ such that 
$A[1/t]= K^{[2]}$ and $A/tA= k^{[2]}$. Then $A= R^{[2]}$. 
\end{thm}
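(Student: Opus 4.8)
The plan is to produce an $R$-coordinate system on $A$ by lifting a coordinate system of the special fibre and then correcting it so that it simultaneously restricts to a coordinate system of the generic fibre; the conclusion then follows from a faithfully flat comparison of the two fibres. Before any of this, I would record the structural properties forced by the hypotheses. Since $A$ is an integral domain containing the DVR $R$ and $t\neq 0$, the ring $A$ is $t$-torsion free, hence flat, and therefore faithfully flat over $R$. Both fibres $A[1/t]=K^{[2]}$ and $A/tA=k^{[2]}$ are regular of dimension two, so by flatness $A$ is a regular (in particular normal) Noetherian domain of dimension three. Because $A[1/t]$ is a UFD and $t$ is a prime element of $A$ with $A/tA$ an integral domain, Nagata's criterion shows that $A$ itself is a UFD, so $\mathrm{Pic}(A)=0$; and comparing unit groups across the two fibres gives $A^{*}=R^{*}$. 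One also needs $A$ to be finitely generated over $R$, which is not assumed; this can be extracted from the finite generation of the two fibres together with flatness and $t$-adic separatedness. These normality facts are precisely what will be needed to invoke a Bass--Connell--Wright type triviality criterion for $\mathbb{A}^1$-fibrations.

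The heart of the argument is to find an element $u\in A$ that is ``good'' on both fibres: namely (i) its image $\bar u$ in $A/tA$ is a coordinate, so that $A/tA=k[\bar u]^{[1]}$, and (ii) $u$ is a coordinate of the generic fibre, so that $A[1/t]=K[u]^{[1]}$. Lifting a coordinate of $k^{[2]}$ to an element of $A$ is immediate; the genuine difficulty is to correct this lift so that condition (ii) also holds. Here I would bring in the geometry of the affine plane over a field: the Abhyankar--Moh--Suzuki theorem (every embedded line in $\mathbb{A}^2$ is rectifiable) together with the Jung--van der Kulk description of $\mathrm{Aut}(K^{[2]})$ allows one to replace $u$ by its image under a suitable automorphism of the generic fibre, while keeping track of the powers of $t$ introduced; after clearing denominators the corrected element should still lie in $A$, still reduce to a coordinate modulo $t$, and now generate the generic fibre over one further variable.

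\emph{This compatible choice of coordinate is the step I expect to be the main obstacle.} The danger is that the rectifying automorphism of the generic fibre degenerates badly modulo $t$, so that the corrected lift fails to remain a coordinate of the special fibre (or leaves $A$ altogether). Controlling this specialization from the generic to the special fibre---ruling out such ``jumping'' of the coordinate---is exactly where the full strength of two-dimensional affine algebraic geometry is used, and is the technically deepest part of Sathaye's argument (\cite{S}).

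Granting such a $u$, the endgame proceeds by an iterated fibration. I would exhibit $A$ as an $\mathbb{A}^1$-fibration over a two-dimensional $R$-subalgebra $B$ whose fibres are $B[1/t]=K^{[1]}$ and $B/tB=k^{[1]}$; the vanishing of $\mathrm{Pic}(A)$ and the equality $A^{*}=R^{*}$ established above then force $A=B^{[1]}$ through the $\mathbb{A}^1$-fibration triviality criterion. Finally, $B$ is a flat finitely generated $R$-domain with $B[1/t]=K^{[1]}$ and $B/tB=k^{[1]}$, so the one-variable analogue of the theorem (the Russell--Sathaye criterion for $\mathbb{A}^1$-fibrations over a DVR) yields $B=R^{[1]}$, whence $A=R^{[2]}$. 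The underlying mechanism throughout is that a map of faithfully flat $R$-algebras inducing isomorphisms on both the generic and special fibres is itself an isomorphism, which is what converts fibrewise coordinate information into the global statement $A=R^{[2]}$.
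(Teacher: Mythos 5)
First, note that the paper itself offers no proof of this statement: it is quoted verbatim from Sathaye (\cite[Theorem 1]{S}) as a known input to the rest of the arguments, so the only thing your attempt can be measured against is Sathaye's original work. Read as a standalone proof, your proposal has a genuine gap at exactly the point you yourself flag: the existence of an element $u\in A$ whose image is a coordinate of the special fibre $A/tA$ and which simultaneously (after correction by an automorphism of $A[1/t]=K^{[2]}$) becomes a coordinate of the generic fibre. This ``residual coordinate'' is not a quotable lemma around which the rest assembles --- it \emph{is} the theorem. Naming the right tools (Abhyankar--Moh--Suzuki, Jung--van der Kulk) does not control how the rectifying automorphism of $K^{[2]}$ degenerates modulo $t$, and you concede that this control is ``the technically deepest part of Sathaye's argument'' and defer it to \cite{S}; that is, the central step of the proof is cited rather than proved. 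The endgame you describe (exhibiting $A$ as an ${\mathbb A}^1$-fibration over a subring $B$ with $B=R^{[1]}$, then descending) is a plausible architecture, but it hangs entirely from this missing hook.

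There is a second, independent gap in your preliminary reductions. You assert that finite generation of $A$ over $R$ ``can be extracted from the finite generation of the two fibres together with flatness and $t$-adic separatedness,'' but you give no argument, and the soft version of this claim is false: the subring $A=R+XK[X]$ of $K[X]$ is a flat $R$-algebra with $A[1/t]=K[X]$ and $A/tA=k$, both fibres finitely generated, yet $A$ is not finitely generated over $R$ (the coefficient of $X$ in any element of a finitely generated subalgebra has bounded denominator, while $A$ contains $X/t^{n}$ for all $n$). That example is not $t$-adically separated, but you offer no mechanism by which separatedness would restore finite generation; in Sathaye's actual development, finite generation emerges from the fibration analysis itself and is not a formal preliminary. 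This also upsets the logical order of your opening paragraph: without knowing $A$ is Noetherian you cannot invoke ``flat with regular fibres implies regular,'' nor Nagata's criterion (\cite[Theorem 20.2]{Mat}, stated for Noetherian domains) to conclude that $A$ is a UFD, so the structural platform your endgame relies on ($\operatorname{Pic}(A)=0$, $A^{*}=R^{*}$, regularity) is not yet available at the moment you use it.
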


The next theorem is a special case of the local-global theorem of Bass-Connell-Wright (\cite[Theorem 4.4]{BCW}).

\begin{thm}\label{bcw}
Let $R$ be a PID and $B$ a finitely generated $R$-algebra. Suppose that for all 
maximal ideals $\m$ of $R$, $B_\m= R_\m^{[n]}$ for some $n \ge 1$. 
Then $A =R^{[n]}$.
\end{thm}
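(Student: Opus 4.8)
The plan is to obtain the statement as a special case of the general Bass–Connell–Wright structure theorem, and then to exploit the hypothesis that $R$ is a PID. Since $R$ is a PID it is Noetherian, so the finitely generated $R$-algebra $B$ is automatically finitely presented over $R$; this is precisely the finiteness hypothesis under which the general local-global theorem is available. Thus the setup is correctly positioned to invoke that theorem.

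The first, and conceptually heaviest, step is to pass from the purely local data ``$B_\m = R_\m^{[n]}$ for every maximal ideal $\m$ of $R$'' to a global structural description of $B$. The general Bass–Connell–Wright theorem asserts that such a locally polynomial algebra is in fact the symmetric algebra $S_R(P)$ of a finitely generated projective $R$-module $P$: the abstract local polynomial isomorphisms are patched, and $P$ is recovered as a projective $R$-module whose symmetric algebra reconstructs $B$. This descent — manufacturing a single global projective module from isomorphisms given only locally at each $\m$ — is the genuine obstacle, since the local isomorphisms need not be compatible on the nose and must be controlled via the relative cotangent data; however, we are entitled to assume exactly this general theorem, so for the present special case the step is granted.

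Next I would identify the rank of $P$. Localizing $S_R(P)$ at a maximal ideal $\m$ gives $S_{R_\m}(P_\m)$, which by hypothesis is $R_\m^{[n]}$; comparing symmetric algebras forces $P_\m$ to be free of rank $n$. As $R$ is a domain, $\Spec R$ is irreducible, so the rank of $P$ is constant and equal to $n$. Finally I would invoke that $R$ is a PID: a finitely generated projective module over a PID is torsion-free and finitely generated, hence free, so $P \cong R^n$. Therefore $B \cong S_R(R^n) = R^{[n]}$, as claimed. The only ingredient beyond the quoted general theorem is this last freeness statement, which is classical, so once the descent step is granted the remainder is routine.
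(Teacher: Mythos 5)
Your proposal is correct and takes essentially the same route as the paper: the paper states this result without proof, as a special case of the Bass--Connell--Wright theorem (\cite[Theorem 4.4]{BCW}), and your argument supplies precisely the routine deduction making it a special case --- finite presentation is automatic over the Noetherian ring $R$, the rank of the projective module $P$ with $B \cong S_R(P)$ is constant and equal to $n$ since $\Spec R$ is connected, and a finitely generated projective module over a PID is free, giving $B \cong S_R(R^n) = R^{[n]}$. No gap remains once the general theorem is granted, which is exactly what the paper itself does.
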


The following theorem is due to Miyanishi (\cite{Mi}).

\begin{thm}\label{miya}
Let $k$ be a field of characteristic zero. 
Let $B = k[X,Y,Z]$, $0 \neq D\in \lnd_k(B)$ and  $A=$ ker$(D)$.
Then $A= k^{[2]}$. 
\end{thm}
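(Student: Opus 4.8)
The plan is to identify $X = \Spec A$ as a smooth affine surface with very restrictive invariants and then to invoke the Miyanishi--Sugie--Fujita characterisation of the affine plane. First I would record the standard consequences of $A$ being the kernel of a nonzero locally nilpotent derivation of a polynomial ring. The subring $A$ is \emph{factorially closed} in $B$ (if $a, b \in B \setminus \{0\}$ and $ab \in A$, then $a, b \in A$), whence $A$ is a UFD, is integrally closed, satisfies $A^* = B^* = k^*$, and has $\td_k A = \td_k B - 1 = 2$. Factorial closedness also yields $A = \operatorname{Frac}(A) \cap B$; since $\td_k \operatorname{Frac}(A) = 2$, Zariski's finiteness theorem (the affirmative solution of Hilbert's fourteenth problem in transcendence degree $\le 2$) shows that $A$ is a finitely generated $k$-algebra. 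Thus $X = \Spec A$ is a normal affine factorial surface with $\mathcal{O}(X)^* = k^*$.

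Next I would pass to the algebraic closure and extract a local slice. Since $\bar k$ is flat over $k$, $A \otimes_k \bar k = \ker(D \otimes \mathrm{id})$ is again the kernel of a locally nilpotent derivation of $\bar k^{[3]}$, so I may assume $k = \bar k$ and recover the general field at the end by descent. Choosing $s \in B$ with $Ds =: f \in A \setminus \{0\}$ and $D^2 s = 0$, one gets $B_f = A_f[s] = A_f^{[1]}$, hence $\operatorname{Frac}(B) = \operatorname{Frac}(A)(s)$ is purely transcendental of degree one over $\operatorname{Frac}(A)$. As $\operatorname{Frac}(B) = k(X, Y, Z)$ is rational, $\operatorname{Frac}(A)$ is a field of transcendence degree two that embeds in $k^{(3)}$, hence is unirational; Castelnuovo's criterion then shows $\operatorname{Frac}(A)$ is rational over $k$, so $X$ is a normal affine rational factorial surface.

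The heart of the argument, and the step I expect to be the main obstacle, is to prove that $X$ is \emph{smooth} and that its logarithmic Kodaira dimension satisfies $\bar\kappa(X) = -\infty$. This is where the theory of open algebraic surfaces is needed: one analyses the $\Ga_a$-quotient morphism $\mathbb{A}^3 = \Spec B \to X$, whose general fibres are the one-dimensional orbits $\cong \mathbb{A}^1$, in order to produce an $\mathbb{A}^1$-ruling on a smooth completion and to rule out the (necessarily isolated) singularities of the normal surface $X$; the resulting $\mathbb{A}^1$-fibration forces $\bar\kappa(X) = -\infty$. With smoothness, factoriality, $\mathcal{O}(X)^* = k^*$ and $\bar\kappa(X) = -\infty$ all in hand, the Miyanishi--Sugie--Fujita characterisation of the affine plane yields $X \cong \mathbb{A}^2_{\bar k}$, that is, $A \otimes_k \bar k \cong \bar k^{[2]}$.

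Finally, to return to an arbitrary field $k$ of characteristic zero, I would observe that the isomorphism $A \otimes_k \bar k \cong \bar k^{[2]}$ exhibits $A$ as a $k$-form of the affine plane; by Kambayashi's theorem that $\mathbb{A}^2$ has no nontrivial forms over a field, it follows that $A \cong k^{[2]}$. The genuinely deep input is the surface-theoretic step establishing smoothness together with $\bar\kappa(X) = -\infty$; the remaining ingredients are either elementary properties of kernels of locally nilpotent derivations or citations of Zariski finiteness, Castelnuovo rationality, the $\mathbb{A}^2$-characterisation, and the absence of nontrivial forms of $\mathbb{A}^2$.
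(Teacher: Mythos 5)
Your proposal cannot be compared against an internal proof, because the paper does not prove Theorem \ref{miya} at all: it is quoted in the preliminaries as a known result of Miyanishi, with a citation to \cite{Mi}, and is then used as a black box (e.g.\ in Lemma \ref{BD} and Proposition \ref{bdd2}). So the relevant comparison is with the published proofs, whose overall architecture your outline does follow. Your elementary steps are all correct: factorial closedness of the kernel, hence UFD, normality, $A^*=k^*$, $\operatorname{tr.deg}_kA=2$, and $A=\operatorname{Frac}(A)\cap B$; finite generation by Zariski's theorem in transcendence degree $\le 2$; the slice argument giving $B_f=A_f[s]$ and hence rationality of $\operatorname{Frac}(A)$ via Castelnuovo; and the final descent from $\overline{k}$ to $k$ by Kambayashi's theorem on forms of $\mathbb{A}^2$.

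The genuine gap is that the step you yourself flag as ``the heart of the argument'' --- smoothness of $X=\operatorname{Spec}A$ and $\overline{\kappa}(X)=-\infty$ --- is not proved but only gestured at, and the gesture as written does not work. The general fibres of the quotient morphism $\pi:\mathbb{A}^3\to X$ being $\mathbb{A}^1$'s gives an $\mathbb{A}^1$-fibration of the \emph{threefold} over $X$; what the Miyanishi--Sugie--Fujita characterisation needs is an $\mathbb{A}^1$-fibration of $X$ \emph{over a curve} (equivalently a cylinderlike open subset of $X$), and passing from the first to the second is precisely the hard content of Miyanishi's proof. (Known routes: restrict $\pi$ to a general plane $H\cong\mathbb{A}^2\subset\mathbb{A}^3$ to get a dominant, generically finite map $\mathbb{A}^2\to X$ and invoke Iitaka's monotonicity of $\overline{\kappa}$ under such maps, or show that a general fibre of $X\to\mathbb{A}^1$ is the Rentschler quotient of a plane by an induced $\mathbb{G}_a$-action; either way the singular points of $X$ still require a separate argument.) Note also that this step cannot be recovered from the invariants you have already collected: the ring $k[x,y,z]/(x^2+y^3+z^5)$ (the $E_8$ singularity) is a two-dimensional normal affine rational UFD with units $k^*$, yet it is not $k^{[2]}$; so smoothness and $\overline{\kappa}=-\infty$ must be extracted from the derivation itself, and a proof that omits this extraction has omitted essentially all of the theorem.
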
 

The following well-known result is taken from \cite[Theorem 1]{PE}.

\begin{thm}\label{thE}
Let $k$ be a field and $A$ a one-dimensional normal $k$-subalgebra of 
$k[X_1, \dots, X_n]$ for some $n \ge 1$. Then $A= k^{[1]}$. 
In particular, if $C$ is a polynomial curve then its integral closure
is $k^{[1]}$. 
\end{thm}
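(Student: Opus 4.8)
The plan is to reduce to the case $n=1$ and then analyze $A$ as the coordinate ring of a smooth affine rational curve using the theory of places of $k(W)$. First, by \cite[Lemma B]{E} (quoted above) the inclusion $A \hookrightarrow k[X_1,\dots,X_n]$ yields an inclusion $A \hookrightarrow k[W]=k^{[1]}$, so I may assume $A$ is a one-dimensional normal $k$-subalgebra of $k[W]$. I would first record that $A$ is finitely generated: if $a,b\in A\setminus\{0\}$ then $\deg_W(ab)=\deg_W a+\deg_W b$, so the set of $W$-degrees of nonzero elements of $A$ is a subsemigroup $\Gamma$ of $(\mathbb{N},+)$; since every subsemigroup of $\mathbb{N}$ is finitely generated, choosing $a_1,\dots,a_r\in A$ realizing semigroup generators of $\Gamma$ and reducing leading terms by induction on degree shows $A=k[a_1,\dots,a_r]$. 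Thus $A$ is a one-dimensional normal affine $k$-domain, hence a Dedekind domain, and $X:=\Spec A$ is a smooth affine curve. Since $\operatorname{Frac}(A)\subseteq k(W)$ has transcendence degree $\dim A=1$ over $k$, Lüroth's theorem gives $\operatorname{Frac}(A)=k(f)$ for some $f\in k(W)$; in particular the function field of $A$ is rational, so the smooth projective model of $A$ is $\bar X=\mathbb{P}^1$ and $X=\mathbb{P}^1\setminus S$ for a finite nonempty set $S$ of closed points, with $A=\mathcal{O}(\mathbb{P}^1\setminus S)$ the ring of rational functions having poles only along $S$.

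The heart of the argument is to show $|S|=1$. Let $v_\infty$ denote the valuation of $k(W)$ at infinity, so $v_\infty(g)=-\deg_W g$ and $k[W]$ is exactly the set of elements of $k(W)$ with nonnegative value at every finite place of $k(W)$. Because $A\subseteq k[W]$, every $g\in A$, viewed as a rational function on $\mathbb{P}^1_W$, has a pole only at the infinite place of $k(W)$. Writing $v_0$ for the place of $k(f)=\operatorname{Frac}(A)$ lying below $v_\infty$, it follows by functoriality of valuations that every element of $A$ has a pole (as a function on $\bar X=\mathbb{P}^1$) only at $v_0$; hence $v_0\in S$. If $S$ contained a second point $P\neq v_0$, then, the function field being rational, there is a rational function with a pole precisely at $P$ and regular elsewhere (for a finite place take the reciprocal of the corresponding irreducible polynomial in $f$; for the infinite place take $f$ itself). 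This function lies in $\mathcal{O}(\mathbb{P}^1\setminus S)=A$ yet has a pole at $P\neq v_0$, contradicting the previous sentence. Therefore $S=\{v_0\}$.

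Finally, the residue field of $v_0$ embeds in the residue field $k$ of $v_\infty$, so $v_0$ is a $k$-rational point of $\mathbb{P}^1$; removing it gives $X=\mathbb{P}^1\setminus\{v_0\}\cong\mathbb{A}^1$, whence $A=\mathcal{O}(X)=k^{[1]}$, as claimed. The main obstacle I anticipate is the place-theoretic step of the previous paragraph, namely converting the inclusion $A\subseteq k[W]$ into the statement that every function of $A$ has its poles concentrated at the single place below infinity; the finite-generation remark and the appeal to Lüroth are routine by comparison. For the concluding assertion, if $C$ is a polynomial curve then $C\hookrightarrow k[W]$, and since $k[W]$ is integrally closed in $k(W)$, the integral closure of $C$ in $\operatorname{Frac}(C)$ is contained in $k[W]$ and is one-dimensional and normal; applying the main statement yields that this integral closure equals $k^{[1]}$.
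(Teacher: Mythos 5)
There is no internal proof to compare you against: the paper does not prove Theorem \ref{thE} at all, but quotes it from Eakin \cite[Theorem 1]{PE} and uses it as a black box. Your proposal is therefore a genuine reconstruction, and in substance it is correct. Once $A$ sits inside $k[W]$, your degree-semigroup argument does give finite generation; L\"uroth's theorem (valid over every field) gives $\operatorname{Frac}(A)=k(f)$; a one-dimensional normal affine $k$-domain is an open subscheme of the regular projective model of its function field, here $\mathbb{P}^1$, so $A=\mathcal{O}(\mathbb{P}^1\setminus S)$ with $S$ finite and nonempty; and your place-theoretic step is the right mechanism: any place $P\neq v_0$ of $k(f)$ extends to a place of $k(W)$ necessarily different from $v_\infty$, hence to a finite place, at which every element of $A\subseteq k[W]$ is regular, so all poles of elements of $A$ are concentrated at $v_0$. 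The explicit functions $1/q(f)$ (for a finite place) and $f$ (for the infinite place) then force $S=\{v_0\}$, and the residue-field inclusion $\kappa(v_0)\hookrightarrow\kappa(v_\infty)=k$ makes $v_0$ a $k$-rational point, whence $A\cong\mathcal{O}(\mathbb{P}^1\setminus\{v_0\})=k^{[1]}$. The concluding reduction for the ``in particular'' statement (integral closure of a polynomial curve lands in $k[W]$ because $k[W]$ is integrally closed) is also correct, and there the embedding step is not even needed.

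One caveat, more a citation issue than a mathematical one, but worth fixing: the embedding fact you invoke at the outset (``if $E\hookrightarrow k^{[n]}$ then $E\hookrightarrow k^{[1]}$'') is stated in the paper's Preliminaries only for \emph{affine} one-dimensional domains, whereas at that stage your $A$ is not yet known to be finitely generated --- your finite-generation argument comes only after the embedding, so as written there is a small circularity. The repair is either to invoke Eakin's lemma in its original form in \cite{PE}, where finite generation is not a hypothesis (it is part of what Eakin proves), or to note that in every application made in this paper the ring in question is affine to begin with (e.g.\ the normalization of an affine polynomial curve), so the affine case suffices. Relatedly, for a non-affine $A$ one should also justify that Krull dimension one forces transcendence degree one before the embedding and L\"uroth can be used; once inside $k[W]$ this is automatic, but for a general subalgebra of $k[X_1,\dots,X_n]$ it is a true but not completely trivial point that your write-up passes over silently.
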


For convenience, we state below an easy lemma.  

\begin{lem}\label{lemlast}
Let $A\subseteq B$ be domains and $0\neq f\in A$. If $A_f=B_f$ and 
the induced map $A/fA\rightarrow B/fB$ is injective then $A=B$. 
\end{lem}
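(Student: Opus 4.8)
The plan is to prove the nontrivial inclusion $B\subseteq A$ (the reverse inclusion being given by hypothesis). The strategy is to exploit $A_f=B_f$ to write an arbitrary element of $B$ in the form $a/f^n$ with $a\in A$, and then to remove the powers of $f$ one at a time, invoking the injectivity hypothesis at each stage.

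First I would reformulate the injectivity condition ring-theoretically. The induced map $A/fA\ra B/fB$ sends $a+fA$ to $a+fB$, and since $fA\subseteq fB\cap A$ always holds, its injectivity is equivalent to the equality $fB\cap A=fA$. In words: whenever an element of $A$ is divisible by $f$ inside $B$, it is already divisible by $f$ inside $A$. This is the form in which the hypothesis will actually be used.

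Next, given $b\in B$, the equality $A_f=B_f$ furnishes an expression $b=a/f^{\,n}$ with $a\in A$ and $n\ge 0$, equivalently $f^{\,n}b=a\in A$. I would then show $b\in A$ by induction on $n$. The base case $n=0$ is immediate. For $n\ge 1$, write $f^{\,n}b=f\cdot(f^{\,n-1}b)$ with $f^{\,n-1}b\in B$, so that $f^{\,n}b\in fB\cap A=fA$ by the reformulated injectivity; hence $f^{\,n}b=fa'$ for some $a'\in A$. Since $B$ is a domain and $f\neq 0$, cancelling $f$ gives $f^{\,n-1}b=a'\in A$, which lowers the exponent by one and completes the induction.

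This establishes that every $b\in B$ lies in $A$, so $A=B$. There is no serious obstacle here; the only points requiring care are the correct translation of the injectivity hypothesis into the containment $fB\cap A=fA$ and the use of the domain hypothesis to justify cancelling $f$ in the inductive step.
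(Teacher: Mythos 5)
Your proof is correct; the paper states this lemma without proof (labelling it ``an easy lemma''), and your argument --- reformulating injectivity as $fB\cap A=fA$, then clearing denominators from $A_f=B_f$ and descending by induction on the power of $f$ using cancellation in the domain $B$ --- is exactly the standard argument the authors leave to the reader.
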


In the rest of this section we record a few definitions and state some 
well-known results from algebraic $K$-theory. 
Unexplained terms and other details can be found in  \cite[Chapter 4]{B2}. 

Let $\mathcal{R}$ be an Abelian category.  
Recall that a subcategory $\mathcal{E}$ of $\mathcal{R}$ is called {\it admissible} 
if it is a full additive subcategory with only a set of isomorphism classes of objects, and such that,
if $0\to M'\to M \to M'' \to 0$ is an exact sequence in $\mathcal{R}$ 
then $M, M'' \in \mathcal{E}$ implies $M' \in \mathcal{E}$.  

Let $R$ be a commutative Noetherian ring. Let $\mathcal{M}(R)$ denote 
the category of finitely generated $R$-modules. 
 Then $\mathcal{M}(R)$ is an Abelian category. 
Let $\mathcal{N}$ be an admissible subcategory of $\mathcal{M}(R)$. 

The {\it Grothendieck group} $K_0(\mathcal{N})$ of the category $\mathcal{N}$
is an Abelian group presented by generators $[M]$, 
where $[M]$ denotes the isomorphism class of objects of $\mathcal{N}$,
subject to the relations $[M]= [M']+ [M'']$ whenever there is an exact sequence
$0\to M'\to M\to M''\to 0$ in $\mathcal{N}$. 

Let $\mathcal{N}^{\BZ}$ denote the category 
whose objects are pairs $(M, \alpha)$, where $M$ is an object of 
$\mathcal{N}$ and $\alpha$ is an automorphism of $M$ in $\mathcal{N}$.  
A morphism $\phi: (M, \alpha) \to (M', \alpha')$ in $\mathcal{N}^{\BZ}$ is a morphism 
$\phi: M \to M'$ in $\mathcal{N}$  such that $\alpha'\phi = \phi\alpha$. 

The {\it Whitehead group} $K_1(\mathcal{N})$ of the category $\mathcal{N}$ is 
the quotient of the Grothendieck group of the category $\mathcal{N}^{\BZ}$
by the subgroup generated by  relations $[M, \alpha\beta]=  [M, \alpha]+ [M, \beta]$
whenever $M \in \mathcal{N}$ and $\alpha$ and $\beta$ are automorphisms of $M$.  
Hence, the group $K_1(\mathcal{N})$ is also an Abelian group. 

For the ring $R$, the groups $G_0(R)$ and $G_1(R)$ are, respectively, the Grothendieck group and the Whitehead group
of the category $\mathcal{M}(R)$. 

The category of finitely generated projective $R$-modules $\mathcal{P}(R)$ is an admissible 
subcategory of $\mathcal{M}(R)$. The groups $K_0(R)$ and $K_1(R)$ are, respectively, the 
Grothendieck group and the Whitehead group of the category $\mathcal{P}(R)$.
For any  field $k$, $G_0(k)= K_0(k)= \BZ$ and $G_1(k)=K_1(k) = k^*$.  

Note that if $\phi: A \to B$ is a ring homomorphism, then $\phi$ induces group homomorphisms 
$\tilde{\phi}: K_i(A) \to K_i(B)$ for $i=0, 1$. 
Let $\mathcal{P}_{<\infty}(R)$ denote the full subcategory of $\mathcal{M}(R)$ consisting of
$R$-modules with finite projective dimension.
From the definition of Grothendieck group it is easy to see that there is a  canonical 
group homomorphism
$$
K_0(R) (=K_0(\mathcal{P}(R))) \to K_0(\mathcal{P}_{<\infty}(R))
$$
which is an isomorphism by \cite[Corollary 8.5]{B2}.

Now let $u$ be a nonzerodivisor in $R$. Set

\medskip

\noindent
$\mathcal{P}_{<\infty}(R,u)$: the full subcategory of $\mathcal{P}_{<\infty}(R)$ consisting of $u$-torsion $R$-modules.  

\medskip

\noindent
$\mathcal{P}_{1}(R,u)$: the full subcategory of $\mathcal{P}_{<\infty}(R)$ consisting of $u$-torsion $R$-modules $M$
with projective dimension $Pd_R(M)=1$. 

\begin{rem}\label{r3}
{\em As before, there are canonical group homomorphisms 
$$
K_0(\mathcal{P}_{<\infty}(R,u)) \to K_0(\mathcal{P}_{<\infty}(R)) \text{~~and~~}
K_0(\mathcal{P}_{1}(R,u)) \to K_0(\mathcal{P}_{<\infty}(R,u)).
$$
Moreover the homomorphism $K_0(\mathcal{P}_{1}(R,u)) \to K_0(\mathcal{P}_{<\infty}(R,u))$ 
is an isomorphism by \cite[Lemma 10.3(b)]{B2}. 
Thus, we have a canonical map $\delta: K_0(\mathcal{P}_{1}(R,u)) \to K_0(R)$ such that
if $M$ is a $u$-torsion $R$-module with projective dimension $Pd_R(M)=1$, then 
$\delta([M])=[P_0]-[P_1]$ where $0\rightarrow P_1\rightarrow P_0\rightarrow M\rightarrow 0$ 
is a projective resolution of $M$.}
\end{rem}

The exactness of the following localization sequence is proved in \cite[Theorem 10.1]{B2}.

\begin{thm}\label{locseq}
Let the notation be as above and $S= \{u^n~|~n \ge 0\}$. There is a unique homomorphism 
$$
\partial: K_1(R[1/u])\rightarrow K_0(\mathcal{P}_{1}(R,u))
$$
such that $\partial([S^{-1}P, \alpha])=[P/\alpha P]$ whenever $P \in \mathcal{P}(R)$, 
$\alpha \in GL(S^{-1}P)$, and $\alpha(P)\subset P$. Moreover, the 
following sequence is exact. 
\begin{equation}\label{eqn}
\xymatrix 
{ K_1(R) \ar@{->}[r] &  K_1(R[1/u]) \ar@{->}[r]^-{\partial} & K_0(\mathcal{P}_{1}(R,u))\ar@{->}
[r]^-{\delta} & K_0(R)\ar@{->}[r] & K_0(R[1/u])}.
\end{equation}
\end{thm}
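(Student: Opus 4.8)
The plan is to follow the classical, explicit route rather than Quillen's localization theorem: I would construct the boundary map $\partial$ directly on lattices and then verify exactness of \eqref{eqn} by elementary manipulations of short exact sequences of $u$-torsion modules. Since only $K_0$ and $K_1$ appear, everything can be made concrete.

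First I would construct $\partial$. A class in $K_1(R[1/u])=K_1(S^{-1}R)$ is represented by a pair $(S^{-1}P,\alpha)$ with $P\in\mathcal{P}(R)$ and $\alpha\in GL(S^{-1}P)$. Both $P$ and $\alpha(P)$ are lattices in $S^{-1}P$, so some power of $u$ carries one into the other; after adjusting, I may assume $\alpha(P)\subseteq P$. Then $\alpha|_P$ is injective (being injective after inverting $u$) and
$$0\to P\xrightarrow{\alpha}P\to P/\alpha P\to 0$$
is a length-one projective resolution, so $\mathrm{Pd}_R(P/\alpha P)\le 1$; moreover $P/\alpha P$ is $u$-torsion since $\alpha$ is invertible over $R[1/u]$. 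Hence $P/\alpha P\in\mathcal{P}_1(R,u)$, and I set $\partial([S^{-1}P,\alpha])=[P/\alpha P]$. For a general representative I would instead use the invariant expression $[P/(P\cap\alpha P)]-[\alpha P/(P\cap\alpha P)]$, which reduces to $[P/\alpha P]$ when $\alpha(P)\subseteq P$.

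The next step is to show $\partial$ is a well-defined homomorphism. The defining relation $[S^{-1}P,\alpha\beta]=[S^{-1}P,\alpha]+[S^{-1}P,\beta]$ of $K_1$ is matched on the target by the chain $\alpha\beta P\subseteq\beta P\subseteq P$: the exact sequence
$$0\to\beta P/\alpha\beta P\to P/\alpha\beta P\to P/\beta P\to 0$$
together with the isomorphism $\beta P/\alpha\beta P\cong P/\alpha P$ induced by $\beta$ gives $[P/\alpha\beta P]=[P/\alpha P]+[P/\beta P]$ in $K_0(\mathcal{P}_1(R,u))$. Independence of the chosen lattice follows by passing through a common sublattice and checking that the resulting ambiguity vanishes in $K_0$.

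Finally I would establish exactness at each node. At $K_0(R)$ the composite to $K_0(R[1/u])$ is zero because a resolution of a $u$-torsion module splits after inverting $u$ (so $\delta[M]=[P_0]-[P_1]\mapsto 0$, with $\delta$ as in Remark \ref{r3}); conversely, an element $[P_0]-[P_1]$ dying in $K_0(R[1/u])$ yields, by clearing denominators on a stable isomorphism, a map $P_0\oplus R^n\to P_1\oplus R^n$ that is injective with $u$-torsion cokernel $M$, whence the element lies in $\mathrm{im}\,\delta$. At $K_0(\mathcal{P}_1(R,u))$, if $\delta[M]=[P_0]-[P_1]=0$ then after stabilizing $P_0\cong P_1$, and the resolution exhibits $M\cong P_0/\alpha P_0$ for the corresponding injection $\alpha$, so $[M]\in\mathrm{im}\,\partial$. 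I expect the genuinely delicate point, and the main obstacle, to be exactness at $K_1(R[1/u])$: showing that $\partial([S^{-1}P,\alpha])=0$ forces $[S^{-1}P,\alpha]\in\mathrm{im}(K_1(R)\to K_1(R[1/u]))$. Vanishing of $[P/\alpha P]$ in $K_0(\mathcal{P}_1(R,u))$ is only a relation among torsion modules, and descending it to a statement about the automorphism $\alpha$ requires a careful cancellation argument inside the twisted category $\mathcal{P}_1(R,u)^{\BZ}$, realizing the relation by automorphisms already defined over $R$. This cancellation is the technical heart of the result and is precisely what is carried out in \cite[Theorem 10.1]{B2}.
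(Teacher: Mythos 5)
This theorem is not proved in the paper at all: it is stated as a quoted preliminary, and the sentence immediately preceding it attributes the result to \cite[Theorem 10.1]{B2} (with the map $\delta$ coming from \cite[Lemma 10.3(b)]{B2}, as in Remark \ref{r3}). So there is no argument of the paper's own to compare yours against; the relevant comparison is with Bass's proof, which is exactly the lattice-by-lattice construction you outline. Your construction of $\partial$, the additivity argument via $0\to \beta P/\alpha\beta P\to P/\alpha\beta P\to P/\beta P\to 0$, and the exactness arguments at $K_0(R)$ and at $K_0(\mathcal{P}_{1}(R,u))$ are the standard steps and are sound in outline, with two caveats. First, a general element of $K_0(\mathcal{P}_{1}(R,u))$ is a difference $[M]-[N]$, so your single-class arguments need the usual reduction: from a resolution $0\to Q_1\to Q_0\to N\to 0$ and $u^nN=0$ one gets $u^nQ_0\subseteq Q_1\subseteq Q_0$, hence $-[N]=[Q_1/u^nQ_0]-\partial([S^{-1}Q_0,u^n])$, which absorbs the negative term into the image of $\partial$. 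Second, your ``invariant expression'' $[P/(P\cap\alpha P)]-[\alpha P/(P\cap\alpha P)]$ is suspect, because $P\cap\alpha P$ need not be projective, so these quotients need not lie in $\mathcal{P}_{1}(R,u)$; the safe general formula is $[P/u^n\alpha P]-[P/u^nP]$ for $n$ with $u^n\alpha(P)\subseteq P$. The more basic point, however, is that your proposal is not a self-contained proof: you explicitly defer exactness at $K_1(R[1/u])$, which you rightly identify as the technical heart, to \cite[Theorem 10.1]{B2} --- precisely the citation the paper itself uses for the entire theorem. In the end your proposal rests on the same external source as the paper, merely reconstructing (correctly, in sketch) the easier portions of Bass's argument along the way.
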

                                                  
Any finitely generated projective $R/uR$-module $P$ 
can be regarded as a $u$-torsion $R$-module with $Pd_R(P) =1$.
Hence there exists a well-defined map 
\begin{equation}\label{eqka}
K_0(u): K_0(R/uR)\to K_0(\mathcal{P}_{1}(R,u)) \text{~~such that~~}  K_0({u})([Q])=[Q]
\end{equation}                                                             
where $Q$ is a finitely generated projective $R/uR$-module.
When $R$ is a polynomial ring in one variable over its subring $C$ 
with $u$ as a variable, we have the following result 
(cf. \cite[Proposition 11.3]{B2}). 

\begin{lem}\label{l1}
Suppose that $R= C[u] =C^{[1]}$ is a polynomial ring over $C$. Then
the homomorphism $K_0(u): K_0(R/uR)\to K_0(\mathcal{P}_{1}(R, u))$ 
defined in equation (\ref{eqka}) is injective. 
\end{lem}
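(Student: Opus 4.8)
The plan is to realize $K_0(u)$ as a composite involving the boundary map $\partial$ of the localization sequence (Theorem \ref{locseq}) and then to split off the relevant summand using the Bass--Heller--Swan decomposition of $K_1$ of a Laurent polynomial ring. First I would record the two identifications $R/uR \cong C$ and $R[1/u] = C[u,u^{-1}]$, the latter holding because $u$ is a variable over $C$. For a finitely generated projective $C$-module $Q$, write $\tilde{Q} = Q \otimes_C R$; this is a finitely generated projective $R$-module with $\tilde{Q}/u\tilde{Q} \cong Q$, and multiplication by $u$ gives an automorphism $\mu_u$ of $\tilde{Q}[1/u] = Q \otimes_C C[u,u^{-1}]$ carrying $\tilde{Q}$ into itself. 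By the defining formula for $\partial$ in Theorem \ref{locseq}, $\partial([\tilde{Q}[1/u], \mu_u]) = [\tilde{Q}/u\tilde{Q}] = [Q]$, which by (\ref{eqka}) equals $K_0(u)([Q])$. Thus, writing $\beta \colon K_0(C) \to K_1(C[u,u^{-1}])$ for the ``cup product with the unit $u$'', namely $[Q] \mapsto [(Q\otimes_C C[u,u^{-1}],\, \mu_u)]$, I obtain the key factorization $K_0(u) = \partial \circ \beta$.

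Next I would exploit exactness of the sequence in Theorem \ref{locseq} at $K_1(R[1/u])$: the kernel of $\partial$ is precisely the image of $K_1(R) = K_1(C[u])$ inside $K_1(C[u,u^{-1}])$. Hence if $x \in K_0(C)$ satisfies $K_0(u)(x) = 0$, then $\partial(\beta(x)) = 0$, so $\beta(x)$ lies in the image of $K_1(C[u])$. To force $x = 0$ it therefore suffices to produce a homomorphism $\tau \colon K_1(C[u,u^{-1}]) \to K_0(C)$ enjoying the two properties $\tau \circ \beta = \mathrm{id}_{K_0(C)}$ and $\tau$ vanishes on the image of $K_1(C[u]) \to K_1(C[u,u^{-1}])$. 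Indeed, granting such a $\tau$, if $\beta(x)$ is the image of some $y \in K_1(C[u])$ then $x = \tau(\beta(x)) = \tau(\text{image of } y) = 0$, which is exactly the injectivity of $K_0(u)$.

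The existence of such a $\tau$ is precisely the content of the Bass--Heller--Swan fundamental theorem, which supplies a direct-sum decomposition $K_1(C[u,u^{-1}]) \cong K_1(C) \oplus K_0(C) \oplus NK_1(C) \oplus NK_1(C)$ in which $\beta$ is the inclusion of the $K_0(C)$-summand, whereas the image of $K_1(C[u]) = K_1(C) \oplus NK_1(C)$ meets that summand only in $0$ (the polynomial, as opposed to the full Laurent, part carries no $K_0$-component). Taking $\tau$ to be the projection onto the $K_0(C)$-summand yields both required properties simultaneously. I expect this last step---verifying that the ``multiplication by $u$'' summand $\beta(K_0(C))$ is a genuine direct summand complementary to the image of $K_1(C[u])$---to be the main obstacle and the true mathematical content of the lemma; everything preceding it is a formal manipulation of the localization sequence already made available by Theorem \ref{locseq} and Remark \ref{r3}.
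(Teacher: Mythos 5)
Your formal skeleton is correct, and it is worth recording exactly which steps check out: the factorization $K_0(u)=\partial\circ\beta$, where $\beta([Q])=[(Q\otimes_C C[u,u^{-1}],\mu_u)]$, is immediate from the defining property $\partial([S^{-1}P,\alpha])=[P/\alpha P]$ in Theorem \ref{locseq}, applied to $P=Q\otimes_C C[u]$ and $\alpha=\mu_u$, combined with equation (\ref{eqka}); exactness of the sequence (\ref{eqn}) at $K_1(R[1/u])$ correctly reduces injectivity to producing a homomorphism $\tau$ with $\tau\circ\beta=\mathrm{id}_{K_0(C)}$ which kills the image of $K_1(C[u])$; and the Bass--Heller--Swan decomposition, in which $\beta$ is the inclusion of the $K_0(C)$-summand and the image of $K_1(C[u])$ has trivial $K_0$-component, does supply such a $\tau$. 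However, you should be aware that the paper offers no argument at all here: Lemma \ref{l1} is quoted from \cite[Proposition 11.3]{B2}. Measured against that source, your route has a serious logical caveat: in \cite{B2}, as in essentially all classical treatments, the fundamental theorem you invoke is itself \emph{derived} from the localization sequence of Theorem \ref{locseq} together with an analysis of the torsion category $\mathcal{P}_{1}(C[u],u)$, of which the present lemma (indeed its stronger isomorphism form) is a part. So, as a substitute for the paper's citation, your argument is circular unless you rest it on a proof of Bass--Heller--Swan that does not pass through this torsion-category computation; such proofs exist (e.g.\ via Quillen's higher $K$-theory), but they are far heavier than the statement being proved. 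Your own closing sentence concedes that the summand verification inside Bass--Heller--Swan is ``the true mathematical content of the lemma''; the right conclusion to draw from that observation is that this content should be proved directly rather than imported from a theorem whose standard proof contains it.

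The direct argument is short and is the one to give. Every object $M$ of $\mathcal{P}_{1}(C[u],u)$ is finitely generated and projective as a $C$-module: when $uM=0$, restricting a length-one projective $C[u]$-resolution shows $M$ has projective dimension at most $1$ over $C$, and the change-of-rings spectral sequence for $C[u]\to C[u]/(u)$ together with $Pd_{C[u]}(M)\le 1$ forces an embedding of $\mathrm{Ext}^1_C(M,N)$ into $\mathrm{Ext}^3_C(M,N)=0$ for every $C$-module $N$; the general $u$-torsion case is the standard identification of $\mathcal{P}_{1}(C[u],u)$ with the category of finitely generated projective $C$-modules equipped with a nilpotent endomorphism $\nu$, via the characteristic sequence $0\to Q\otimes_C C[u]\to Q\otimes_C C[u]\to M\to 0$ with middle map $u\otimes 1-1\otimes\nu$. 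Granting this, restriction of scalars along $C\hookrightarrow C[u]$ yields a well-defined homomorphism $\rho\colon K_0(\mathcal{P}_{1}(C[u],u))\to K_0(C)$, $[M]\mapsto [M|_C]$; it is additive because an exact sequence in $\mathcal{P}_{1}(C[u],u)$ restricts to an exact (hence split) sequence of projective $C$-modules, and visibly $\rho\circ K_0(u)=\mathrm{id}$, which gives injectivity. This is elementary and self-contained, whereas your proof outsources the entire difficulty to a much deeper theorem and, in the standard development, gets the logical order backwards.
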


\begin{rem}
{\em The map in equation (\ref{eqka}) is not always injective. 
For instance, if $A= \BC[X, Y, Z, T]/(X^2Y+X+Z^2+T^3)$ and 
$x$ denotes the image of $X$ in $A$, then the map 
$K_0(x): K_0(A/xA)\to K_0(\mathcal{P}_{1}(A,x))$ is not injective.

For, $K_0(A/xA)= K_0(C^{[1]})$, where $C=\BC[Z,T]/(Z^2+T^3)$. 
Since $C$ is non-regular, ${\rm Pic}(C)$ is not finitely generated (cf. Lemma \ref{lpic1}) and 
hence $K_0(A/xA)$ is not a finitely generated group. We now show that 
$K_0(\mathcal{P}_{1}(A,x)) \cong \BZ$.  This will prove that the map 
$K_0(x)$ cannot be injective. 

Since $A$ is a regular ring, any finitely generated 
$A/xA$-module has finite projective dimension when treated as an $A$-module. 
Therefore $\mathcal {M}(A/xA)$ can be regarded as a full subcategory of
$\mathcal{P}_{< \infty}(A,x)$ such that the canonical map
$K_0(\mathcal{M}(A/xA)) \rightarrow K_0(\mathcal{P}_{< \infty}(A,x))$ defined by sending  
$[M]$ to $[M]$ is an isomorphism. 
By definition, $G_0(A/xA)=K_0(\mathcal{M}(A/xA))$; 
by Remark \ref{ktheory}(viii), $G_0(A/xA) (= G_0(C^{[1]}))= G_0(C)$; 
and, by  Lemma \ref{lemg}(i), $G_0(C) \cong \BZ$.
Hence, since $K_0(\mathcal{P}_{1}(A,x)) \cong K_0(\mathcal{P}_{< \infty}(A,x))$ by Remark \ref{r3},
we have $K_0(\mathcal{P}_{1}(A,x))\cong \BZ$.
}
\end{rem}

The following lemma occurs in \cite[Theorem 1.1]{We}. 

\begin{lem}\label{ll1}
Let $\phi:A\to B $ be a ring homomorphism. Let $u\in A$ and $\phi(u)\in B$ be nonzerodivisors of $A$ and $B$ respectively. 
Then the map $K_0(\tilde{\phi}): K_0(\mathcal{P}_{1}(A,u))\to K_0(\mathcal{P}_{1}(B,\phi(u)))$ sending 
$[M]$ to $[M\ot_AB]$ is well defined. 
\end{lem}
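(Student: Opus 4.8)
The plan is to verify two things: first, that for every object $M$ of $\mathcal{P}_1(A,u)$ the base-changed module $M\ot_A B$ is again (the class of) an object of $\mathcal{P}_1(B,\phi(u))$, so that $[M\ot_A B]$ makes sense; and second, that the assignment $[M]\mapsto[M\ot_A B]$ carries the defining relations of $K_0(\mathcal{P}_1(A,u))$ to relations in $K_0(\mathcal{P}_1(B,\phi(u)))$. Both points reduce to the single key assertion that $\operatorname{Tor}_1^A(M,B)=0$ for every $M\in\mathcal{P}_1(A,u)$.

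First I would establish this vanishing. Fix $M\in\mathcal{P}_1(A,u)$ and choose a finitely generated projective resolution $0\to P_1\xrightarrow{\psi}P_0\to M\to 0$ (possible since $Pd_A(M)=1$). Because $M$ is $u$-torsion, $M[1/u]=0$, so $\psi[1/u]\colon P_1[1/u]\to P_0[1/u]$ is an isomorphism. Now $\operatorname{Tor}_1^A(M,B)=\ker(\psi\ot_A B)$, and I claim this kernel is zero. On one hand, localizing $\psi\ot_A B$ at $\phi(u)$ recovers the base change of the isomorphism $\psi[1/u]$ along $A[1/u]\to B[1/\phi(u)]$, which is again an isomorphism; hence $\ker(\psi\ot_A B)$ is $\phi(u)$-torsion. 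On the other hand, $P_1\ot_A B$ is a direct summand of a free $B$-module, and since $\phi(u)$ is a nonzerodivisor of $B$, free $B$-modules—and thus their summands—are $\phi(u)$-torsion-free. A $B$-module that is simultaneously $\phi(u)$-torsion and $\phi(u)$-torsion-free vanishes, so $\ker(\psi\ot_A B)=0$.

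With the vanishing in hand, applying $-\ot_A B$ to the resolution yields a short exact sequence $0\to P_1\ot_A B\to P_0\ot_A B\to M\ot_A B\to 0$ with projective outer terms, so $Pd_B(M\ot_A B)\le 1$; moreover $M\ot_A B$ is finitely generated and $\phi(u)$-torsion (since $u^nM=0$ forces $\phi(u)^n(M\ot_A B)=0$). Hence $M\ot_A B$ represents a class in $K_0(\mathcal{P}_1(B,\phi(u)))$. (If $Pd_B(M\ot_A B)=0$, then $M\ot_A B$ is projective and $\phi(u)$-torsion, hence zero, with class $0$, so this edge case causes no difficulty.)

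Finally I would check that relations are preserved. Given a short exact sequence $0\to M'\to M\to M''\to 0$ in $\mathcal{P}_1(A,u)$, the long exact $\operatorname{Tor}$ sequence together with $\operatorname{Tor}_1^A(M'',B)=0$ gives a short exact sequence $0\to M'\ot_A B\to M\ot_A B\to M''\ot_A B\to 0$ of $\phi(u)$-torsion $B$-modules of projective dimension at most one, whence $[M\ot_A B]=[M'\ot_A B]+[M''\ot_A B]$ by the defining relations of the Grothendieck group. Thus the assignment descends to a well-defined homomorphism. The crux of the whole argument—and the only place the hypothesis on $\phi(u)$ is used—is the $\operatorname{Tor}$-vanishing of the second step; everything else is formal.
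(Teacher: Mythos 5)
Your proof is correct, but there is no internal argument in the paper to compare it against: the paper does not prove Lemma \ref{ll1} at all, it simply quotes it from \cite[Theorem 1.1]{We}. Your proposal therefore supplies exactly the content that the paper outsources to the literature, and it does so by what is essentially the standard mechanism behind Weibel's result. The crux is the vanishing $\operatorname{Tor}_1^A(M,B)=0$, and your argument for it is complete and clean: the kernel of $\psi\otimes_A B$ is $\phi(u)$-torsion because $\psi$ becomes an isomorphism after inverting $u$ (this uses that $M$ is finitely generated and $u$-torsion), while it is $\phi(u)$-torsion-free because it is a $B$-submodule of the projective module $P_1\otimes_A B$ and $\phi(u)$ is a nonzerodivisor in $B$; a module that is both is zero. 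The deduction that $0\to P_1\otimes_AB\to P_0\otimes_AB\to M\otimes_AB\to 0$ is exact, and the preservation of relations via the long exact $\operatorname{Tor}$ sequence, are then routine and correctly executed. Two glosses are worth adding. First, the existence of a resolution $0\to P_1\to P_0\to M\to 0$ with both terms finitely generated projective uses that $A$ is Noetherian; this is the standing hypothesis of Section \ref{prelim} and should be invoked explicitly. Second, your edge case $M\otimes_AB=0$ (which can genuinely occur, e.g.\ when $\phi(u)$ becomes a unit in $B$) is handled correctly, but it can be absorbed more systematically by reading the class $[M\otimes_AB]$ in $K_0(\mathcal{P}_{<\infty}(B,\phi(u)))$, which is canonically isomorphic to $K_0(\mathcal{P}_{1}(B,\phi(u)))$ by Remark \ref{r3}; with that convention every image class literally makes sense, zero module included.
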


The following result on the Picard group of a curve  (\cite[p. 258]{E}, \cite[Theorem 3.2]{Wi}) will be 
used in our proofs of Lemma \ref{polycur} and Theorem \ref{pm}.  
Recall that for an integral domain $R$ with field of fractions $F$, 
$\widetilde{K}_0(R)$ is the kernal of the group homomorphism $K_0(R)\to \mathbb{Z}$ defined by 
$[P]\mapsto \mbox{dim}_F(P\ot_R F)$. Moreover, if $C$ is a one-dimensional domain, then 
${\rm Pic}(C) \cong \widetilde{K}_0(C)$. 

\begin{lem}\label{lpic1}
Let $k$ be an algebraically closed field of characteristic zero and $C$ be the coordinate ring of an irreducible affine curve over $k$. 
Then ${\rm Pic}(C)$ is finitely generated if and only if $C$ is a non-singular affine rational curve.
As a consequence, $K_0(C)$ is finitely generated if and only if $C$ is a non-singular affine rational curve.
\end{lem}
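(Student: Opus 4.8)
The plan is to analyse $\mathrm{Pic}(C)$ through the normalization of $C$ together with a conductor square. First note that the ``as a consequence'' clause is formal: since $C$ is a one-dimensional domain we may invoke the stated isomorphism $\mathrm{Pic}(C)\cong\widetilde{K}_0(C)$, and the rank homomorphism $K_0(C)\to\BZ$ splits (free modules give a section), so $K_0(C)\cong\BZ\oplus\widetilde{K}_0(C)\cong\BZ\oplus\mathrm{Pic}(C)$. Thus $K_0(C)$ is finitely generated if and only if $\mathrm{Pic}(C)$ is, and it suffices to prove the first assertion. Now let $\overline{C}$ be the integral closure of $C$ in its fraction field $F$; it is a finite $C$-module and a smooth affine curve, the normalization map being an isomorphism away from the finitely many singular points of $C$. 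Let $\mathfrak{c}=\{x\in\overline{C}\mid x\overline{C}\subseteq C\}$ be the conductor, a nonzero ideal of both $C$ and $\overline{C}$ of finite $k$-colength. The square with corners $C,\overline{C},C/\mathfrak{c},\overline{C}/\mathfrak{c}$ is a Milnor (cartesian) square, and since $C/\mathfrak{c}$ and $\overline{C}/\mathfrak{c}$ are Artinian their Picard groups vanish; the associated units--Picard Mayer--Vietoris sequence therefore reduces to the exact sequence
$$\overline{C}^{\,*}\oplus(C/\mathfrak{c})^*\lra(\overline{C}/\mathfrak{c})^*\lra\mathrm{Pic}(C)\lra\mathrm{Pic}(\overline{C})\lra 0.$$
If $C$ is a non-singular rational curve then $C=\overline{C}\cong k[X,1/f(X)]$ is a localization of a PID, so $\mathrm{Pic}(C)=0$ is finitely generated; this is the easy direction.

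For the converse I would treat two cases. Suppose first that $C$ is not rational, so that $\overline{C}$ is a smooth affine curve whose smooth projective model $\overline{X}$ has genus $g\ge 1$. The displayed sequence yields a surjection $\mathrm{Pic}(C)\twoheadrightarrow\mathrm{Pic}(\overline{C})$, so it suffices to show that $\mathrm{Pic}(\overline{C})$ is not finitely generated. Writing $\overline{X}\setminus\Spec\overline{C}=\{P_1,\dots,P_r\}$ with $r\ge 1$, there is a right-exact sequence $\BZ^r\to\mathrm{Pic}(\overline{X})\to\mathrm{Pic}(\overline{C})\to 0$, and $\mathrm{Pic}(\overline{X})\cong\BZ\oplus J(k)$, where $J$ is the Jacobian of $\overline{X}$. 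Since $k$ is algebraically closed of characteristic zero, $J(k)$ is a nontrivial divisible group for $g\ge 1$; quotienting out the finitely generated image of $\BZ^r$ leaves a nontrivial divisible group, which cannot be finitely generated, so $\mathrm{Pic}(\overline{C})$ and hence $\mathrm{Pic}(C)$ are not finitely generated.

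The remaining and most delicate case is when $C$ is rational but singular. Here $\overline{C}\cong k[X,1/f(X)]$ is a PID, so $\mathrm{Pic}(\overline{C})=0$, and the sequence identifies $\mathrm{Pic}(C)$ with the cokernel $Q$ of the unit map $\overline{C}^{\,*}\oplus(C/\mathfrak{c})^*\to(\overline{C}/\mathfrak{c})^*$. The target $(\overline{C}/\mathfrak{c})^*$ is the group of $k$-points of a connected commutative linear algebraic group $G$ of dimension $\ell_1:=\dim_k(\overline{C}/\mathfrak{c})$, a product of copies of $\Ga_m$ and, in characteristic zero, of the vector groups arising from the nilpotents. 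The image of $(C/\mathfrak{c})^*$ has dimension at most $\ell_2:=\dim_k(C/\mathfrak{c})$, while $\overline{C}^{\,*}\cong k^*\times\BZ^s$ contributes, beyond the diagonal torus already present in the image of $(C/\mathfrak{c})^*$, only finitely many zero-dimensional classes; hence the Zariski closure $H$ of the total image is a closed subgroup with $\dim H\le\ell_2$. Since $C$ is singular we have $C/\mathfrak{c}\subsetneq\overline{C}/\mathfrak{c}$, so $\ell_2<\ell_1$ and $\dim(G/H)\ge\ell_1-\ell_2>0$. Thus $G/H$ is a positive-dimensional commutative algebraic group whose identity component has $k$-points containing a copy of $\Ga_a(k)=(k,+)$ or $\Ga_m(k)=k^*$, neither of which is finitely generated, and $Q$ surjects onto $(G/H)(k)$; therefore $\mathrm{Pic}(C)=Q$ is not finitely generated.

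The main obstacle is exactly this last case: one must convert the failure of $C$ to be normal into a genuinely positive-dimensional unipotent or toric quotient of unit groups and then argue non-finite-generation of its $k$-points. This is where both hypotheses are essential: characteristic zero is used to linearize the unipotent (nilpotent) part as a vector group, and algebraic closedness guarantees that all residue fields are $k$ and that $\Ga_a(k)$ and $\Ga_m(k)$ are divisible, hence not finitely generated. The non-rational case rests on the parallel fact that the Jacobian $J(k)$ is divisible and nontrivial in positive genus, and the non-singular rational case is immediate from the PID property. Combining the three cases gives the stated equivalence.
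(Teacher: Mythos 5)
A preliminary remark on the comparison: the paper itself gives no proof of Lemma \ref{lpic1} --- it is quoted from \cite[p. 258]{E} and \cite[Theorem 3.2]{Wi} --- so your argument can only be measured against the standard proof behind those references, which is precisely the route you take (normalisation, conductor Milnor square, units--Pic Mayer--Vietoris sequence, case split on the genus of the normalisation). Your reduction of the $K_0$-statement to the $\mathrm{Pic}$-statement, the exact sequence itself, the easy direction, and the non-rational case are essentially correct; in the non-rational case the quotient $\mathrm{Pic}(\overline{X})/\mathrm{im}(\BZ^r)$ is not itself divisible as written, but it contains the nontrivial divisible image of $J(k)$ (or one argues that an extension of a finitely generated abelian group by a finitely generated one is finitely generated), which is only a matter of phrasing.

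The rational-but-singular case, however, has a genuine gap: the claim that the Zariski closure $H$ of the total image of $\overline{C}^{\,*}\oplus(C/\mathfrak{c})^*$ satisfies $\dim H\le\ell_2$ is false, because a finitely generated subgroup of an algebraic group can be Zariski dense in a positive-dimensional subgroup. Concretely, take $C=\{f\in k[t,1/t]\mid f(2)=f(3)\}$, the nodal curve obtained by glueing the points $t=2$ and $t=3$ of $\Spec k[t,1/t]$. Then $\overline{C}=k[t,1/t]$, $\mathfrak{c}=(t-2)(t-3)\overline{C}$, $G=(\overline{C}/\mathfrak{c})^*\cong k^*\times k^*$, the image of $(C/\mathfrak{c})^*$ is the diagonal, and the unit $t$ maps to $(2,3)$; since $2^m3^n=1$ forces $m=n=0$, no nontrivial character of $k^*\times k^*$ kills $(2,3)$, so $H=G$ and your surjection $Q\twoheadrightarrow(G/H)(k)$ detects nothing --- even though $\mathrm{Pic}(C)\cong k^*/\langle 3/2\rangle$ is in fact not finitely generated. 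The repair is to treat the two sources of units asymmetrically, exactly as in your non-rational case: let $H_0$ be the image of the connected algebraic group $(C/\mathfrak{c})^*$ in $G$; it is a subgroup whose underlying set is constructible of dimension at most $\ell_2<\ell_1$, hence a \emph{proper} subgroup of $G(k)$. Now $G(k)$ is divisible, being a product of copies of $k^*$ (divisible since $k$ is algebraically closed) and of groups $1+\mathfrak{m}$ with $\mathfrak{m}$ nilpotent (divisible since $\mathrm{char}\,k=0$). Hence $G(k)/H_0$ is a nontrivial divisible group, so not finitely generated. Finally, $\mathrm{Pic}(C)$ is the quotient of $G(k)/H_0$ by the finitely generated image of $\BZ^s$ (the $k^*$-factor of $\overline{C}^{\,*}$ already lies in $H_0$); if $\mathrm{Pic}(C)$ were finitely generated, then $G(k)/H_0$ would be an extension of a finitely generated group by a finitely generated group, hence finitely generated --- a contradiction. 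With this correction, your proof is complete and agrees in substance with the cited sources.
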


We record below some well-known facts on the groups $G_0(R)$ and $G_1(R)$. 
For the definition of $G_i(R)$ for $i\ge 2$, we refer to \cite[Chapters 4 and 5]{Sr}. 
On first reading, a reader may skip this and refer to it whenever necessary. 
\begin{rem}\label{ktheory}
{\em Let $R$ be a Noetherian ring. 

(i) If $R$ is an integral domain with field of fractions $K$,  
then there exists a group homomorphism $G_0(R)\to \mathbb{Z}$ defined by 
$[M]\mapsto \mbox{dim}_K(M\ot_R K)$. It is called the rank map. 

(ii) The group $G_0(R)$ is generated by $[R/\p]$, 
where $\p$ varies over ${\rm Spec}~R$. For, any finitely generated module $M$ 
over the Noetherian ring $R$ has a filtration of
submodules $M=M_0\supset M_1\supset M_2\supset \ldots \supset M_r=0$
satisfying $M_i/M_{i+1}\cong R/\p_i$ for some prime ideals $\p_i$ of $R$, so that 
$[M]=\sum_i [R/p_i]$ (cf. \cite[Proposition 4.4]{B2}).

Let $M$ be a finitely generated $R$-module with filtration
$M=M_0\supset M_1\supset M_2\supset \ldots \supset M_n=0$ and $\alpha \in {\rm Aut}_R(M)$
with $\alpha(M_i) =M_{i}$ for $0\le i \le n$. Let $N_i= M_i/M_{i+1}$ and $\alpha_i$ 
be the induced automorphism of $N_i$ for $0\le i < n$. Then $[M, \alpha] = \sum_{i=0}^{n-1} [N_i, \alpha_i]$ (cf. \cite[Proposition 4.6]{B2}). 

(iii) If $\p$ is a principal ideal of $R$ generated by a nonzerodivisor $p$ of $R$ then $[R/\p]=0$ in $G_0(R)$
as there exists a short exact sequence $0 \to R \stackrel{p}{\rightarrow} R \to R/\p \to 0$ and 
$[R/\p]=[R]-[R]$ in $G_0(R)$.

(iv) There is a canonical group homomorphism $\theta: R^* \to G_1(R)$ defined by 
$\theta(u) = [R, \tilde{u}]$, where $\tilde{u}: R \to R$ is the $R$-linear 
automorphism defined by $\tilde{u}(r)=ru$ $\forall$ $r \in R$.
In particular, if $R$ is a finitely generated algebra over a field $k$, 
then there is a canonical map $\theta: k^* \to G_1(R)$.  

(v) If $\imath: R \to S$ is a ring homomorphism such that $S$ is a finite $R$-module, 
then $\imath$ induces, via restriction of scalars, group homomorphisms ${\imath}_*: G_i(S) \to G_i(R)$ for each $i\ge 0$.

(vi) Let $S$ be a Noetherian ring and $\imath: R \to S$ be a flat ring homomorphism.  
Then $\imath$ induces group homomorphisms ${\imath}^*: G_i(R) \to G_i(S)$ for $i\ge 0$ 
induced by the functor $\otimes_R S: \mathcal{M}(R) \to \mathcal{M}(S)$ defined by
$M \to M\otimes_R S$ (cf. \cite[5.6 p. 52 and 5.8 p. 53]{Sr}). 
In particular, if $k$ is a field and $R$ is a finitely generated $k$-algebra, and 
$j: k \hookrightarrow R$ is the canonical inclusion map,  
then there exists a canonical group homomorphism ${j}^*: G_i(k) \to G_i(R)$ for $i\ge 0$
and, for $i=1$, this is the map $\theta: k^* \to G_1(R)$ defined in (iv). 

(vii) Let $x$ be a nonzerodivisor of $R$, $j: R \to R[x^{-1}]$ the inclusion map, and 
$\pi: R \to R/xR$  the canonical map.
Then we have the following long exact sequence of groups:
\begin{equation*}
{\longrightarrow} G_{i}(R/xR) \stackrel{\pi_*}{\longrightarrow} G_i(R) \stackrel{j^*}{\longrightarrow} G_i(R[1/x])
\stackrel{\delta}{\longrightarrow} G_{i-1}(R/xR) \to  \dots\to G_0(R[1/x]) \to 0.
\end{equation*}
Moreover, if $\phi: R\to S$ is a flat ring homomorphism with $u= \phi(x)$, 
then we have the natural commutative diagram:
\[
\xymatrix{
\dots \ar[r] & G_i(R/xR) \ar[r] \ar[d] & G_i(R) \ar[r] \ar[d] & G_i(R[1/x]) \ar[r]^{\delta} \ar[d]  & G_{i-1}(R/xR) \ar[r]\ar[d]& \dots   \\
\dots \ar[r] & G_i(S/u S) \ar[r] &  G_i(S) \ar[r] & G_i(S[1/u]) \ar[r]^-{\delta}  & G_{i-1}(S/uS) \ar[r] & \dots , 
}
\]
where the vertical maps are induced by $\phi$ (cf. \cite[Proposition 5.15, 5.6 p.52 and 5.16 p. 61]{Sr}). 

(viii) For any indeterminate $T$ over $R$, the maps 
$$
G_i(R) \stackrel{}{\longrightarrow} G_i(R[T])
$$
are isomorphisms for all $i \ge 0$. Let $j: R \rightarrow R[T,T^{-1}]$ denote the inclusion map.
Then the induced maps
$$
\jmath^*: G_i(R) \to G_i(R[T,T^{-1}])
$$ 
are split inclusions for all $i \ge 0$. For $i=0$, $\jmath^*$ is an isomorphism and, 
for $i \ge 1$, $G_i(R[T,T^{-1}]) \cong G_i(R) \oplus G_{i-1}(R)$ (cf. \cite[Theorem 5.2]{Sr}).     

(ix) If $R$ is a regular ring, then $G_i(R) = K_i(R)$ $\forall$ $i \ge 0$ (cf. \cite[Theorem 4.6]{Sr}).
In particular, $G_1(k[T]) = K_1(k[T])= k^*$  and $G_1(k[T, f(T)^{-1}])= K_1(k[T, f(T)^{-1}])=k[T, f(T)^{-1}]^*$
for any field $k$, $T$ transcendental over $k$ and $f(T) \in k[T]$. 
}
\end{rem}

\section{Theorem I}\label{regular}
In this section, we deal with a situation where Miyanishi's question
has an affirmative answer; in fact we prove something stronger (Theorem \ref{t5}).
All the derivations considered in this section are non-zero. 

Let $(R,t)$ be a DVR containing $\mathbb Q$ with field of fractions $K$ 
and residue field $k$ and let $A$ be the kernel of a non-zero 
$R$-linear locally nilpotent derivation $D$ of $R[X,Y,Z]$.  
Note that, since $A$ is  factorially closed in $R[X,Y,Z]$ (cf. \cite[pg. 22]{F}),  
we have $A$ is a UFD, $t$ is a prime element of $A$ and  $A/tA$ is a
$k$-subalgebra of $k[X,Y,Z]$. 
Moreover, by Theorem \ref{miya}, $A \otimes_R K = K^{[2]}$. 
Therefore there exist $ f_0, f_1 \in A$ such that $ R[1/t][f_0,f_1] =
 A[1/t] = K^{[2]}$. It is easy to see that  we can choose $ f_0 \in A$ such that its image
$\overline {f_0}$ in $A/tA$ is transcendental over $k$. 
Keeping these facts in mind we state a consequence of 
\cite[Proposition 4.13]{BD} which presents a description of the kernel $A$.

\begin{lem}\label{BD}
Let $(R,t)$ be a DVR containing $\mathbb Q$ with field of fractions $K$ 
and residue field $k$. Let $B = R [X,Y,Z]$, $D (\neq 0) \in \lnd_R(B)$ and  $A=$ ker$(D)$.
Let $ f_0, f_1 \in A$ be such that $A[1/t]= R[1/t][f_0, f_1] =K^{[2]}$.
Moreover assume that $R[f_0, f_1] \neq A $ and that 
the image $\overline {f_0}$ of $f_0$ is transcendental over $k$. 
Then there exists a generating set $\{f_0, f_1, \cdots, f_{n+1}\}$ of $A$ over $R$
such that, setting $A_i:= R[f_0,f_1, \dots, f_{i}]$, 
for $0\le i\le n+1$,  we have, for each $i$,  $1\le i\le n$,
\begin{enumerate}
\item [\rm (i)]  $Q_i: = tA \cap A_i= (t, h_i)A_i$ for some $h_i \in A_i$. Further $Q_i$ is a height two prime ideal of $A_i$. 
\item [\rm (ii)] $f_{i+1} = h_i/t \in A$.
\item [\rm (iii)] $A_{i+1} \cong_{A_{i}} A_{i}[V_{i+1}]/(tV_{i+1}-h_{i})$,  where $A_i[V_{i+1}]= {A_i}^{[1]}$.
\item [\rm (iv)]  $A_i$ is a UFD and $t$ is a prime element of $A_i$.
\end{enumerate}
In particular, $E:= A_n$ is a subring of $A$ finitely generated over $R$ such that 
$E[1/t]= A[1/t]= K^{[2]}$, $E/tE$ is a domain and $A\cong_{E} E[V]/(tV-g)$ where $g= h_n$. 
As a consequence, $A/tA = C^{[1]}$ where $C = E/(t,g) (\hookrightarrow k[X,Y,Z])$ is a polynomial curve over $k$.
\end{lem}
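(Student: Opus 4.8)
The plan is to read off the ascending chain directly from \cite[Proposition 4.13]{BD} and then deduce the ``in particular'' and ``consequence'' clauses by elementary manipulation. First I would check that the hypotheses of \cite[Proposition 4.13]{BD} are met: since $A=\ker(D)$ for the $R$-linear derivation $D$, the discussion preceding the lemma gives that $A$ is a UFD, $t$ is prime in $A$, $A/tA$ is a $k$-subalgebra of $k[X,Y,Z]$, and $A\otimes_R K=K^{[2]}$ by Theorem \ref{miya}; moreover $A$ is finitely generated over $R$ by the finiteness theorem of \cite{BD}, which is what guarantees that the correction process terminates after finitely many steps. With $f_0,f_1$ as in the hypothesis (so $\overline{f_0}$ transcendental over $k$, whence $A_0=R[f_0]=R^{[1]}$), that proposition supplies the finite generating set $\{f_0,\dots,f_{n+1}\}$ together with the rings $A_i=R[f_0,\dots,f_i]$ satisfying (i)--(iv). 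The stopping criterion is Lemma \ref{lemlast}: once the contraction $Q_i=tA\cap A_i$ becomes principal (equal to $tA_i$), the map $A_i/tA_i\ra A/tA$ is injective while $A_i[1/t]=A[1/t]$, forcing $A_i=A$.

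The inductive engine behind (iv), and the cleanest way to package (i)--(iii), is the affine-modification identity. Granting (i) and (ii), property (iii) gives $A_{i+1}=A_i[V_{i+1}]/(tV_{i+1}-h_i)$, whence $A_{i+1}[1/t]=A_i[1/t]$ and
$$
A_{i+1}/tA_{i+1}=A_i[V_{i+1}]/(t,h_i)=(A_i/Q_i)[V_{i+1}].
$$
Since $Q_i$ is prime by (i), the right-hand side is a domain, so $t$ is prime in $A_{i+1}$; and since $A_{i+1}[1/t]=A_i[1/t]$ is a localization of the UFD $A_i$, Nagata's criterion (a Noetherian domain $S$ with a prime element $p$ for which $S[1/p]$ is a UFD is itself a UFD) shows $A_{i+1}$ is a UFD. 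Starting from the base case $A_0=R^{[1]}$, this propagates (iv) along the whole chain.

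For the ``in particular'' clause I would argue that each $f_{i+1}=h_i/t$ lies in $A_i[1/t]$, so inverting $t$ leaves the chain unchanged from $A_1$ onward; hence $E[1/t]=A_n[1/t]=A_1[1/t]=R[1/t][f_0,f_1]=K^{[2]}=A[1/t]$. That $E/tE$ is a domain is the case $i=n$ of (iv), and $A\cong_E E[V]/(tV-g)$ with $g=h_n$ is the case $i=n$ of (iii). Reducing this presentation modulo $t$ then yields
$$
A/tA=E[V]/(t,g)=(E/(t,g))[V]=C^{[1]},\qquad C:=E/(t,g).
$$

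Finally, for the polynomial-curve conclusion I would note that $C$ embeds in $A/tA$, which is a $k$-subalgebra of $k[X,Y,Z]$, so $C\hookrightarrow k[X,Y,Z]$; and $\dim C=1$, since $\dim A=3$ and $t$ is a prime element forces $\dim(A/tA)=2=\dim C^{[1]}$. Thus $C$ is a one-dimensional $k$-subalgebra of a polynomial ring, so by \cite[Lemma B]{E} it embeds in $k^{[1]}$, i.e.\ $C$ is a polynomial curve. The genuinely hard structural content---that $Q_i=tA\cap A_i$ is a height-two prime of the special form $(t,h_i)A_i$ and that the correction process stabilizes---is exactly what \cite[Proposition 4.13]{BD} provides, so the main obstacle is discharged by that citation. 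Within the present deduction the only delicate point is to maintain $E[1/t]=K^{[2]}$ and $E/tE$ a domain \emph{simultaneously} (this is what later makes $E$ a candidate for Sathaye's Theorem \ref{Sa}), and to upgrade $C$ from a rational curve to a polynomial curve, which rests on the embedding $C\hookrightarrow k[X,Y,Z]$ together with \cite[Lemma B]{E}.
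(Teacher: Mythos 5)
Your overall route coincides with the paper's: quote \cite[Proposition 4.13]{BD} for the chain $A_i=R[f_0,\dots,f_i]$, run an induction using primality of $t$ and Nagata's criterion to obtain (iv), and then read off the ``in particular'' clauses exactly as you do --- $E[1/t]=A_1[1/t]=K^{[2]}=A[1/t]$, $E/tE$ a domain and $A\cong_E E[V]/(tV-g)$ from the case $i=n$, $A/tA=(E/(t,g))[V]=C^{[1]}$, and finally $C\hookrightarrow A/tA\hookrightarrow k[X,Y,Z]$ of dimension one, hence a polynomial curve via \cite[Lemma B]{E}. All of that matches the paper, and your explicit dimension count for $C$ is a harmless (correct) elaboration of what the paper leaves implicit.

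The genuine gap is item (iii), which you never prove: you ``grant'' it as part of what \cite[Proposition 4.13]{BD} supplies, whereas the paper takes only (i) and (ii) from that source (together with \cite[Lemma 4.12]{BD}) and proves (iii) itself --- and your induction for (iv) consumes (iii) as input, so as written your argument has an unproved step at its core. The missing argument is short but uses the induction hypothesis: since $Q_i=tA\cap A_i=(t,h_i)A_i$ has height two, $h_i\notin tA_i$; since $t$ is prime in $A_i$ (this is (iv) at stage $i$), the element $tV_{i+1}-h_i$ is prime in $A_i[V_{i+1}]$; and as $A_{i+1}=A_i[f_{i+1}]=A_i[h_i/t]$, this prime element lies in the kernel of the surjection $A_i[V_{i+1}]\to A_{i+1}$, and the induced surjection $A_i[V_{i+1}]/(tV_{i+1}-h_i)\to A_{i+1}$ is a map of domains that becomes an isomorphism after inverting $t$, hence is injective, giving (iii). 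Note the interleaving this forces: (iii) at stage $i$ needs (iv) at stage $i$, and then yields (iv) at stage $i+1$; so (iii) and (iv) must be established together in a single induction, which your structure ``(iii) is given, deduce (iv)'' obscures. A smaller point: the induction cannot start at $A_0=R[f_0]=R^{[1]}$, since no modification datum $h_0$ is given for the step from $A_0$ to $A_1$; the paper instead starts at $A_1=R[f_0,f_1]$, which equals $R^{[2]}$ outright because $R[1/t][f_0,f_1]=K^{[2]}$ forces $f_0,f_1$ to be algebraically independent over $R$.
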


\begin{proof}
(i) and (ii) follow from \cite[Lemma 4.12 and Proposition 4.13]{BD}.

We now show (iii) and (iv) by induction on $i$. 
Since $A_1= R[f_0, f_1] = R^{[2]}$, $A_1$ is a UFD and $t$ is a prime element in $A_1$.
By induction hypothesis, we assume that $A_i$ is a UFD and $t$ is a prime element of $A_i$. 
Since $tA \cap A_i = Q_i= (t, h_i)A_i$ is a height two prime ideal of $A_i$, 
we have  $h_i \notin tA_i$. Hence, $tV_{i+1}-h_i$ is a prime element in 
$A_i[V_{i+1}](= {A_i}^{[1]})$.  As $A_{i+1}= A_i[f_{i+1}]= A_i[h_i/t]$, it follows that 
$tV_{i+1}-h_i$ generates the kernel of the surjective $A_{i}$-algebra homomorphism from 
$A_i[V_{i+1}]\to A_{i+1}$ sending $V_{i+1}$ to $f_{i+1}$. 
Thus, $A_{i+1}\cong_{A_i} A_i[V_{i+1}]/(tV_{i+1}-h_i)$.
Hence, since $A_{i+1}/(t)\cong_{A_i/tA_i} A_{i}[V_{i+1}]/(t, h_i)$ is an integral domain, 
we have $t$ is a prime element of $A_{i+1}$. 
Since $A_{i+1}[1/t]= A_1[1/t]= R[1/t]^{[2]}$, a UFD,  
it follows by Nagata's criterion that $A_{i+1}$ is a UFD (cf. \cite[Theorem 20.2]{Mat}). 

Set $E:= A_n$. Since $A=A_{n+1}$ and $n \ge 1$, we have $R[f_0, f_1] \subseteq E$ and hence $E[1/t]= A[1/t]= K^{[2]}$. 
By (iii) and (iv), $E/tE$ is a domain and $A\cong_{E} E[V]/(tV-g)$ where $g= h_n$ and $E[V]= E^{[1]}$.
Further since $C= E/(t,g) \hookrightarrow A/tA \hookrightarrow k[X,Y, Z]$, we have $C$ is a polynomial curve. 
\end{proof}

\begin{lem}\label{b1}
Let the notation and hypothesis be as in Lemma \ref{BD}. 
Let $\n$ be a  maximal ideal of $E$ containing $(t,g)$. Then
\begin{enumerate}
\item [\rm (i)] $t \not\in \n^2$.
\item [\rm(ii)] If $E_{\n}$ is not regular then there exist infinitely many maximal ideals $\m$ of $A$ containing $t$
for which $A_\m$ is not regular. 
\end{enumerate}
\end{lem}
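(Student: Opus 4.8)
I would treat the two assertions separately, proving the easier part (ii) first and isolating the genuine difficulty in part (i). Throughout I use the presentation $A\cong_E E[V]/(tV-g)$ and the identity $A/tA=C^{[1]}$ supplied by Lemma~\ref{BD}, the primeness of $Q_n=(t,g)=tA\cap E$, and the fact that $E$ is a three-dimensional affine domain over $R$ in which $\n$ has height three (since $E/tE$ is a two-dimensional affine $k$-domain and $t$ is a prime nonzerodivisor).

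For (ii) I would first locate the relevant maximal ideals. Writing $V$ for the image in $A$ with $tV=g$, we have $A/tA=C[V]$ with $C=E/(t,g)$; since $t,g\in\n$, the fibre of $\Spec A\to\Spec E$ over $\n$ is $\Spec\big((E/\n)[V]\big)$, an affine line, so the maximal ideals of $A$ that contain $t$ and contract to $\n$ are exactly $\m_\lambda=\n A+(V-\lambda)A$, of which there are infinitely many. For each $\lambda$ put $\m'=(\n,V-\lambda)\subseteq E[V]$ and $S=E[V]_{\m'}$; then $\dim S=\hgt\n+1=4$ and $A_{\m_\lambda}=S/(tV-g)S$, with $tV-g$ a nonzerodivisor because $A$ is a domain, so $\dim A_{\m_\lambda}=3$. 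Now I compare cotangent spaces: one has $\dim_k(\m_S/\m_S^2)=\dim_k(\n E_\n/\n^2E_\n)+1$, and passing to the hypersurface quotient $S/(tV-g)S$ lowers the embedding dimension by at most one, so the embedding dimension of $A_{\m_\lambda}$ is at least $\dim_k(\n E_\n/\n^2E_\n)$. If $E_\n$ is not regular then $\dim_k(\n E_\n/\n^2E_\n)\ge4>3=\dim A_{\m_\lambda}$, whence every $A_{\m_\lambda}$ is singular; this produces the desired infinite family.

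For (i) the approach rests on an embedding-dimension count that works at ``most'' points, together with a finer analysis at the exceptional ones. Since $Q_n=(t,g)$ has height two and is generated by the two elements $t,g$, the local ring $E_{Q_n}$ is a two-dimensional regular local ring and $\{t,g\}$ is a regular system of parameters there; consequently $(E/tE)_{Q_n/tE}=E_{Q_n}/tE_{Q_n}$ is regular, i.e. $E/tE$ is regular at the generic point of the curve $V(Q_n)$, so $V(Q_n)\not\subseteq\operatorname{Sing}(E/tE)$ and $E/tE$ is singular at only finitely many points of $V(Q_n)$. (The recursion $A_{i+1}/tA_{i+1}=(A_i/tA_i)[V_{i+1}]/(\bar h_i)$ identifies $E/tE\cong C_{n-1}^{[1]}$, a cylinder over a possibly singular curve, whose singular locus is $\operatorname{Sing}(C_{n-1})\times\A^1$; the finitely many bad points lie over $\operatorname{Sing}(C_{n-1})$.) At any $\n$ where $\bar E:=E/tE$ is regular, $\bar E_{\bar\n}$ is regular of dimension two, hence of embedding dimension two; since $t\notin\n^2$ is equivalent to the embedding dimension of $E_\n$ being one more than that of $\bar E_{\bar\n}$, and since $\text{emb.dim }E_\n\le\text{emb.dim }\bar E_{\bar\n}+1=3$ while $\text{emb.dim }E_\n\ge\dim E_\n=3$, equality holds and $t\notin\n^2$ (and, incidentally, $E_\n$ is regular). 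This disposes of all but finitely many $\n$.

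The main obstacle is precisely the finitely many maximal ideals $\n$ on $V(Q_n)$ lying over the singular points of $C_{n-1}$: there $E/tE$ is singular and the cheap count above no longer forces $t\notin\n^2$. Indeed, in the toy model where $C_{n-1}$ is a cusp a direct computation shows that $t$ can fall into $\n^2$, but only at points of the singular line with nonzero last coordinate, so one must show that $V(Q_n)$ avoids exactly these points. To handle the general case I would use the complete-intersection presentation $E=R[f_0,f_1,V_2,\dots,V_n]/(tV_{i+1}-h_i)_{1\le i\le n-1}$ and test $t$ against $\n^2$ via the Jacobian: at such an $\n$ (where $t\mapsto0$) the relations have linear parts $v_{i+1}\,dt-dh_i$, with $v_{i+1}$ the value of $V_{i+1}$, and $t\in\n^2$ would force $dt$ to lie in their span; I would rule this out using the structure of the $h_i$ from Lemma~\ref{BD}---specifically the primeness of each $Q_i=tA\cap A_i$ and the membership $V_{i+1}=h_i/t\in A$---which pins down the values $v_{i+1}$ and the differentials $dh_i$ at these points. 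This Jacobian analysis at the singular points of the cylinder is where the real work lies, and it is the step I expect to be most delicate.
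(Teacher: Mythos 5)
Your part (ii) is correct and is essentially the paper's own argument: form $M_\lambda=(\n,V-\lambda)\subset E[V]$, observe that its embedding dimension is at least $5$ because $V-\lambda\notin M_\lambda^2$, note that dividing by the single element $tV-g$ drops the embedding dimension by at most one while $\dim A_{\m_\lambda}=3$, and let $\lambda$ run over infinitely many elements of $k$. Part (i), however, has a genuine gap, and you have located it yourself without closing it. Your embedding-dimension count proves $t\notin\n^2$ only at those $\n$ where $E/tE$ is regular; at the finitely many maximal ideals of $V(t,g)$ lying over the singular points of the curve $C_{n-1}$ you offer a programme (a Jacobian analysis of the complete-intersection presentation of $E$) rather than a proof. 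Worse, your own toy computation (the cusp cylinder, where $t$ does fall into $\n^2$ at certain points of the singular line) shows the claim is not a formal consequence of the presentation alone: some further input is needed to show $V(t,g)$ avoids such points, and you supply none. These exceptional $\n$ are exactly the ones that matter downstream: in Proposition \ref{bdd2}(i) the lemma is invoked precisely to \emph{deduce} that $(E/tE)_{\overline{\n}}$ is regular, so regularity of $E/tE$ at $\n$ cannot be taken as a hypothesis.

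The missing idea is to leave $E$ altogether and exploit the ambient polynomial ring $B=R[X,Y,Z]$, which your argument never uses. By Lemma \ref{BD} one has $C=E/(t,g)\hookrightarrow A/tA\hookrightarrow B/tB=k[X,Y,Z]$, and by Theorem \ref{thE} the normalization of $C$ is $k[W]$. Given $\n\supseteq(t,g)$, lying-over for the integral extension $C\subseteq k[W]$ produces a prime element $p\in k[W]$ with $pk[W]\cap C=\n/(t,g)$. Since $(B/tB)^*=k^*=k[W]^*$, the element $p$ is a nonunit of $B/tB$, hence lies in some height one prime of $B/tB$; that prime contracts to $pk[W]$ (a maximal ideal) in $k[W]$, hence to $\n/(t,g)$ in $C$, and its preimage in $B$ is a height two prime $P\supseteq tB$ with $E\cap P=\n$ (here one uses $tB\cap E=tA\cap E=(t,g)E$, which holds because $A$ is factorially closed in $B$). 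If now $t\in\n^2$, then $t\in P^2$, which is impossible: $B_P$ is a two-dimensional regular local ring, so $t\in P^2$ would make $(B/tB)_{P/tB}$ singular, contradicting the regularity of $B/tB=k[X,Y,Z]$. This argument is uniform in $\n$ --- it never distinguishes regular from singular points of $E/tE$ --- which is precisely what your approach lacks.
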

\begin{proof}
(i) Suppose, if possible, that $t \in \n^2$. Set $Q:= (t, g)E$. 
By Lemma \ref{BD}, $C(=E/Q) \hookrightarrow A/tA \hookrightarrow B/tB= k[X,Y,Z]$. 
Hence, by Theorem \ref{thE}, the normalisation of $C$ is $k[W]$ for some $W$ transcendental over $k$.    
Thus $E/Q = C \hookrightarrow k[W] \hookrightarrow  B/tB = k[X,Y,Z]$ and  
there exists a prime element $p \in k[W]$ such that $pk[W]$ lies over the prime ideal $\n/Q$ of $E/Q$.
As $(B/tB)^* = k^*= (k[W])^*$, $pB/tB$ is contained in some height one prime ideal of $B/tB$. 
Therefore, there exists a  prime ideal $P$ of $B$ of height two containing $t$ such that
$\n/Q = E/Q \cap P/tB$ and hence $\n = E \cap P$.
Therefore, as $t \in \n^2$, we have $t \in P^2$. 
But this is absurd as $B = R[X,Y,Z]$. 
Hence $t \notin \n^2$.

(ii) Suppose that $E_{\n}$ is not a regular ring. Then 
$\dim_k \n/\n^2\ge 4$. 
Fix $\lambda \in \BQ$. Then $M_{\lambda}:=(\n, V-\lambda)$ is a maximal ideal of $E[V]$  
such that $(t, g) \subseteq M_\lambda$. 
Hence, $M_\lambda$ induces a maximal ideal $\m_\lambda$ of $A (\cong E[V]/(tV-g))$.
Since $V-\lambda \notin M_{\lambda}^2$,   $\dim_k M_{\lambda}/M_{\lambda}^2 \ge 5$, and hence 
$\dim_k \m_\lambda/\m_\lambda^2 \ge 4$, i.e.,  $A_{\m_\lambda}$ is not a regular ring.     
Clearly, if $\lambda \neq \lambda'$ then ${\m_\lambda} \neq {\m_\lambda'}$. Hence the result.
\end{proof}

The following result proves a local version of Theorem I.

\begin{prop}\label{bdd2}
Let $(R,t)$ be a DVR containing $\mathbb Q$ with field of fractions $K$ 
and residue field $k$. Let $B = R [X,Y,Z]$, $D (\neq 0) \in \lnd_R(B)$ and  $A=$ ker$(D)$.
\begin{enumerate}
\item[\rm(i)] If $A$ has at most isolated singularities, then there exists a subring $E$ of $A$ and an element $g \in E$ such that  
$E = R^{[2]}$, $E[1/t]= A[1/t]$, $tA \cap E= (t, g)E$ and $A = E[g/t]$.
\item[\rm (ii)] If $A$ is a regular ring, then $A = R^{[2]}$.
\end{enumerate}
\end{prop}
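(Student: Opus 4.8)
The plan is to reduce both parts to Sathaye's theorem (Theorem~\ref{Sa}) by pinning down the special fibre $A/tA$, using the modification structure supplied by Lemma~\ref{BD}. If $R[f_0,f_1]=A$ then $A=R^{[2]}$ and both assertions are immediate, so I would assume $R[f_0,f_1]\neq A$ and invoke Lemma~\ref{BD}: it produces $E_0:=A_n\subseteq A$ and $g:=h_n\in E_0$ such that $E_0[1/t]=A[1/t]=K^{[2]}$, $E_0/tE_0$ is a domain, $tA\cap E_0=(t,g)E_0$ is a height-two prime, and $A\cong_{E_0}E_0[V]/(tV-g)=E_0[g/t]$, so that $A/tA=C^{[1]}$ where $C=E_0/(t,g)$ is a polynomial curve. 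Thus $A$ is the affine modification of $E_0$ along $(t,g)$, and everything hinges on the fibre over $V(t)$.

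The technical core is a cotangent-space computation at the maximal ideals of $A$ lying over $V(t,g)$. Since $A$ has at most isolated singularities, Lemma~\ref{b1}(ii) forces $E_0$ to be regular at every maximal ideal $\n\supseteq(t,g)$, and Lemma~\ref{b1}(i) gives $t\notin\n^2$; hence $(E_0/tE_0)_{\bar\n}$ is regular of dimension two. For $\lambda\in k$ write $\m_\lambda=(\n,V-\lambda)$; since $t(V-\lambda)\in\m_\lambda^2$ one has $tV-g\equiv\lambda t-g\pmod{\m_\lambda^2}$, so $A_{\m_\lambda}$ is regular precisely when the class of $\lambda t-g$ is nonzero in $\n/\n^2$. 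Therefore $A$ is singular over $\n$ if and only if the classes of $g$ and $t$ are linearly dependent in $\n/\n^2$, which (as $t\notin\n^2$) is exactly the condition $\bar g\in\bar\n^2$, i.e. that $C$ is singular at $\bar\n$; and at such a point $C$ has embedding dimension two.

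For part (ii), $A$ is regular, so for every $\n\supseteq(t,g)$ the classes of $g$ and $t$ are linearly independent in $\n/\n^2$, i.e. $\bar g$ is a regular parameter of $(E_0/tE_0)_{\bar\n}$. Hence $C=(E_0/tE_0)/(\bar g)$ is regular at every one of its points, so $C$ is a smooth polynomial curve and $C=k^{[1]}$ by Lemma~\ref{thE}. Then $A/tA=C^{[1]}=k^{[2]}$ and $A[1/t]=K^{[2]}$, and Theorem~\ref{Sa} yields $A=R^{[2]}$.

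For part (i) the curve $C$ is genuinely singular, so $A\neq R^{[2]}$; moreover $E_0=A_n$ itself may fail to be regular---it can be singular over $V(t)\setminus V(g)$, where Lemma~\ref{b1} gives no control---so one cannot simply take $E=E_0$. Instead I would exploit the output of the computation above, namely that every singularity of $C$ is planar (embedding dimension two), to realise $C$ as a plane curve $k[p_0,q_0]/(\bar g)$, lift $p_0,q_0$ to elements $p,q\in A$, and set $E:=R[p,q]$. If this is arranged so that $E[1/t]=A[1/t]=K^{[2]}$ and $E/tE=k^{[2]}$, then Theorem~\ref{Sa} gives $E=R^{[2]}$, and it remains only to verify $tA\cap E=(t,g)E$ and $A=E[g/t]$. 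The main obstacle is precisely this construction: upgrading the local planarity of the singularities of $C$ to an honest closed embedding $C\hookrightarrow\A^2$ together with a compatible copy of $R^{[2]}$ inside $A$ realising the modification. The delicate point is that the fibre over $V(t)\setminus V(g)$ is invisible to $A$, so the regularity of $E/tE$ there must be engineered through the choice of $p,q$ rather than read off from the hypothesis on $A$.
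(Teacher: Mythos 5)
Your part (i) is where the genuine gap lies. You correctly observe that Lemma \ref{b1} only controls the maximal ideals of $E_0=A_n$ containing $(t,g)$, so the hypothesis on $A$ says nothing directly about $E_0$ over $V(t)\setminus V(g)$; but you then conclude that ``one cannot simply take $E=E_0$'' and propose instead to build a fresh $E=R[p,q]$ by globalizing the planarity of the singularities of $C$ --- a construction you yourself flag as the unresolved ``main obstacle.'' This leaves (i) unproved, and the diagnosis is in fact wrong: the paper takes exactly $E=A_n$ and shows it is $R^{[2]}$. The idea you are missing is to use one more level of the tower in Lemma \ref{BD}. Write $E/tE\cong C'[V_n]$ with $C'=A_{n-1}/(t,h_{n-1})$, so that $C=C'[V_n]/(\overline{g})$; since $k\subsetneqq C'\hookrightarrow C\hookrightarrow k[W]$, the ring $C$ is integral over $C'$ and $\overline{g}$ is (up to a unit) monic in $V_n$ over $C'$. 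By lying-over, \emph{every} maximal ideal $\n'$ of $C'$ lies under a maximal ideal $\overline{\n}$ of $E/tE$ containing $\overline{g}$, i.e.\ under a point where Lemma \ref{b1} does apply: there $t\notin\n^2$ and $E_\n$ is regular, so $(E/tE)_{\overline{\n}}$ is regular, and faithfully flat descent along $C'_{\n'}\to(E/tE)_{\overline{\n}}$ (\cite[Theorem 23.7(i)]{Mat}) makes $C'_{\n'}$ regular. Hence $C'$ is regular everywhere, so $C'=k^{[1]}$ by Theorem \ref{thE}, and $E/tE=C'[V_n]=k^{[2]}$: the regularity of $E/tE$ over $V(t)\setminus V(g)$, which you tried to ``engineer'' through a choice of $p,q$, comes for free from the cylinder structure. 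Sathaye's theorem (Theorem \ref{Sa}) then gives $E=R^{[2]}$, and the identities $E[1/t]=A[1/t]$, $tA\cap E=(t,g)E$, $A=E[g/t]$ are already supplied by Lemma \ref{BD}.

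Your part (ii), by contrast, is essentially correct and takes a genuinely different route: the paper deduces (ii) from (i) (using $E=R^{[2]}$ to present $C$ as a plane curve and then finding a singular point of $A$ when $\overline{g}\in\tilde{\n}^2$), whereas you verify regularity of $C$ locally inside the two-dimensional regular local rings $(E_0/tE_0)_{\overline{\n}}$, bypassing (i) altogether; this is a legitimate shortcut. One repair is needed: the proposition does not assume $k$ algebraically closed (it is applied to localizations of $k[T]$ for an arbitrary field $k$ of characteristic zero), so the residue field $E_0/\n$ may be a proper extension of $k$ and your test points $\m_\lambda=(\n,V-\lambda)$ with $\lambda\in k$ do not suffice: from $\overline{g}\in\overline{\n}^2$ one only gets $g\equiv te \pmod{\n^2}$ for some $e\in E_0$, and the singular point of $A$ one must exhibit is $(\n,V-e)/(tV-g)$ --- exactly the maximal ideal used in the paper's proof of (ii). With $V-\lambda$ replaced by $V-e$ throughout, your cotangent computation is sound.
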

\begin{proof} 
Let $f_0, f_1$ be as in Lemma \ref{BD}. If $A= R[f_0, f_1]$, 
then taking $E= R[f_0, f_1]$ and $g=0$, 
we will be through. So we suppose that $A\neq R[f_0, f_1]$. 
Let $E$, $g$ and $C$ be as in Lemma \ref{BD}. Then 
$E[1/t]= A[1/t]$, $tA \cap E= (t, g)E$ and $A = E[g/t]$.  

(i) We show that $E = R^{[2]}$.  If $E = R[f_0,f_1]= R^{[2]}$ then there is nothing to prove. 
So we assume that $R[f_0,f_1] \not=E$, i.e., $E = A_n$ with $n \geq 2$. 
Since $E[1/t] = {R[1/t]}^{[2]}$, by Theorem \ref{Sa}, 
it is enough to show that $E/tE= k^{[2]}$. 
By Lemma \ref{BD},  $tA \cap A_{n-1}=(t,h_{n-1})A_{n-1}$ is a prime ideal of $A_{n-1}$ and 
$E (= A_n) \cong A_{n-1}[V_n]/ (t{V_n} - h_{n-1})$.   
Set $C': = A_{n-1}/(t,h_{n-1})$. Then $E/tE \cong C'[V_n]$. 
Hence it is enough to show that $C'= k^{[1]}$. 

Note that $C= E/(t, g)= C'[V_n]/(\overline{g})$, where $\overline g$ denotes the image of $g$ in $E/tE$. 
Further, since $C=E/(t, g)$ is a polynomial curve and 
$(t,h_{n-1})A_{n-1} = tA \cap A_{n-1} = tA \cap E \cap A_{n-1}= (t, g)E \cap A_{n-1}$, we have
$$
k \subsetneqq C' \hookrightarrow C  \hookrightarrow k[W]= k^{[1]}
$$
for some $W$. Since $C'$ contains a non-constant polynomial in $W$, we have $W$ is integral over $C'$. 
In particular,  $C$ is integral over ${C'}$ and
$\overline g$ is a monic polynomial in $V_n$ with coefficients in $C'$.
Hence, given a maximal ideal $\n'$  of $C'$, there exists a maximal
ideal ${\overline  \n}$ of $E/tE (= C'^{[1]})$ containing $\overline{g}$
such that $\n' = C'\cap {\overline \n}$
so that $({E/t E})_{\overline \n}$ is a faithfully flat extension of
${C'}_{\n'}$.  Let $\n$ be the lift of $\overline{\n}$ in $E$. Then $(t, g)E \subseteq \n$. 
Since $A_\m$ is regular for all but (at most) finitely many maximal ideals $\m$ of $A$,
by Lemma \ref{b1}, we have $t \notin \n^2$ and $E_\n$ is a regular ring. 
Hence, $({E/tE})_{\overline \n}$ is a regular ring.
Therefore $C'_{\n'}$ is a regular ring (cf. \cite[Theorem 23.7(i)]{Mat}). This being true for every maximal ideal $\n'$ of $C'$,
we have $C'$ is a regular ring and hence, by Theorem \ref{thE}, $C'= k^{[1]}$. 
Hence $E= R^{[2]}$. 

(ii) We now assume further that $A$ is regular and show that $A=R^{[2]}$.
By Theorem \ref{miya},  $A[1/t] = K^{[2]}$. Hence, by 
Theorem \ref{Sa}, it is enough to prove that $A/tA = k^{[2]}$. 
Since $A/tA = C^{[1]}$ and $C$ is a polynomial curve over $k$, by Theorem \ref{thE}, 
it is enough to prove that $C$ is a regular ring.

By (i), $E= R^{[2]}$, say $E= R[X_1, X_2]$.
Thus $C = k[X_1, X_2]/(\overline g)$, where $\overline g $ is the image of $g$ in $E/tE$. 
Suppose, if possible, that  $C$ is not regular. 
Then there exists a maximal ideal ${\tilde \n}$ of $k[X_1, X_2]$ such that ${\overline g} \in {\tilde \n}^2$. 
Let $\n$ be the lift of ${\tilde \n} $ in $E (= R[X_1, X_2])$. Then $g \in tE + \n^2$.
Let $ e \in E , f \in {\n}^2  $  be such that $ g = te + f$. Therefore the
element $tV - g =t(V - e) + te -g = t(V-e)- f \in M^2$ where $M = (\n, V-e) $ is a
maximal ideal of $E[V] = R[X_1, X_2, V]$. 
Identifying A with $E[V]/(tV-g)$, we see that $\m=M/(tV-g)$
is a maximal ideal of $A$ for which $A_{\m}$ is not regular.
This is  a contradiction. Hence $C$ is
regular and therefore $A = R^{[2]}$.
\end{proof}

\begin{rem}
{\em Let the notation and hypothesis be as in Lemma \ref{BD}. Set $C_i:= A_i/Q_i$ for $1\le i\le n$. 
Following the arguments as in the proof Proposition \ref{bdd2}(i) it can be shown that 
$
C_1 \hookrightarrow C_2 \hookrightarrow \dots \hookrightarrow C_n \hookrightarrow k^{[1]}
$
and that $C_i$ is faithfully flat over $C_{i-1}$ for each $i$, $2 \le i \le n$.  
Now suppose that $A= A_{n+1} = R^{[2]}$.
Then since $A/tA(= k^{[2]})= {C_n}^{[1]}$, we have 
$C_n$ is a regular ring. Hence, $C_i$ is regular for each $i$ (cf. \cite[Theorem 23.7(i)]{Mat})
and therefore, by Theorem \ref{thE}, $C_i= k^{[1]}$ $\forall~i$, $1\le i\le n-1$.  
Hence $A_i/tA_i= {C_{i-1}}^{[1]}= k^{[2]}$ and since $A_i[1/t]= K^{[2]}$, by  
Theorem \ref{Sa} we have $A_i= R^{[2]}$ for each $i$, $2\le i \le n$.
}
\end{rem}

We now prove Theorem I. 

\begin{thm}\label{t5}
Let $k$ be a field of characteristic zero, $D(\neq 0)\in \lnd_{k[T]}(k[T, X, Y, Z])$ and $A=$ker$(D)$.
Suppose that $A$ is regular. 
Then $A= k[T]^{[2]}$.  In particular,  
all finitely generated projective modules over $A$ are free. 
\end{thm}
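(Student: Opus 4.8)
The plan is to reduce the global statement to the local one already proved in Proposition \ref{bdd2}(ii) by localizing the base ring $k[T]$ at each of its maximal ideals, and then to reassemble the local information via the Bass--Connell--Wright theorem (Theorem \ref{bcw}). Throughout we view $B=k[T,X,Y,Z]=k[T][X,Y,Z]$ as a polynomial ring in $X,Y,Z$ over the PID $k[T]$.

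First I would record two facts. Since $D$ is $k[T]$-linear, we have $k[T]\subseteq \ker(D)=A$; and by \cite{BD}, $A$ is a finitely generated $k[T]$-algebra (this is what allows Theorem \ref{bcw} to apply at the end). Now fix a maximal ideal $\m$ of $k[T]$ and set $R:=k[T]_\m$, a DVR whose maximal ideal is generated by a uniformizer $t$; since $\operatorname{char}k=0$, we have $\BQ\subseteq R$. Put $S:=k[T]\setminus\m$. Because $D$ is $k[T]$-linear, every element of $S$ lies in $\ker(D)$, so $D$ extends to the $R$-linear derivation $S^{-1}D$ of $S^{-1}B=R[X,Y,Z]$; this extension is again locally nilpotent, as $(S^{-1}D)^n(b/s)=D^n(b)/s$ for $s\in S$. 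Flatness of localization gives $\ker(S^{-1}D)=S^{-1}\ker(D)=A_\m$, so $A_\m$ is exactly the kernel of the $R$-linear locally nilpotent derivation $S^{-1}D$ of $R[X,Y,Z]$. Moreover $S^{-1}D\neq 0$: if $D\neq 0$ it is nonzero on one of $X,Y,Z$, and a nonzero element of the domain $B$ stays nonzero after localization.

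Next, since $A$ is regular, so is its localization $A_\m$ (localizations of regular rings are regular). Thus $A_\m$ is the regular kernel of a nonzero $R$-linear locally nilpotent derivation of $R[X,Y,Z]$, with $(R,t)$ a DVR containing $\BQ$; hence Proposition \ref{bdd2}(ii) applies and yields
\[
A_\m=R^{[2]}=(k[T]_\m)^{[2]}.
\]
As $\m$ was an arbitrary maximal ideal of $k[T]$, this localization holds for every $\m$. Finally, $A$ is a finitely generated algebra over the PID $k[T]$ with $A_\m=(k[T]_\m)^{[2]}$ for all maximal ideals $\m$, so Theorem \ref{bcw} gives $A=k[T]^{[2]}\;(\cong k^{[3]})$. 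The ``in particular'' then follows from the Quillen--Suslin theorem (\cite{Q}, \cite{Su}): every finitely generated projective module over a polynomial ring over a field is free.

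I expect no serious obstacle here, since the substantive work has been done in Proposition \ref{bdd2}(ii); the argument is a clean localize--globalize passage. The only points deserving care are the compatibilities of localization, namely that it commutes with the kernel and preserves both local nilpotence and regularity, and that the passage from the local conclusion to the global one is legitimate. The latter is precisely the content of Theorem \ref{bcw}, and its hypotheses are met because of the finite generation of $A$ over $k[T]$ supplied by \cite{BD}.
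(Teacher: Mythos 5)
Your proof is correct and takes essentially the same approach as the paper's own proof: localize at each maximal ideal of the PID $k[T]$ to obtain a DVR containing $\BQ$, note that the induced derivation on $k[T]_\m[X,Y,Z]$ is locally nilpotent with kernel $A_\m$, apply Proposition \ref{bdd2}(ii) to get $A_\m=(k[T]_\m)^{[2]}$, and then globalize via Theorem \ref{bcw} and conclude with Quillen--Suslin. Your explicit verifications of the hypotheses the paper leaves implicit (finite generation of $A$ over $k[T]$ via \cite{BD}, compatibility of localization with the kernel, and regularity of $A_\m$) are correct and only add care, not new content.
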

\begin{proof}
Set $R:= k[T]$ and $B: = k[T, X, Y, Z]$. Let $P$ be a maximal ideal of $R$ and $S=R \setminus P$. 
Set $R_P:= S^{-1}R$, $B_P:= S^{-1}B$ and $A_P:= S^{-1}A$.   Then $R_P$ is  a DVR.  
Since $D(T)= 0$,  $D$ induces an $R_P$-linear locally nilpotent derivation $D_P$
of $B_P$  with kernel $A_P$. Hence, by Proposition \ref{bdd2}(ii), $A_P= {R_P}^{[2]}$. 
Therefore, since $R$ is a PID, by Theorem \ref{bcw}, $A=  R^{[2]}= k[T]^{[2]}$. 
Thus, by the Quillen-Suslin Theorem (cf. \cite{Q},\cite{Su}), 
all finitely generated projective $A$-modules are free. 
\end{proof}

\begin{rem}\label{fr}
{\em
(i) Theorem \ref{t5} shows that if 
$D$ is a triangular derivation of $k^{[4]}$ and 
ker$(D)$ is a regular ring then ker$(D)$ is a polynomial ring. 
This result does not extend to $k^{[5]}$. 

Consider Winkelmann's derivation (\cite[3.9.5, p. 75]{F}) 
$D: k[X,Y,U,V,Z] \to  k[X,Y,U,V,Z]$ defined by 
$$
D(X)=D(Y)=0, \,  D(U)= Y, \,  D(V)= X \text{~and~} D(Z)= 1+ XU-YV.
$$ 
Then $A:= {\rm ker}(D)= k[X,Y, f, g, h]$, where 
$f = XU-YV$, $g= YZ-(1+f)U$ and $h= XZ-(1+f)V$. 
It is easy to see that $A \cong_k k[X,Y,F,G,H]/(YH-XG-(1+F)F)$
which is a regular ring but not a 
polynomial ring. In fact, N. Mohan Kumar and M. Nori proved that
$\widetilde{ K}_0(A)  = {\BZ}$ (cf. \cite[Lemma 17.2 and Corollary 17.3]{Sw}), 
showing that there exists a projective
$A$-module which is not even stably free.

(ii) Consider the Russell-Koras cubic $A=\BC[X, Y, Z, T]/(X^2Y+X+Z^2+T^3)$.
L.~Makar-Limanov has shown (cf. \cite[Theorem 9.6]{F}) that $A \neq \BC^{[3]}$ and 
S. Kaliman has asked (\cite[Section 11.11]{F}) whether $A^{[1]}= \BC^{[4]}$.  
Now, for the affine $3$-space ${\mathbb A}^3_k$, while the Zariski Cancellation Problem
has a negative solution when ${\rm ch}~k >0$ (\cite{G}), 
it is still open when ${\rm ch}~k=0$. 
If $A^{[1]}= \BC^{[4]}$, then the Russell-Koras cubic $A$ will give a 
negative solution to the Zariski Cancellation Problem even in  characteristic zero.  

In this connection G. Freudenburg posed a stronger question (\cite[Section 11.11]{F}):
whether the Russell-Koras cubic $A$ is the kernel 
of a locally nilpotent derivation of $\BC^{[4]}$. 
Theorem \ref{t5} shows that the Russell-Koras cubic $A$, being regular but not $\BC^{[3]}$,
cannot be isomorphic to the kernel of any locally nilpotent 
derivation $D$ of $\BC[X_1, X_2,X_3,X_4]$ which annihilates a variable of $\BC[X_1, X_2,X_3,X_4]$.
}
\end{rem}

We shall now describe a structure of the kernel of a  $k[X_1]$-linear locally nilpotent derivation of $k[X_1, \dots, X_4]$
when it has only isolated singularities. We first state a lemma which is a semilocal version of Proposition \ref{bdd2}(i).

\begin{lem}\label{bl3}
Let $R$ be  be a semilocal PID containing ${\BQ}$.
Let $\{t_1 R, \dots, t_nR \}$ be the set of all maximal ideals of $R$ and $t = t_1 \cdots t_n$. 
Let $D(\neq 0) \in \lnd_R(R[U,V,W])$ and $A=$ ker$(D)$.   
Suppose that $A$  has only isolated singularities. 
Then there exists a subring $E$ of $A$ and an element $f \in E$ such that $E = R^{[2]}$, 
$A[1/t]=E[1/t]$, $t A \cap E = (t,f)E$ and $A =  E[f/t]$.
\end{lem}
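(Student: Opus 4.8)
The plan is to reduce to the already-proved DVR case (Proposition \ref{bdd2}(i)) by localizing $R$ at each of its maximal ideals, and then to glue the resulting local data into a single global subring $E=R^{[2]}$ by means of the local--global theorem of Bass--Connell--Wright (Theorem \ref{bcw}). I begin by recording how inverting $t$ affects $R$: since $R$ is a semilocal PID whose only maximal ideals are $t_1R,\dots,t_nR$, every nonzero prime contains some $t_i$, so the only prime surviving in $R[1/t]$ is $(0)$; hence $R[1/t]=K$, the field of fractions of $R$. As $K$ is flat over $R$ and the kernel of a derivation commutes with flat base change, $A[1/t]=A\otimes_R K=\ker(D\otimes_R K)$, which equals $K^{[2]}$ by Miyanishi's theorem (Theorem \ref{miya}). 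Clearing denominators, I can then choose $f_0,f_1\in A$ with $A[1/t]=R[1/t][f_0,f_1]=K^{[2]}$, exactly as in the discussion preceding Lemma \ref{BD}.

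Next, for each $i$ I localize at $P_i=t_iR$. Then $(R_{P_i},t_i)$ is a DVR, $D$ induces an $R_{P_i}$-linear locally nilpotent derivation with kernel $A_{P_i}=A\otimes_R R_{P_i}$, and $A_{P_i}$ still has only isolated singularities. Proposition \ref{bdd2}(i) therefore yields a subring $E^{(i)}=R_{P_i}^{[2]}$ of $A_{P_i}$ and an element $g^{(i)}\in E^{(i)}$ with $E^{(i)}[1/t_i]=A_{P_i}[1/t_i]$, $t_iA_{P_i}\cap E^{(i)}=(t_i,g^{(i)})E^{(i)}$ and $A_{P_i}=E^{(i)}[g^{(i)}/t_i]$; in particular $E^{(i)}/t_iE^{(i)}=k_i^{[2]}$ and $A_{P_i}/t_iA_{P_i}=k_i^{[2]}$. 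Since $A$ is factorially closed in $R[U,V,W]$, each $t_i$ is a prime of the UFD $A$; as the $t_iR$ are pairwise comaximal in $R$, the primes $t_iA$ are pairwise comaximal and pairwise non-associate, so $tA=\bigcap_i t_iA$ and the Chinese Remainder Theorem gives $A/tA\cong\prod_i A/t_iA\cong\prod_i k_i^{[2]}$.

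The crux of the argument, and the step I expect to be the main obstacle, is to assemble the local pieces $(E^{(i)},g^{(i)})$ into a single finitely generated $R$-subalgebra $E\subseteq A$ together with one element $f\in E$ whose localization at each $P_i$ reproduces $(E^{(i)},g^{(i)})$. I would do this by an approximation / Chinese-remainder argument inside $A[1/t]=K^{[2]}$: choose $a,b\in A$ agreeing with chosen polynomial generators of $E^{(i)}$ modulo a sufficiently high power of $t_i$ for every $i$, put $E:=R[a,b]$, and choose $f\in E$ matching $g^{(i)}$ modulo high powers of $t_i$. The delicate point is to make the approximation sharp enough that $E_{P_i}=E^{(i)}$ and $(E[f/t])_{P_i}=A_{P_i}$ hold \emph{exactly}, not merely to high order; here one uses that $A_{P_i}=E^{(i)}[g^{(i)}/t_i]$ has bounded $g^{(i)}$-degree in each fixed element, so a large enough power of $t_i$ absorbs the discrepancy. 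Having arranged $E_{P_i}=E^{(i)}=R_{P_i}^{[2]}$ at each of the finitely many maximal ideals of $R$, Theorem \ref{bcw} immediately gives $E=R^{[2]}$.

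It then remains to verify the three identities. The equality $A[1/t]=E[1/t]$ is clear, since $R[f_0,f_1]\subseteq E$ forces $A[1/t]=K^{[2]}=R[1/t][f_0,f_1]\subseteq E[1/t]\subseteq A[1/t]$. Both $tA\cap E=(t,f)E$ and $A=E[f/t]$ are then checked after localizing: at each $P_i$ they become the conclusions of Proposition \ref{bdd2}(i) (using $t=t_i\cdot(\text{unit})$ in $R_{P_i}$), and after inverting $t$ both sides coincide with $A[1/t]$. For the last identity I would apply Lemma \ref{lemlast} with the nonzerodivisor $t$: one has $E[f/t]\subseteq A$, $\big(E[f/t]\big)[1/t]=E[1/t]=A[1/t]$, and the induced map $E[f/t]/t\,E[f/t]\to A/tA$ is injective because it is injective factor by factor on $\prod_i k_i^{[2]}$ by the exact local matchings above; hence $E[f/t]=A$. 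The semilocality of $R$ and the comaximality of the $t_i$ are what make this globalization of the essentially local Proposition \ref{bdd2}(i) possible, and reconciling the local changes of variable globally is the part requiring the most care.
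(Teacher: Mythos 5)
Your overall strategy (reduce to the DVR case of Proposition \ref{bdd2}(i), then globalize via Theorem \ref{bcw}) matches the paper's in spirit, but the step you yourself flag as the crux --- constructing $E=R[a,b]$ from elements $a,b\in A$ that agree with the local generators $a_i,b_i$ of $E^{(i)}$ modulo a high power of $t_i$, and claiming that for $N$ large the \emph{exact} equality $E_{P_i}=E^{(i)}$ follows --- is a genuine gap, and the justification you offer does not close it. Your bounded-degree remark (every element of $A_{P_i}=E^{(i)}[g^{(i)}/t_i]$ has bounded degree in $g^{(i)}/t_i$) at best addresses the containment $a,b\in E^{(i)}$, i.e.\ $R_{P_i}[a,b]\subseteq E^{(i)}$; it says nothing about the reverse containment, namely that the perturbed elements still \emph{generate} $E^{(i)}$ over $R_{P_i}$, and that is false in general. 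Concretely, if $(R_{P_i},t_i)$ is a DVR and $E^{(i)}=R_{P_i}[a_i,b_i]=R_{P_i}^{[2]}$, take $a=a_i+t_i^{N}a_i^{2}$, which is congruent to $a_i$ modulo $t_i^{N}A_{P_i}$ (indeed modulo $t_i^{N}E^{(i)}$). Then $a_i\notin R_{P_i}[a,b_i]$: if $a_i=P(a,b_i)$ with $\deg_u P=D\ge 1$, the coefficient of $a_i^{2D}$ in $P(a,b_i)$ is $t_i^{ND}\sum_k c_{Dk}b_i^k\neq 0$, so $\deg_{a_i}P(a,b_i)=2D\ge 2$, a contradiction; and $D=0$ is absurd. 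Hence $R_{P_i}[a,b_i]\subsetneq E^{(i)}$ no matter how large $N$ is: $t_i$-adic closeness of generators does not force equality of the generated subrings, and for that one would need the perturbation to be implemented by an $R_{P_i}$-algebra automorphism, which a Chinese-remainder lift does not provide. There are secondary problems as well: even the containment argument is circular (the required power $N$ must dominate the $g^{(i)}/t_i$-degree of the correction $c_i=(a-a_i)/t_i^{N}$, but $c_i$ is only determined after $a$, hence after $N$, is chosen), and your verification of $A[1/t]=E[1/t]$ invokes $R[f_0,f_1]\subseteq E$, which your construction never arranges.

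The paper avoids gluing ring generators altogether. It inducts on the number $n$ of maximal ideals: writing $t=t_1t'$ with $t'=t_2\cdots t_n$, it applies the induction hypothesis over the semilocal PID $R[1/t_1]$ and Proposition \ref{bdd2}(i) over the DVR $R[1/t']$ to obtain pairs $(E_1,f_1)$ and $(E_2,f_2)$, and then simply sets $E:=E_1\cap E_2$, a subring of $A=A[1/t_1]\cap A[1/t']$. The equalities $E[1/t_1]=E_1$ and $E[1/t']=E_2$ come for free, because $E_2[1/t_1]=A_2[1/t_1]=A[1/t]\supseteq E_1$ and symmetrically; Theorem \ref{bcw} then gives $E=R^{[2]}$ with no approximation needed. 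The Chinese Remainder Theorem enters only to produce the single element $f$, via $E/tE\cong E[1/t']/(t_1)\times E[1/t_1]/(t')$ --- and gluing the \emph{ideal} $tA\cap E=(t,f)E$ from its localizations is legitimate, unlike gluing systems of polynomial generators. Replacing your approximation step by this intersection-plus-induction device is precisely the missing idea.
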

\begin{proof}
We prove the result by induction on $n$. 
If $n =  1$, then we are through by Proposition \ref{bdd2}(i).
We assume that $n \geq 2 $. 

Let $t' = t_2\cdots t_n$, $ R_1 = R[1/t_1]$ and $A_1 =  A[1/t_1]$. 
Then $R_1$ is a semilocal PID having $n-1$ maximal ideals. 
Hence, by induction hypothesis, there exists a subring 
$E_1$ of  $A_1$ and an element $f_1 \in E_1$ 
such that $E_1 = R_1^{[2]}$, $E_1[1/t']=A_1[1/t']$, 
${t'}A_1 \cap E_1 =({t'}, f_1)E_1$ and $A_1 = E_1[f_1/{t'}]$.

Let $R_2 = R[1/{t'}]$ and $A_2 = A[1/{t'}]$. Then $R_2$ is
DVR and hence, by Proposition \ref{bdd2}(i), 
there exists a subring $E_2$ of $A_2$ and an element $f_2 \in E_2$ such that 
$E_2 = R_2^{[2]}$, $E_2[1/t_1]= A_2[1/t_1]$, $t_1A_2 \cap E_2 = (t_1, f_2)E_2$ and
$A_2 = E_2[f_2/t_1]$.

Let $E = E_1 \cap E_2$. Since $(t_1, t')R=R$, we have $A = A_1 \cap A_2$. 
Hence $E$ is a subring of $A$.
Since $E[1/t_1] = E_1[1/t_1] \cap E_2[1/t_1]$, $E_1[1/t_1] = E_1$ and
$E_2[1/t_1] = A_2[1/t_1] = A[1/t_1{t'}] (\supseteq E_1)$, we have 
$E[1/t_1] = E_1= R_1^{[2]}$. Similarly, $E[1/{t'}] = E_2 = R_2^{[2]}$.  
Hence, by Theorem \ref{bcw}, $E = R^{[2]}$ as $(t_1, t')R = R$. 

Note that $E_1[1/t] = E_1[1/t'] = A_1[1/{t'}] = A[1/t]$. Similarly, 
$E_2[1/t] =  A[1/t]$. Hence, $E[1/t] = E_1[1/t] \cap E_2[1/t] = A[1/t]$. 

Let $ I := tA \cap E$. Note that $IE[1/t_1] = tA[1/t_1] \cap  E[1/t_1] = {t'} A_1 \cap  E_1 = (t',f_1)E_1 = (t,f_1)E[1/t_1]$. 
Similarly $I E[1/t'] = (t, f_2)E[1/t']$. Since $(t_1, t')E = E$, we have 
$E/tE \cong E/t_1E \times E/t'E = E[1/t']/(t_1) \times E[1/t_1]/(t')$.   
Let $f \in E$ be a lift of the preimage of the element $(\bar{f_2}, \bar{f_1}) \in E[1/t']/(t_1) \times E[1/t_1]/(t')$.  
Then $I = (t,f)E$. 

Set $A_0 := E[f/t]$. Then $A_0 \subseteq A$, $A_0[1/t_1]= A[1/t_1]$ and $A_0[1/t'] = A[1/t']$. 
Hence $A =A_0 = E[f/t]$.
\end{proof}

\begin{thm}\label{tr1}
Let $k$ be an algebraically closed field of characteristic zero. 
Let $D(\neq 0) \in \lnd_{k[X]}(k[X, X_2, X_3, X_4])$ and $A= $ker$(D)$. 
Suppose that $A$ has only isolated singularities. Then 
there exist distinct elements $\lambda_1, \dots, \lambda_r \in k$, $r \ge 1$, such that
$$
A\cong_{k[X]} k[X,Y, Z, T]/(a(X)Y-F(X, Z, T)) 
$$
where $a(X) = (X-\lambda_1) \cdots (X-\lambda_r)$ and 
$k[Z,T]/(F(\lambda_i, Z,T))$  is a polynomial curve 
for each  $i$, $1\le i \le r$. 
\end{thm}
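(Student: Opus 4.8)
The plan is to determine, point by point over the line $\Spec k[X]$, where $A$ fails to be a polynomial ring, and then to glue the local descriptions into a single global equation. Write $R=k[X]$ and $B=R[X_2,X_3,X_4]=R^{[3]}$; since $D(X)=0$, $D$ is $R$-linear and $A=\ker D$ is factorially closed in $B$ (hence a UFD with $A^*=k^*$) and is finitely generated over $k$ by \cite{BD}. As every maximal ideal of $A$ contracts (via the Nullstellensatz) to a maximal ideal $(X-\lambda)$ of $R$, I would first argue that the ``bad'' points are finite in number: for $\lambda\in k$ the localization $A_{(X-\lambda)}$ is the kernel of an $R_{(X-\lambda)}$-linear locally nilpotent derivation of $R_{(X-\lambda)}^{[3]}$, so by Proposition \ref{bdd2}(ii) it equals $R_{(X-\lambda)}^{[2]}$ as soon as it is regular; since $A$ has only finitely many singular maximal ideals and each lies over some $(X-\lambda)$, only finitely many $\lambda$, say $\lambda_1,\dots,\lambda_r$, give a non-regular $A_{(X-\lambda)}$. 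I may assume $A$ is singular (otherwise $A=R^{[2]}$ by Theorem \ref{t5}), so $r\ge 1$; set $a(X)=\prod_{i=1}^r(X-\lambda_i)$.

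Next I would produce the two local pieces. Away from the bad locus, $A[1/a]$ is a finitely generated algebra over the PID $R[1/a]$ with $(A[1/a])_P=A_P=R_P^{[2]}$ at every maximal ideal $P$ (as $a\notin P$), so Theorem \ref{bcw} gives $A[1/a]=R[1/a]^{[2]}$. Near the bad locus, let $S=R\setminus\bigcup_i(X-\lambda_i)$ and $R'=S^{-1}R$, a semilocal PID whose maximal ideals are the $(X-\lambda_i)R'$ with product $a$; then $A':=S^{-1}A$ is the kernel of the induced $R'$-linear derivation and still has isolated singularities, so the semilocal Lemma \ref{bl3} furnishes a subring $E'=R'^{[2]}$ of $A'$ and $f\in E'$ with $A'[1/a]=E'[1/a]$, $aA'\cap E'=(a,f)E'$ and $A'=E'[f/a]$.

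The heart of the argument is the globalization. Since $A$ is torsion-free, hence flat, over the PID $R$, and $\{R[1/a],R'\}$ is a Zariski cover of $\Spec R$ with $R=R[1/a]\cap R'$, I would set $E:=A[1/a]\cap E'$ inside $A\otimes_R k(X)=k(X)^{[2]}$ and check, by localizing, that $E[1/a]=A[1/a]=R[1/a]^{[2]}$ and $S^{-1}E=E'=R'^{[2]}$; consequently $E_P=R_P^{[2]}$ for every maximal ideal $P$ of $R$, and once one confirms that $E$ is a finitely generated $R$-algebra (by patching generators of the two pieces, exactly as in Lemma \ref{bl3}), Theorem \ref{bcw} yields $E=R^{[2]}=k[X,Z,T]$. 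To extract the relation, consider $I=aA\cap E$: it contains $aE$, and because the $\lambda_i$ are distinct, $E/aE\cong\prod_{i=1}^r k[Z,T]$ by the Chinese Remainder Theorem; the two localizations show $I[1/a]=E[1/a]$ and $S^{-1}I=(a,f)E'$, so in each factor $E/(X-\lambda_i)E=k[Z,T]$ the image of $I$ is the principal ideal generated by $f\bmod(X-\lambda_i)$. Choosing, again by CRT, an $F\in k[X,Z,T]$ with $F\equiv f\pmod{(X-\lambda_i)E'}$ for all $i$, I get $I=(a,F)E$ and $F/a\in A$; then $A_0:=E[F/a]$ satisfies $A_0[1/a]=A[1/a]$ and $S^{-1}A_0=E'[F/a]=E'[f/a]=A'$ (since $F-f\in aE'$), whence $A_0=A$ by the cover. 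Putting $Y=F/a$ gives $A\cong_{k[X]}k[X,Y,Z,T]/(a(X)Y-F(X,Z,T))$.

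Finally, the polynomial-curve condition falls out of the equation. Reducing modulo $X-\lambda_i$ (a prime of $A$, by factorial closedness) kills $a$, so $A/(X-\lambda_i)A\cong(k[Z,T]/(F(\lambda_i,Z,T)))^{[1]}$; factorial closedness also gives $(X-\lambda_i)B\cap A=(X-\lambda_i)A$, so this ring embeds in $B/(X-\lambda_i)B=k^{[3]}$, forcing $C_i:=k[Z,T]/(F(\lambda_i,Z,T))\hookrightarrow k^{[3]}$ and hence $C_i\hookrightarrow k^{[1]}$ by \cite{E}, i.e.\ each $C_i$ is a polynomial curve. I expect the main obstacle to be the globalization step: assembling the ``away'' piece $A[1/a]=R[1/a]^{[2]}$ and the semilocal piece $E'=R'^{[2]}$ into a single $R^{[2]}$ with one global coordinate ring $k[X,Z,T]$ and a single defining polynomial $F$ — in particular, verifying that $E=A[1/a]\cap E'$ is finitely generated over $R$ so that Theorem \ref{bcw} applies, and that $F$ can be chosen globally compatibly with the semilocal $f$ via the Chinese Remainder Theorem.
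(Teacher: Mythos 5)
Your proposal is correct and follows essentially the same route as the paper's proof: localize at the primes $(X-\lambda_i)$ lying below the singular locus, invoke the semilocal Lemma \ref{bl3} to get $E'={R'}^{[2]}$ with $A'=E'[f/a]$, glue with $A[1/a]$ by intersecting and applying Theorem \ref{bcw} to obtain a global $E=k[X,Z,T]$, extract $F$ with $aA\cap E=(a,F)E$ so that $A=E[F/a]\cong k[X,Y,Z,T]/(a(X)Y-F)$, and use factorial closedness of $A$ in $B$ to embed each $k[Z,T]/(F(\lambda_i,Z,T))$ into $k^{[3]}$, hence into $k^{[1]}$. The only (harmless) deviations are that you pin down the $\lambda_i$ via the singular fibers and recover $A[1/a]=R[1/a]^{[2]}$ from Proposition \ref{bdd2}(ii) together with Theorem \ref{bcw}, whereas the paper takes $a(X)$ of least degree with $A[1/a(X)]=k[X,1/a(X)]^{[2]}$, and that you make explicit the Chinese Remainder argument producing the global $F$, which the paper leaves implicit.
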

\begin{proof}
$D$ extends to a locally nilpotent derivation of $k(X)[X_2, X_3, X_4]$
and hence $A \otimes_{k[X]} k(X) = k(X)^{[2]}$ by Theorem \ref{miya}. 
Since ker$(D)$ is finitely generated by \cite[Proposition 4.13]{BD}, 
there exists a monic polynomial $a(X) \in k[X]$ of least possible degree
such that $A[1/a(X)]= k[X, 1/a(X)]^{[2]}$. 
Let $a(X)= (X-\lambda_1) \cdots (X-\lambda_r)$ be a prime factorization of $a$.
Since $a$ has been chosen to be of least possible degree, 
$\lambda_i \neq \lambda_j$ for $i \neq j$. 

Set $R: =k[X]$, $t:= a(X)$ and $t_i:= X-\lambda_i$, $1\le i\le r$;
$B:= k[X, X_2, X_3, X_4]$, $S= R\setminus \bigcup_{i=1}^r t_iR$, 
$R_S:= S^{-1}R$,  $A_S:= S^{-1}A$ and  $B_S:= S^{-1}B=R_S[X_2, X_3, X_4]$. 
Then $R_S$ is a semilocal PID and $D$ induces an $R_S$-linear 
locally nilpotent derivation $D_S$ of $B_S$ with kernel $A_S$. 
Hence, by Lemma \ref{bl3}, there exists a subring $E_S$ of $A_S$ and $f_1 \in E_S$ such that 
$E_S = {R_S}^{[2]}$, $A_S[1/t]=E_S[1/t]$, $t A_S \cap E_S = (t,f_1)E_S$ and $A_S =  E_S[f_1/t]$.

Let $E=E_S \cap A$.  Then, since $A_S[1/t]=E_S[1/t]$, we have, 
$E[1/t] = E_S[1/t] \cap A[1/t] = A[1/t]= R[1/t]^{[2]}$. 
Also $S^{-1} E = S^{-1}E_S \cap S^{-1} A=  E_S = {R_S}^{[2]}$.
Hence, by Theorem \ref{bcw}, $E = R^{[2]}=k[X]^{[2]}$, say $E= k[X][Z,T]$ for some $Z, T\in E$.
 
Since $tA_S \cap A= tA$, we have, $tA \cap E= tA_S \cap E_S \cap A= (t, f_1)E_S \cap E = (t, f_1)S^{-1}E \cap E$.  
Hence $tA \cap E$ is an ideal in $E= k[X,Z, T]$ of height two containing $t$, i.e.,  
$tA \cap E= (t, F)E$ for some $F \in k[X,Z,T]$. Then $(t, F)E_S = (t, f_1)E_S$.

Set $A_0 := k[X,Z,T, F/t] = E[F/t]$. Then $A_0 \subseteq A$, 
$A_0[1/t]= E[1/t] = A[1/t]$ and $S^{-1} A_0 = S^{-1}E[F/t]= E_S[F/t]= E_S[f_1/t]= S^{-1}A$
as $(t, F)E_S = (t, f_1)E_S$. 
Therefore, since $A = A[1/t]\cap S^{-1}A$ and $A_0 = A_0[1/t]\cap S^{-1}A_0$, we have 
$$
A= A_0 \cong_{k[X]}k[X,Y, Z, T]/(a(X)Y-F(X, Z, T)).
$$ 
Since $tA \cap E= (t, F)$, we have $A/tA= k[X,Z,T, Y]/(t, F(X,Z,T))$ and hence 
$A/(X-\lambda_i) = (k[Z,T]/(F(\lambda_i, Z,T)))[Y]$ for each $i$, $1\le i \le r$. 
As $A$ is a factorially closed subring of $B$ (cf. \cite[pg. 22]{F}), 
we have $(X-\lambda_i)B \cap A= (X-\lambda_i)A$ and hence inclusions
$k[Z,T]/(F(\lambda_i, Z,T)) \hookrightarrow A/(X-\lambda_i) \hookrightarrow k[X_2,X_3, X_4]$. Thus, 
$k[Z,T]/(F(\lambda_i, Z,T))$  is a polynomial curve for each  $i$, $1\le i \le r$. 
Hence the result. 
\end{proof}

\begin{rem}
{\em 
Let $A$ be the kernel of a  $k[X_1]$-linear locally nilpotent derivation $D$ of $k[X_1, \dots, X_4]$. 
The first author and Daigle have shown (\cite{BD}) that $A$ is finitely generated. 
Theorem \ref{t5} shows that if $A$ is regular then $A$ is generated by three elements and 
Theorem \ref{tr1} shows that if $A$ has only isolated singularities then $A$ is generated by four elements. 
Recall that Daigle and Freudenburg have exhibited examples (\cite{DF}) to 
show that in general $A$ may require arbitrary number of generators.
}
\end{rem}


\section{Theorem II}\label{g0g1}
In this section, we shall prove Theorem II 
deducing it from a general result (Proposition \ref{propa}). 

Recall that we call an affine one-dimensional domain $E$ over a 
field $k$ an affine rational curve if its field of fractions 
is isomorphic to $k(W)$ and  a polynomial curve if $E\hookrightarrow k[W]$ 
for some $W$ transcendental over $k$. 
We now show that for an affine rational curve $C$ with the normalisation 
$\overline{C}$, $G_0(\overline{C})=G_0(C)$ and $G_1(\overline{C})=G_1(C)$. 

\begin{lem}\label{lemg}
Let $C$ be an affine rational curve over an algebraically closed field $k$ and
$\overline{C}$ denote the normalisation of $C$. Then
\begin{enumerate}
\item [\rm (i)] The group homomorphism $\phi:G_0(\overline{C})\to G_0(C)$ defined by
$\phi([M])=[M]$ is an isomorphism. Hence $G_0(C)$ is a free cyclic group generated by the class $[C]$. 
\item [\rm (ii)] The group homomorphism  $\psi:G_1(\overline{C})\to G_1(C)$ defined by 
$\psi([M,\alpha])=[M, \alpha]$ is an isomorphism. Hence $G_1(C)= \overline{C}^*$. 
\end{enumerate}
\end{lem}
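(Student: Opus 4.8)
The maps $\phi$ and $\psi$ are the transfer (restriction of scalars) homomorphisms of Remark \ref{ktheory}(v), available because the normalisation $\overline{C}$ is a finite $C$-module; the plan is to compute both sides and show these transfers are isomorphisms. Since $k$ is algebraically closed and $C$ is rational, $\overline{C}$ is a smooth affine rational curve, hence $\overline{C}\cong k[X,1/f(X)]$ for some $f(X)\in k[X]$ by the results recalled in Section \ref{prelim}. In particular $\overline{C}$ is a localisation of the PID $k[X]$, so it is regular and itself a PID; by Remark \ref{ktheory}(ix), $G_i(\overline{C})=K_i(\overline{C})$, and since a PID has $K_0=\BZ$ (generated by the free module) and $SK_1=0$, we get $G_0(\overline{C})=\BZ\,[\overline{C}]$ and $G_1(\overline{C})=\overline{C}^{*}$. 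Thus the two ``Hence'' assertions follow once $\phi$ and $\psi$ are shown to be isomorphisms, and this is where I would concentrate.

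To compare $C$ with $\overline{C}$ I would use the conductor $\mathfrak{c}=\{x\in\overline{C}\mid x\overline{C}\subseteq C\}$, a nonzero ideal of both rings. Fixing $0\neq s\in\mathfrak{c}$, the inclusion $s\overline{C}\subseteq C$ gives $C[1/s]=\overline{C}[1/s]=:D$, which, being a localisation of the PID $\overline{C}$, is a regular PID with $G_i(D)=K_i(D)$ and $G_1(D)=D^{*}$. Applying the localisation sequence of Remark \ref{ktheory}(vii) to the nonzerodivisor $s$ over both $C$ and $\overline{C}$ produces two six-term exact sequences sharing the terms $G_i(D)$; the finite inclusions $C\hookrightarrow\overline{C}$ and $C/sC\hookrightarrow\overline{C}/s\overline{C}$ induce, via Remark \ref{ktheory}(v), vertical transfer maps that form a commutative ladder between the two sequences and restrict to the identity on the $G_i(D)$ terms.

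The quotients $C/sC$ and $\overline{C}/s\overline{C}$ are Artinian; as $k$ is algebraically closed every residue field is $k$, so devissage gives $G_0$ free abelian on the finitely many maximal ideals and $G_1$ a corresponding sum of copies of $k^{*}$, with the transfer inducing the pushforward of cycles along the finite surjection $\Spec\overline{C}\to\Spec C$. For the top row, $G_0(\overline{C})\to G_0(D)$ is the isomorphism $\BZ\to\BZ$, which forces the boundary $G_1(D)\to G_0(\overline{C}/s\overline{C})$ to be surjective; transferring, the boundary $G_1(D)\to G_0(C/sC)$ is surjective as well, so $G_0(C/sC)\to G_0(C)$ is zero and the rank map $G_0(C)\to\BZ$ is an isomorphism. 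Since $\overline{C}$ and $C$ share a fraction field, $[\overline{C}]$ has rank one as a $C$-module, so $\phi$ carries the generator $[\overline{C}]$ to $[C]$; hence $\phi$ is an isomorphism and $G_0(C)=\BZ\,[C]$, proving (i).

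For (ii), injectivity of $\psi$ is immediate: if $\psi(a)=0$ then its image in $D^{*}$ vanishes, but that image equals the image of $a$ under the injection $\overline{C}^{*}=G_1(\overline{C})\hookrightarrow D^{*}$, so $a=0$. For surjectivity, $G_1(\overline{C})\hookrightarrow D^{*}$ forces $G_1(\overline{C}/s\overline{C})\to G_1(\overline{C})$ to be zero; transferring and using surjectivity of the $G_1$-transfer on the Artinian quotients shows $G_1(C/sC)\to G_1(C)$ is also zero, so $G_1(C)\hookrightarrow D^{*}$ with image $\ker\!\big(G_1(D)\to G_0(C/sC)\big)$, while $\operatorname{im}\psi$ corresponds to $\ker\!\big(G_1(D)\to G_0(\overline{C}/s\overline{C})\big)$. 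Thus $\psi$ is an isomorphism exactly when these two kernels coincide in $D^{*}$. The inclusion of the $\overline{C}$-kernel into the $C$-kernel is automatic from the ladder, and the reverse inclusion is \emph{the main obstacle}: it is equivalent to injectivity on $G_0$ of the Artinian transfer $G_0(\overline{C}/s\overline{C})\to G_0(C/sC)$, i.e. to the statement that a unit of $D$ with trivial total order at each closed point of $C$ already has trivial order at each point of $\overline{C}$ above it. I would settle this by interpreting both boundary maps as divisor maps and analysing the local length at the finitely many singular points of $C$; this is the only step where the fine structure of $C$ at its singularities, rather than mere rationality, enters, and it is where the argument must be carried out with care.
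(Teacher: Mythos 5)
Your argument takes a genuinely different route from the paper's (the paper never touches the localisation sequence), but the step you postpone to the end is not merely delicate: it is false, and with it the strategy collapses. You correctly reduce surjectivity of $\psi$ to the equality $\ker\partial=\ker\partial'$ inside $D^*$, and, since you have already shown (in your part (i)) that $\partial'$ maps $G_1(D)$ onto all of $G_0(\overline{C}/s\overline{C})$, the reverse inclusion $\ker\partial\subseteq\ker\partial'$ is indeed equivalent to injectivity of the transfer $G_0(\overline{C}/s\overline{C})\to G_0(C/sC)$, exactly as you say. But that transfer is never injective when some singular point of $C$ has more than one preimage in $\overline{C}$, and such curves (e.g.\ nodal ones) are affine rational curves, squarely within the scope of the lemma. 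Concretely, let $C=k[x,y]/(y^2-x^2(x+1))\subset\overline{C}=k[t]$ with $x=t^2-1$, $y=t(t^2-1)$; the node $\m=(x,y)$ has the two preimages $(t-1)\overline{C}$ and $(t+1)\overline{C}$, and $s:=x$ lies in the conductor. Then $\overline{C}/s\overline{C}\cong k\times k$, so $G_0(\overline{C}/s\overline{C})\cong\BZ^2$, while $C/sC\cong k[y]/(y^2)$ is local, so $G_0(C/sC)\cong\BZ$; the transfer sends both generators $[\overline{C}/(t-1)]$ and $[\overline{C}/(t+1)]$ to the class of the unique simple module $C/\m$, hence kills $[\overline{C}/(t-1)]-[\overline{C}/(t+1)]\neq 0$. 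The same failure is visible on your boundary maps: for the unit $u=(t-1)/(t+1)=(y-x)/(y+x)$ of $D$ one has $\partial'(u)=[\overline{C}/(t-1)]-[\overline{C}/(t+1)]\neq 0$, whereas $\partial(u)=[C/(y-x)C]-[C/(y+x)C]=0$, because both quotients have length $3$ over $C$ (for $g\in\m$ vanishing only at the node, $\dim_k C/gC=\dim_k\overline{C}/g\overline{C}=\deg_t g$, the two correction terms coming from $\overline{C}/C\cong k$ cancelling). So $\ker\partial\neq\ker\partial'$ here, and no analysis of local lengths can repair this; the obstruction is precisely the existence of several branches through one singular point.

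There is also a second, unacknowledged gap: the commutativity of your ladder --- compatibility of the finite transfers of Remark \ref{ktheory}(v) with the two localisation sequences, in particular with their boundary maps --- is asserted but nowhere proved, and it is not among the paper's quoted facts (Remark \ref{ktheory}(vii) gives such functoriality only for \emph{flat} maps, and $C\hookrightarrow\overline{C}$ is finite, not flat). Worse, if all of your other identifications were valid for the $G_1$ this paper actually defines, the nodal computation above would identify $G_1(C)$ with $\ker\partial\cong k^*\times\BZ$, contradicting the lemma itself ($\overline{C}^*=k^*$ is divisible, so it has no quotient isomorphic to $k^*\times\BZ$); this is a sign that the localisation-sequence formalism cannot be combined naively with the automorphism-style (Whitehead group) definition of $G_1$ used here. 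The paper sidesteps all of this by a direct d\'evissage with pairs: every torsion pair $[M,\alpha]$ dies in $G_1(C)$ (split into primary pieces, filter by $\m^iM$, reduce to $((C/\m)^r,\alpha)$ with $\alpha\in {\rm GL}_r(k)$, and lift $\alpha$ to the constant matrix over $\overline{C}$ against the resolution $0\to\overline{C}^r\to\overline{C}^r\to(\overline{C}/\m')^r\to 0$), and for torsion-free $M$ the cokernel of $M\hookrightarrow M\otimes_C\overline{C}$ is torsion, giving $[M,\alpha]=\psi([M\otimes_C\overline{C},\alpha\otimes{\rm id}])$ outright. That argument never needs to separate the branches over a singular point, which is exactly why it succeeds where your reduction fails.
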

\begin{proof}
(i)  Since $\overline{C}$ is a smooth affine rational curve, 
$\overline{C} \cong k[X,\frac{1}{f(X)}]$ for some $f(X) \in k[X]$.  
Therefore, the rank map $G_0(\overline{C})\to \mathbb{Z}$ is an isomorphism 
as all projective modules over $\overline{C}$ are free and $G_0(\overline{C}) = K_0(\overline{C})$ 
by Remark \ref{ktheory}(ix). 
Thus from the commutative diagram
\[
\xymatrix{
G_0(\overline{C})  \ar@{->}[d]_{\mbox{rank}}^{\cong}\ar@{->}[r]^-{\phi} &  G_0(C)
\ar@{->}[d]^{\mbox{rank}} \\
\mathbb{Z} \ar@{->}[r]^-{\mbox{id}} & \mathbb{Z}, 
} 
\]
we see that the  group homomorphism  $\phi: G_0(\overline{C})\to G_0(C)$ is injective. 
We now show that $\phi$ is surjective.

We first show that $[C/\m]=0$ for every maximal ideal $\m$ of $C$.
Fix a maximal ideal $\m$ of $C$. Then there exists a
maximal ideal $\m'$ of $\overline{C}$ such that $\m=C\cap \m'$. 
By Remark \ref{ktheory}(iii), $[\overline{C}/\m']=0$ in $G_0(\overline{C})$ 
as $\m'$ is a principal ideal in $\overline{C}$. 
Since $k$ is an algebraically closed field, we have $C/\m= \overline{C}/\m'=k$. 
Hence $[C/\m]= \phi([\overline{C}/\m'])=0$ in $G_0(C)$.  

Since dim$(C)=1$ and $[C/\m]=0$ for each maximal ideal $\m$ of $C$, 
it follows from Remark \ref{ktheory}(ii) that 
$G_0(C)$ is a cyclic group generated by the class $[C]$.  
Since $\overline{C}/C$ is a torsion $C$-module, we have $[\overline{C}/C]= 0$ in $G_0(C)$
and hence $\phi([\overline{C}])=[C]$ in $G_0(C)$.   
This proves that $\phi$ is surjective. 

\smallskip

(ii)  We first show that $\psi$ is injective. 
Let $\mathcal{C}$ denote the conductor ideal of $\overline{C}$ over $C$. 
Choose $0\neq b\in \mathcal{C}$. Then $\overline{C}[b^{-1}]=C[b^{-1}]$. 
Since $\overline{C}\cong k[X, 1/f(X)]$ for some $f(X) \in k[X]$, by Remark \ref{ktheory}(ix), 
the map  $G_1(\overline{C}) \to G_1(\overline{C}[b^{-1}])$ is simply the inclusion map 
$\overline{C}^* \hookrightarrow \overline{C}[b^{-1}]^*$.
Thus, from the commutative diagram
\[
 \xymatrix{
G_1(\overline{C})\ar@{^{(}->}[d] \ar@{->}[r]^{\psi} & G_1(C) \ar@{->}[d] \\
G_1(\overline{C}[b^{-1}]) \ar@{->}[r]^{=} & G_1(C[b^{-1}]),
 }
\]
it follows that $\psi$ is injective.

We now prove the surjectivity of $\psi$. 
We first show that $[M,\alpha]=0$ in $G_1(C)$ for any 
finitely generated torsion $C$-module $M$ and any $\alpha \in {\rm Aut}_C (M)$. 
Fix a finitely generated torsion $C$-module $M$ and $\alpha \in {\rm Aut}_C (M)$. 
Since dim $C=1$, 
ann $M= \q_1 \cap \dots \cap \q_r$, where each $\q_i$ is $\m_i$-primary 
ideal for some maximal ideals $\m_i$ of $C$. 
It follows that 
$M = M_1\oplus\ldots\oplus M_r$, where each $M_i$ is a $C/\q_i$-module and 
$\alpha(M_i) = M_i$. Therefore, by Remark \ref{ktheory}(ii), 
$[M,\alpha]=\sum_i [M_i, \alpha|_{M_i}]$. 
Thus, it is enough to consider the case $r=1$, i.e., where 
$\q= \mbox{ann}(M)$ itself is an $\m$-primary ideal for some maximal ideal $\m$ of $C$.
We now show that $[M,\alpha]=0$ in $G_1(C)$ for such an $M$ and $\alpha$. 

Since $\q$ is $\m$-primary, there exists an integer $n\geq 1$ such that 
$\m^n\subseteq \q\subseteq \m$.  Then, 
$$
M\supset \m M \supset \ldots \supset \m^n M=0  \text{~and~} \alpha(\m^i M)=\m^i M, \,  1\leq i\leq n.
$$
Let $N_i= \m^i M/\m^{i+1} M$ and $\alpha_i$ the induced automorphism on $N_i$ for $0\leq i\leq n-1$. 
Then, by Remark \ref{ktheory}(ii), $[M, \alpha]=\sum_i [N_i, \alpha_i]$.
Now each  $N_i$ is a finite-dimensional vector space over the field $C/\m$,
i.e., $N_i = (C/\m)^{r_i}$ for some integer $r_i$. 
Thus, we are reduced to showing that the class $[(C/\m)^{r}, \alpha]=0$
in $G_1(C)$ for any maximal ideal $\m$ of $C$, any integer $r \ge 0$ 
and any automorphism $\alpha$ of $(C/\m)^{r}$. 

Since $\overline{C}$ is integral over $C$, for every
maximal ideal $\m$ of $C$, there exists a maximal ideal $\m'$ of $\overline{C}$ such
that $\m=\m' \cap C$. Since $\overline{C}$ is a PID, 
$\m'$ is principal, say, $\m'= a\overline{C}$.
Since $k$ is an algebraically closed field, $\overline{C}/\m'=k$ and 
hence for any automorphism $\alpha$ of $(\overline{C}/\m')^r$ 
(i.e., a matrix of ${\rm GL}_r(\overline{C}/\m')={\rm GL}_r(k)$),
there exists an automorphism $\tilde{\alpha}$ of $\overline{C}^r$ 
(i.e., a matrix of ${\rm GL}_r(\overline{C})$)
such that we have following commutative diagram:
\[
 \xymatrix{
 0  \ar@{->}[r] & \overline{C}^r \ar@{->}[d]^{\tilde{\alpha}} \ar@{->}[r]^-{(a,\dots,a)} &
\overline{C}^r \ar@{->}[d]^{\tilde{\alpha}} \ar@{->}[r] & (\overline{C}/\m')^r
\ar@{->}[d]^{\alpha} \ar@{->}[r] &  0 \\
 0  \ar@{->}[r] & \overline{C}^r  \ar@{->}[r]^-{(a,\dots,a)} &  \overline{C}^r  \ar@{->}[r]&
(\overline{C}/\m')^r \ar@{->}[r] &  0.
 }
\]
Hence $[(C/\m)^r, \alpha]= \psi([(\overline{C}/\m')^r, \alpha])=\psi([\overline{C}^r, \tilde{\alpha}]-
[\overline{C}^r,\tilde{\alpha}])=0$ in $G_1(C)$.

Now let $M$ be any finitely generated $C$-module and 
$M^{\rm tor}$ be the torsion submodule of $M$. Then 
$0\to M^{\rm tor}\to M\to M/M^{\rm tor}\to 0$ is a short exact
sequence and $M/M^{\rm tor}$ is a torsion-free $C$-module.
For any $C$-automorphism $\alpha$ of $M$, $\alpha(M^{\rm tor})=M^{\rm tor}$. 
Hence, for any $[M, \alpha]\in G_1(C)$, 
$[M,\alpha]=[M^{\rm tor}, \alpha|_{M^{\rm tor}}]+[M/M^{\rm tor},\bar{\alpha}]$, 
where $\bar{\alpha}$ is the automorphism of $M/M^{\rm tor}$ induced by
$\alpha$. 
By above, $[M^{\rm tor},\alpha|_{M^{\rm tor}}]=0$ and hence, for any $[M, \alpha]\in G_1(C)$, 
$[M, \alpha]=[M/M^{\rm tor}, \bar{\alpha}]$. 

Thus, $G_1(C)$ is generated by the classes
$[M, \alpha]$, where each $M$ is a finitely generated torsion-free $C$-module and 
$\alpha$ is an automorphism of $M$. We now show that each $[M, \alpha]$
lies in the image of the map $\psi:G_1(\overline{C})\to G_1(C)$.  

Let 
$K$ be the field of fractions of $C$. We assume that $M$ is torsion-free. Hence, the map 
$M \to M\ot_{C} K$ is injective and so, the map 
$\imath: M\to M\ot_C {\overline{C}}$ is injective.  
Thus, we get the commutative diagram:
\[
 \xymatrix{
0  \ar@{->}[r] & M \ar@{->}[d]^{\alpha} \ar@{->}[r]^-{\imath} &  M\otimes_{C}\overline{C} \ar@{->}[d]^{\alpha\otimes\mbox{id}} 
\ar@{->}[r] & (M\otimes_{C} \overline{C})/{\imath}(M)
\ar@{->}[d]^{\widetilde{\alpha}} \ar@{->}[r] &  0 \\
 0  \ar@{->}[r] & M  \ar@{->}[r]^-{\imath} &  M\otimes_{C}\overline{C} \ar@{->}[r] &
(M\otimes_{C} \overline{C})/{\imath}(M) \ar@{->}[r] &  0.
 }
\]
But $(M\otimes_{C}\overline{C})/{\imath}(M) \cong M\otimes_C (\overline{C}/C)$ is a $C$-torsion
module. Hence $[M\otimes_{C}\overline{C}/{\imath}(M), \widetilde{\alpha}]=0$,
which implies that $[M, \alpha]=\psi([M\otimes_{C}\overline{C}, \alpha\otimes\mbox{id}])$ in $G_1(C)$.
Therefore, $\psi$ is surjective and hence an isomorphism.
\end{proof}
 
We now prove a sufficient condition for an affine curve to be a polynomial curve.
\begin{lem}\label{polycur}
Let $k$ be an algebraically closed field of characteristic zero and $C$ be a one-dimensional affine $k$-domain. 
Suppose that $G_0(C)$ is finitely generated and the canonical map $G_1(k) \to G_1(C)$ is surjective.
Then $C$ is a polynomial curve.  
\end{lem}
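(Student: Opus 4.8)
The plan is to first show that $C$ is a rational curve, then pass to its normalisation $\overline{C}$, show $\overline{C}=k^{[1]}$, and conclude that $C\hookrightarrow\overline{C}=k[W]$ is a polynomial curve. For the rationality, note that since $C$ is an affine domain, its normalisation $\overline{C}$ in $K:=\mathrm{Frac}(C)$ is a module-finite extension of $C$ and is a regular one-dimensional domain. Choosing $0\neq b$ in the (nonzero) conductor of $\overline{C}$ over $C$, we have $C[1/b]=\overline{C}[1/b]$, a regular ring (a localisation of the smooth curve $\overline{C}$). Applying the localisation sequence of Remark \ref{ktheory}(vii) to $R=C$ and $x=b$, its right end
\[
G_0(C/bC)\lra G_0(C)\xrightarrow{\,j^*\,} G_0(C[1/b])\lra 0
\]
shows $j^*$ is surjective, so $G_0(C[1/b])$ is a quotient of the finitely generated group $G_0(C)$ and hence finitely generated. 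As $C[1/b]$ is regular, $K_0(C[1/b])=G_0(C[1/b])$ by Remark \ref{ktheory}(ix), so $K_0(C[1/b])$ is finitely generated. Since $C[1/b]$ is the coordinate ring of an irreducible affine curve, Lemma \ref{lpic1} forces $C[1/b]$ to be a non-singular affine rational curve; in particular $K=\mathrm{Frac}(C[1/b])\cong k(W)$, so $C$ is an affine rational curve.

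Now Lemma \ref{lemg} applies: the map $\psi\colon G_1(\overline{C})\to G_1(C)$ is an isomorphism and $G_1(C)=\overline{C}^{*}$, where $\overline{C}\cong k[X,1/f(X)]$ for some $f\in k[X]$. I claim the canonical map $\theta\colon k^{*}=G_1(k)\to G_1(C)$ of Remark \ref{ktheory}(iv) has image exactly the subgroup $k^{*}\subseteq\overline{C}^{*}$. Indeed, for $u\in k^{*}$ the exact sequence $0\to C\to\overline{C}\to\overline{C}/C\to 0$ is preserved by multiplication by $u$, so additivity (Remark \ref{ktheory}(ii)) gives $[\overline{C},\tilde{u}]=[C,\tilde{u}]+[\overline{C}/C,\tilde{u}]$ in $G_1(C)$; the torsion term vanishes by the computation in the proof of Lemma \ref{lemg}(ii), whence $\theta(u)=[C,\tilde{u}]=[\overline{C},\tilde{u}]=\psi(u)$ under the identification $G_1(\overline{C})=\overline{C}^{*}$.

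By hypothesis $\theta$ is surjective, so $\psi(k^{*})=G_1(C)=\psi(\overline{C}^{*})$; as $\psi$ is injective this gives $\overline{C}^{*}=k^{*}$. Writing $f=c\prod_i(X-a_i)^{e_i}$ with $c\in k^{*}$, each $X-a_i$ is a unit of $k[X,1/f]$, so $\overline{C}^{*}/k^{*}$ is free abelian on the distinct roots of $f$; hence $\overline{C}^{*}=k^{*}$ forces $f$ to be constant and $\overline{C}=k[X]=k^{[1]}$. Consequently $C\hookrightarrow\overline{C}=k[W]$, so $C$ is a polynomial curve.

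The step I expect to require the most care is the $G_1$ identification in the middle paragraph: one must verify that the canonical map $k^{*}\to G_1(C)$ is carried by the isomorphism $\psi$ onto the inclusion $k^{*}\hookrightarrow\overline{C}^{*}$, which hinges on the vanishing of $[\overline{C}/C,\tilde{u}]$ for the torsion module $\overline{C}/C$. By contrast, the reduction to rationality in the first paragraph and the final computation of $\overline{C}^{*}$ are comparatively routine.
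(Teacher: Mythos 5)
Your proposal is correct and follows essentially the same route as the paper: rationality of $C$ via the conductor element $b$, the localisation sequence and Lemma \ref{lpic1}; then identification of the canonical map $G_1(k)\to G_1(C)$ with $\psi$ composed with $k^*\hookrightarrow \overline{C}^*=G_1(\overline{C})$; then surjectivity plus injectivity of $\psi$ forcing $\overline{C}^*=k^*$ and hence $\overline{C}=k^{[1]}$. The only cosmetic difference is that you rederive the key identification by applying additivity to $0\to C\to\overline{C}\to\overline{C}/C\to 0$ and the vanishing of torsion classes, whereas the paper quotes the equivalent fact $\psi([F\otimes_C\overline{C},\alpha\otimes \mathrm{id}])=[F,\alpha]$ for free $F$ from the proof of Lemma \ref{lemg}(ii) — the same argument specialised to $F=C$.
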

\begin{proof}
We first show that  $C$ is an affine rational curve.
Let $\overline{C}$ denote the normalisation of $C$ and 
$\mathcal{C}$ denote the conductor of $\overline{C}$ over $C$. 
Choose $0\neq b\in \mathcal{C}$.
Note that the canonical map $G_0(C)\to G_0(C[1/b])$ is
surjective (cf. Remark \ref{ktheory}(vii)). 
Since $G_0(C)$ is finitely generated, so is $G_0(C[1/b])$. 
Since $C[1/b]=\overline{C}[1/b]$, we have $C[1/b]$ is a regular ring and 
hence $G_0(C[1/b])=K_0(C[1/b])$ by Remark \ref{ktheory}(ix). Therefore, by Lemma \ref{lpic1}, 
$C[1/b]$ is a non-singular affine rational curve. Hence $C$ is an affine rational curve.

We now show that $C$ is a polynomial curve. 
Note that since $C$ is an affine rational curve, the proof of Lemma \ref{lemg} shows
that if $F$ is a finitely generated free $C$-module and 
$\alpha$ is an automorphism of $F$, then 
$\psi ([F \otimes_C  \overline{C}, \alpha \otimes id]) = [F, \alpha]$ in $G_1(C)$. 
Therefore the map $\Gamma : G_1(k) \rightarrow G_1(C)$  which is a composite of
the canonical map $G_1(k) \rightarrow G_1 ( {\overline C})$ and $\psi: G_1({\overline{C}}) \rightarrow G_1(C)$
is the canonical map $G_1(k) \rightarrow G_1(C)$ which is assumed to
be surjective.
Since, by Lemma \ref{lemg}(ii) $ \psi$ is an isomorphism,  the canonical map
$G_1(k) \rightarrow G_1 ({\overline C})$ is surjective.
Since $C$ is an affine rational curve, we have 
$\overline{C} \cong k[X, 1/f(X)]$ for some $f(X) \in k[X]$. 
Since, by Remarks \ref{ktheory}(vi) and (ix), the canonical map 
$G_1(k) \to G_1(\overline{C})$  is simply the inclusion map 
$k^* \hookrightarrow  k[X, 1/f(X)]^*$, it follows that $f(X) \in k^*$. 
Thus, $\overline{C} = k^{[1]}$, i.e., $C$ is a polynomial curve.  
\end{proof}

We now prove the following general result.

\begin{prop}\label{propa}
Let $k$ be an algebraically closed field of characteristic zero and  
$A$ be an affine $k$-domain of dimension $n+1\ge 2$. 
Suppose that there exist $X\in A\setminus k$ and distinct elements
$\lambda_1, \dots, \lambda_r$ in $k$ such that 
\begin{enumerate}
\item [\rm (i)] $A[1/a(X)]= k[X, 1/a(X)]^{[n]}$, where 
$a(X)= (X-\lambda_1) \cdots (X-\lambda_r)$. 
\item [\rm (ii)] For each $i$, $1\le i\le r$, 
$A/(X-\lambda_i) = {C_i}^{[n-1]}$ for some affine domain $C_i$ of dimension one. 
\end{enumerate}
Then the following statements are equivalent:
\begin{enumerate}
\item [\rm (1)] The canonical maps $G_j(k) \to G_j(A)$ are isomorphisms for $j= 0, 1$.
\item [\rm (2)] $C_i$ is a polynomial curve for each $i$, $1\le i \le r$.
\end{enumerate}
\end{prop}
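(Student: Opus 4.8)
The plan is to compare $A$ with the polynomial subring $R:=k[X]$ (note that $X$ is transcendental over the algebraically closed field $k$ since $X\notin k$, so $R=k^{[1]}$) by means of the Bass localization sequence of Remark \ref{ktheory}(vii) for the nonzerodivisor $t:=a(X)$. As $A$ is a domain containing the PID $R$, it is torsion-free and hence flat over $R$, so the inclusion $R\hookrightarrow A$ induces a morphism between the two localization sequences. I would first set up the ``dictionary'' given by the four flanking comparison maps. Since the $\lambda_i$ are distinct, the ideals $(X-\lambda_i)$ are pairwise comaximal, so the Chinese Remainder Theorem gives $R/tR\cong k^r$ and, using hypothesis (ii), $A/tA\cong\prod_{i=1}^r C_i^{[n-1]}$; as $G_j$ of a finite product decomposes as the product, homotopy invariance (Remark \ref{ktheory}(viii)) identifies the induced map $G_j(R/tR)\to G_j(A/tA)$ with the product $\prod_i\bigl(G_j(k)\to G_j(C_i)\bigr)$ of canonical maps. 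Using hypothesis (i), $R[1/t]=k[X,1/a]=:S$ and $A[1/t]=S^{[n]}$, so by homotopy invariance $G_j(R[1/t])\to G_j(A[1/t])$ is an isomorphism for \emph{every} $j$. Finally, since $G_j(k)\to G_j(R)$ is an isomorphism, statement (1) is equivalent to the assertion that $G_j(R)\to G_j(A)$ is an isomorphism for $j=0,1$. With this dictionary both implications become five-lemma chases on the ladder of localization sequences, in which the localized terms already agree.

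For (2)$\Rightarrow$(1): if each $C_i$ is a polynomial curve then it is a rational curve whose normalization is $k^{[1]}$ (Theorem \ref{thE}), so Lemma \ref{lemg} shows that the canonical maps $G_j(k)\to G_j(C_i)$ are isomorphisms for $j=0,1$; hence $G_j(R/tR)\to G_j(A/tA)$ is an isomorphism for $j=0,1$. Feeding these isomorphisms, together with the homotopy-invariance isomorphisms on the localized terms (including the $G_2(R[1/t])$ term needed at the $j=1$ stage), into the five lemma forces $G_j(R)\to G_j(A)$, and therefore the canonical map $G_j(k)\to G_j(A)$, to be an isomorphism for $j=0,1$.

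For (1)$\Rightarrow$(2): now $G_j(R)\to G_j(A)$ is an isomorphism for $j=0,1$, and I would invoke Lemma \ref{polycur}, which needs, for each $i$, that $G_0(C_i)$ be finitely generated and that $G_1(k)\to G_1(C_i)$ be surjective. At the level $j=0$ the five lemma around $G_0(R/tR)$ (its four neighbouring vertical maps all being isomorphisms) gives that $G_0(R/tR)\to G_0(A/tA)$ is an isomorphism; being a product map, each factor $G_0(k)\to G_0(C_i)$ is an isomorphism, so $G_0(C_i)\cong\BZ$ is finitely generated. The main obstacle is the level $j=1$: one of the neighbours of $G_1(R/tR)$ is the map $G_2(R)\to G_2(A)$, about which (1) says nothing, so the full five lemma is unavailable and one cannot hope for an isomorphism here. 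The resolution is that only \emph{surjectivity} is required, and it follows from the epimorphism half of the five lemma around $G_1(R/tR)$, which uses the (isomorphism) maps on $G_2(R[1/t])$, $G_1(R)$ and $G_1(R[1/t])$ but crucially \emph{not} the map on $G_2(R)$. This yields that $G_1(R/tR)\to G_1(A/tA)$ is surjective, whence each $G_1(k)\to G_1(C_i)$ is surjective. Combining the two levels, Lemma \ref{polycur} shows that each $C_i$ is a polynomial curve, which completes the proof.
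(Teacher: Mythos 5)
Your proposal is correct and takes essentially the same route as the paper's proof: the same ladder of Bass localization sequences for $k[X]\hookrightarrow A$ with respect to $t=a(X)$ (flatness from torsion-freeness over a PID), the same identification of the vertical maps via the Chinese Remainder Theorem and homotopy invariance, four/five-lemma chases in both directions, and Lemmas \ref{lemg} and \ref{polycur} supplying the curve-level input. The only harmless deviations are that in (1)$\Rightarrow$(2) you upgrade the $j=0$ comparison to an isomorphism where the paper records only the surjectivity that Lemma \ref{polycur} requires, and your observation that the epimorphism half of the chase at $j=1$ never uses $G_2(k[X])\to G_2(A)$ is precisely why the paper's diagram (\ref{eqbig}) starts at $G_2(k[X,1/a(X)])$.
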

\begin{proof}
Since $A$ is a torsion-free module over the PID $k[X]$, the inclusion $k[X]\hookrightarrow A$ 
is a flat ring homomorphism and hence we have the
commutative diagram (cf. Remark \ref{ktheory}(vii)):
{\tiny
\begin{equation}\label{eqbig}
\xymatrix{
G_2(k[X,1/a(X)]) \ar[r]\ar[d] &G_{1}(\frac{k[X]}{(a(X))})\ar[r]\ar[d] & G_1(k[X]) \ar[r]\ar[d]  & G_1(k[X,1/a(X)]) \ar[r]\ar[d]
&G_{0}(\frac{k[X]}{(a(X))})\ar[r]\ar[d] & G_0(k[X]) \ar@{->>}[r]\ar[d]  & G_0(k[X, 1/a(X)])\ar[d] \\
G_2(A[1/a(X)])  \ar[r]   &G_{1}(\frac{A}{(a(X))})   \ar[r]       & G_1(A)    \ar[r]        & G_1(A[1/a(X)])  \ar[r]    
&G_{0}(\frac{A}{(a(X))})   \ar[r]       & G_0(A)    \ar@{->>}[r]   & G_0(A[1/a(X)]).
}
\end{equation}
}
\normalsize
As $A[1/a(X)]= k[X, 1/a(X)]^{[n]}$, the canonical maps 
$G_j(k[X, 1/a(X)]) \to G_j(A[1/a(X)])$ and $G_j(k) \to G_j(k[X])$ are isomorphisms
for $j=0,1,2$  by Remark \ref{ktheory}(viii). In particular, for $j= 0, 1$, the vertical maps
$G_j(k[X]) \to G_j(A)$ are simply the canonical maps $G_j(k) \to G_j(A)$.
Again, by Remark \ref{ktheory}(ix), the horizontal map $G_1(k[X]) \to G_1(k[X,1/a(X)])$
in the first row is simply the inclusion map $k^* \hookrightarrow k[X,1/a(X)]^*$.
Also from the commutative diagram
\[
\xymatrix{
G_0({k[X]})  \ar@{->}[d]_{\mbox{rank}}^-{\cong}\ar@{->}[r] &  G_0(k[X,1/a(X)])
\ar@{->}[d]_{\mbox{rank}}^{\cong} \\
\mathbb{Z} \ar@{->}[r]^-{\mbox{id}} & \mathbb{Z},
} 
\]
we see that the map $G_0(k[X]) \to G_0(k[X,1/a(X)])$ is an isomorphism. 
Hence, diagram (\ref{eqbig}) yields: 
\tiny
\begin{equation}\label{eqbig1}
\xymatrix{
G_2(k[X,1/a(X)]) \ar[r]\ar[d]_{\cong} &G_{1}(\frac{k[X]}{(a(X))})\ar[r]\ar[d] & G_1(k) \ar@{^{(}->}[r]\ar[d]  & G_1(k[X,1/a(X)]) \ar[r]\ar[d]^{\cong} 
&G_{0}(\frac{k[X]}{(a(X))})\ar[r]\ar[d]& G_0(k) \ar[r]^-{\cong}\ar[d]  & G_0(k[X, 1/a(X)])\ar[d]^{\cong} \\
G_2(A[1/a(X)])  \ar[r] &G_{1}(\frac{A}{(a(X))})   \ar[r]       & G_1(A)    \ar[r]        & G_1(A[1/a(X)])  \ar[r]    
&G_{0}(\frac{A}{(a(X))})   \ar[r]       & G_0(A)    \ar@{->>}[r]        & G_0(A[1/a(X)]).
}
\end{equation}
\normalsize
Set $x_i:= X-\lambda_i$ for $1\le i \le r$.
As $k[X]/(a(X)) = \prod_i k[X]/(x_i)$ and $A/(a(X))= \prod_i A/(x_i)$ by Chinese Remainder Theorem, 
we see that, for $j=0, 1$, $G_j(A/(a(X))) \cong \oplus_{i=1}^{r}G_j(A/(x_i))$, and 
the vertical maps $G_j(k[X]/(a(X))) \to G_j(A/(a(X)))$ are simply the product of the canonical maps 
$G_j(k) (= G_j(k[X]/(x_i))) \to G_j(A/(x_i))$.   
Since $A/(x_i) = {C_i}^{[n-1]}$, by Remark \ref{ktheory}(viii), the canonical map 
$G_j(C_i)\hookrightarrow G_j(A/(x_i))$  are  isomorphisms for $j=0, 1$.
Hence, each vertical map $G_j(k[X]/(a(X))) \to G_j(A/(a(X)))$, for $j= 0, 1$,
is the product of the canonical maps $G_j(k) \to G_j(C_i)$.   

We now prove the equivalence of (1) and (2).

\noindent
(1) $\implies$ (2): 
Since the vertical maps $G_j(k) \to G_j(A)$ are isomorphisms for $j=0, 1$, it 
follows from diagram (\ref{eqbig1}), that 
the vertical maps $G_j(k[X]/(a(X))) \to G_j(A/(a(X)))$ are surjective for $j= 0, 1$. Hence, 
the canonical maps $G_j(k) \to G_j(C_i)$ are surjective for $j= 0, 1$ and $1\le i\le r$. 
Since $G_0(k)$ is isomorphic to $\BZ$, we have $G_0(C_i)$ is finitely generated for $1\le i\le r$.
The result now follows by Lemma \ref{polycur}.

\smallskip

\noindent
(2) $\implies$ (1): 
Fix $i$,   $1\le i \le r$.
Let $\overline{C_i}$ denote the normalisation of $C_i$. 
Since $C_i$ is a polynomial curve, $\overline{C_i}=k[W]$ for some $W$ by Theorem \ref{thE} and 
hence the canonical maps $G_j(k) \to G_j(\overline{C_i})$ are isomorphisms, for $j= 0, 1$,
by Remark \ref{ktheory}(viii).  Moreover, by Lemma \ref{lemg}, 
the group homomorphisms $\phi:G_0(\overline{C_i})\to G_0(C_i)$ and 
$\psi:G_1(\overline{C_i})\to G_1(C_i)$ are isomorphisms. 
Thus the composite map
$$
 G_j(k) \rightarrow G_j({\overline{C_i}}) \rightarrow G_j(C_i) \rightarrow G_j(A/(X- \lambda_i))
$$
is an isomorphism for $j =0,1$. Note that these maps are the
canonical maps induced by the inclusion $k[X] \rightarrow A$. Hence,
the vertical maps, $G_{j}(\frac{k[X]}{(a(X))}) \to G_{j}(\frac{A}{(a(X))})$ are isomorphisms for $j=0, 1$.
From diagram (\ref{eqbig1}), it now follows that
the maps $G_j(k) \to  G_j(A)$ are isomorphisms for $j=0, 1$. Hence the result. 
\end{proof}

The above result applies to the family of threefolds $x^my=F(x,z,t)$ considered in \cite{G1};
in particular, to the Russell-Koras cubic $x^2y+x+z^2+t^3=0$.

\begin{cor}\label{polyk}
Let $k$ be an algebraically closed field of characteristic zero  and \\
$A= k[X,Y,Z,T]/(X^mY-F(X,Z,T))$, where $m \ge 1$
and $F(0, Z,T)$ is an irreducible polynomial in $k[Z,T]$. 
Then the canonical maps $G_j(k) \to G_j(A)$ are isomorphisms for $j= 0, 1$ 
if and only if  $C:= k[Z,T]/(F(0, Z,T))$ is a polynomial curve.
In particular, if $A$ is the Russell-Koras cubic $\BC[X,Y,Z,T]/(X^2Y+X+Z^2+T^3)$, then 
$K_0(A)= G_0(A)= \BZ$ and $K_1(A)= G_1(A)= k^*$. 
\end{cor}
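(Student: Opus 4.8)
The plan is to obtain the corollary as a direct application of Proposition~\ref{propa}. First I would record that $A$ is a three-dimensional affine domain: viewing $g := X^mY - F(X,Z,T)$ as a degree-one polynomial in $Y$ over the UFD $k[X,Z,T]$, its coefficients $X^m$ and $F$ are coprime precisely because $F(0,Z,T)\neq 0$ forces $X\nmid F$; hence $g$ is irreducible and $A = k[X,Y,Z,T]/(g)$ is a domain with $\dim A = 3$. I would then apply Proposition~\ref{propa} with $n=2$, $r=1$, $\lambda_1 = 0$ and $a(X) = X$, the distinguished element being $X \in A\setminus k$.

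The two hypotheses of Proposition~\ref{propa} are easy to verify in this setting. For (i), in $A[1/X]$ the relation $g=0$ solves $Y = F(X,Z,T)/X^m$, so $Y$ becomes redundant and $A[1/X] = k[X,1/X][Z,T] = k[X,1/X]^{[2]}$, which is hypothesis (i) with $n=2$. For (ii), setting $X=0$ kills the term $X^mY$ and sends $F(X,Z,T)$ to $F(0,Z,T)$, so $A/XA \cong \big(k[Z,T]/(F(0,Z,T))\big)[Y] = C^{[1]}$; since $F(0,Z,T)$ is irreducible, $C = k[Z,T]/(F(0,Z,T))$ is a one-dimensional affine $k$-domain, as required. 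With these verified, Proposition~\ref{propa} yields at once that the canonical maps $G_j(k)\to G_j(A)$ are isomorphisms for $j=0,1$ if and only if $C$ is a polynomial curve, which is the assertion of the corollary.

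For the Russell-Koras cubic I would write $X^2Y + X + Z^2 + T^3 = X^2Y - F$ with $F = -(X + Z^2 + T^3)$, so that $m=2$ and $C = \BC[Z,T]/(Z^2 + T^3)$. The cuspidal cubic $C$ is a polynomial curve, as the assignment $Z\mapsto s^3$, $T\mapsto -s^2$ embeds it into $\BC[s]$; hence the first part of the corollary applies and gives $G_0(A)=G_0(k)=\BZ$ and $G_1(A)=G_1(k)=k^*$. Finally, since the Russell-Koras cubic $A$ is a regular ring, Remark~\ref{ktheory}(ix) identifies $G_i(A)$ with $K_i(A)$, so $K_0(A)=\BZ$ and $K_1(A)=k^*$ as claimed.

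The only genuinely nonformal points are the domain/irreducibility check (which hinges on $F(0,Z,T)\neq 0$) and, for the particular cubic, exhibiting the explicit parametrization showing $\BC[Z,T]/(Z^2+T^3)$ is a polynomial curve together with the regularity of $A$ needed to pass from $G$-theory to $K$-theory; the rest is a mechanical matching with the hypotheses of Proposition~\ref{propa}.
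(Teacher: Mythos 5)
Your proof is correct and takes essentially the same route as the paper: the paper's own proof simply invokes Proposition \ref{propa} with $a(X)=X$ and, for the Russell--Koras cubic, uses its regularity together with Remark \ref{ktheory}(ix) to pass from $G_i(A)$ to $K_i(A)$. The details you supply (irreducibility of $X^mY-F(X,Z,T)$ via $X\nmid F$, verification of hypotheses (i) and (ii) of Proposition \ref{propa}, and the parametrization $Z\mapsto s^3$, $T\mapsto -s^2$ of the cuspidal cubic) are exactly the checks the paper leaves implicit.
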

\begin{proof}
The equivalence follows from Proposition \ref{propa} by taking $a(X)= X$. 
For the Russell-Koras cubic $A$ (a regular ring), the equality of $G_i(A)$ and $K_i(A)$ for $i=0,1$
follows from Remark \ref{ktheory}(ix). 
\end{proof}

As an immediate application of Proposition \ref{propa}, we deduce the following result 
which was proved in \cite[Theorem 3.11]{G1} for $m \ge 2$.

\begin{cor}\label{comp}
Let $k$ be an algebraically closed field of characteristic zero and \\
$A= k[X,Y,Z,T]/(X^mY-F(X,Z,T))$, where $m \ge 1$. Then the following statements are equivalent:
\begin{enumerate}
\item [\rm (i)] $A = k^{[3]}$. 
\item [\rm (ii)] $k[Z,T]= k[F(0, Z,T)]^{[1]}$. 
\item [\rm (iii)] $k[Z,T]/(F(0, Z,T))= k^{[1]}$. 
\end{enumerate}
\end{cor}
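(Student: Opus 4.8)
The plan is to treat the three conditions through the implications $(\text{iii})\Rightarrow(\text{i})$, $(\text{i})\Rightarrow(\text{iii})$ and the separate equivalence $(\text{ii})\Leftrightarrow(\text{iii})$, after first recording the geometry of $A$. Write $f:=F(0,Z,T)$, put $C=k[Z,T]/(f)$, and let $x,y,z,t$ be the images of $X,Y,Z,T$ in $A$. Setting $X=0$ in the defining relation gives $A/(x)=k[Y,Z,T]/(f)=C[y]=C^{[1]}$, while inverting $x$ turns $y=F/x^m$ into a polynomial expression, so $A[1/x]=k[x,1/x][z,t]=k[x,1/x]^{[2]}$. If $f$ is zero or reducible, then $X^mY-F$ is reducible and $A$ is not a domain, so (i) fails; in that situation (ii) and (iii) fail as well, since the relevant quotient is not a domain, and the equivalence holds trivially. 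Hence I may assume $f$ irreducible, so that $A$ is an affine domain and $C$ a one-dimensional affine domain; this is precisely the setting of Proposition \ref{propa} (equivalently Corollary \ref{polyk}) with $a(X)=X$, $r=1$, $n=2$.

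The equivalence $(\text{ii})\Leftrightarrow(\text{iii})$ is independent of the rest. If $k[Z,T]=k[f]^{[1]}$ then $k[Z,T]/(f)=k^{[1]}$, giving (iii); conversely, if $k[Z,T]/(f)\cong k^{[1]}$ then $(f)$ is the kernel of an epimorphism $k[Z,T]\twoheadrightarrow k^{[1]}$, and by the Abhyankar--Moh--Suzuki epimorphism theorem $f$ is a coordinate of $k[Z,T]$, i.e. $k[Z,T]=k[f]^{[1]}$. For $(\text{iii})\Rightarrow(\text{i})$ I would localise the $k[x]$-algebra $A$ at the maximal ideals of the PID $k[x]$. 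Over $(x-\lambda)$ with $\lambda\neq0$ the element $x$ is a unit, so $A_{(x-\lambda)}=\bigl(k[x,1/x]^{[2]}\bigr)_{(x-\lambda)}=k[x]_{(x-\lambda)}^{[2]}$; over $(x)$ I apply Sathaye's theorem (Theorem \ref{Sa}) to the DVR $R=k[x]_{(x)}$, whose generic fibre is $k(x)^{[2]}$ and whose closed fibre is $A/(x)=C^{[1]}=k^{[2]}$ since $C=k^{[1]}$, whence $A\otimes_{k[x]}R=R^{[2]}$. As $A$ is a finitely generated $k[x]$-algebra, Theorem \ref{bcw} then gives $A=k[x]^{[2]}=k^{[3]}$.

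The substantive direction is $(\text{i})\Rightarrow(\text{iii})$. Assuming $A=k^{[3]}$, I first observe that $A$ is a factorial regular domain with $A^{*}=k^{*}$, and that the units of $A[1/x]=k[x,1/x]^{[2]}$ are exactly $k^{*}x^{\mathbb Z}$; since each prime factor of $x$ becomes a unit in $A[1/x]$, every such factor is an associate of $x$, so $x$ is prime in $A$ and $A/(x)=C[y]$ is a domain, confirming that $f$ is irreducible. Because $A=k^{[3]}$ is regular, Remark \ref{ktheory}(viii),(ix) give $G_j(A)=K_j(A)=K_j(k)=G_j(k)$, so the canonical maps $G_j(k)\to G_j(A)$ are isomorphisms for $j=0,1$; by Corollary \ref{polyk} this forces $C$ to be a polynomial curve. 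To finish I would show that $C$ is \emph{smooth}, for then Theorem \ref{thE} (a smooth polynomial curve is $k^{[1]}$) yields $C=k^{[1]}$, which is (iii).

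The smoothness of $C$ is the step I expect to be the main obstacle. When $m=1$ it is cheap: the Jacobian ideal of $g=XY-F$ cuts out a singular locus of $\operatorname{Spec}A$ mapping isomorphically onto $\operatorname{Sing}(C)$ (the $Y$-coordinate being determined along it), so the regularity of $A=k^{[3]}$ already forces $C$ smooth, and together with the previous paragraph this settles $m=1$. For $m\ge2$ regularity alone is insufficient --- the Russell--Koras cubic $X^2Y+X+Z^2+T^3$ is regular while its associated $C$ is the singular cuspidal cubic --- and, crucially, $G$-theory cannot detect the difference, since that cusp has $G_0=\mathbb Z$ and $G_1=k^{*}$ exactly as $k^{[1]}$ does. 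Thus for $m\ge2$ one must genuinely use that $A$ is a \emph{polynomial} ring rather than merely regular; I would do this through the affine-modification description $A=k[x,z,t][F/x^m]$ of $A$ as a modification of $\mathbb{A}^3$ along $\{x=0\}$, controlling the exceptional fibre $C\times\mathbb{A}^1$ by a Makar--Limanov/degree argument, or else invoke the case $m\ge2$ of \cite[Theorem 3.11]{G1}, the new content of the present formulation being the uniform treatment together with the case $m=1$.
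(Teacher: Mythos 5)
Your route is essentially the paper's own: the equivalence (ii)$\Leftrightarrow$(iii) via Abhyankar--Moh--Suzuki, the implication (iii)$\Rightarrow$(i) by localizing over $k[x]$ and combining Sathaye's theorem (Theorem \ref{Sa}) at $(x)$ with Bass--Connell--Wright (Theorem \ref{bcw}), and, for (i)$\Rightarrow$(iii), the split into $m=1$ (where the $G$-theory criterion of Proposition \ref{propa}/Corollary \ref{polyk} gives that $C$ is a polynomial curve, and regularity of $C$ plus Theorem \ref{thE} finishes) and $m\ge 2$ (where the paper simply quotes \cite[Theorem 3.11]{G1}, which is exactly your fallback). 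Your two self-contained substitutes for the paper's citations -- the unit/prime-factorization argument showing $x$ is prime in $A=k^{[3]}$, hence $f:=F(0,Z,T)$ irreducible, and the Jacobian computation identifying $\operatorname{Sing}(\operatorname{Spec} A)$ with $\operatorname{Sing}(\operatorname{Spec} C)$ when $m=1$ -- are both correct, and your diagnosis that for $m\ge 2$ regularity and $G$-theory cannot detect the singularity of $C$ (Russell--Koras) is precisely why the paper also resorts to \cite[Theorem 3.11]{G1} there; your alternative sketch via affine modifications and Makar--Limanov invariants is too vague to count as a proof, so the citation is what actually carries that case.

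One intermediate claim in your degenerate-case reduction is false: ``if $f$ is zero or reducible, then $X^mY-F$ is reducible and $A$ is not a domain.'' Since $X^mY-F$ has degree one in $Y$, it is reducible in $k[X,Y,Z,T]$ if and only if $\gcd(X^m,F)$ is a nonunit, i.e.\ if and only if $X\mid F$, i.e.\ $f=0$; reducibility of $f$ alone does not suffice. For example, with $F=ZT+X$ and $m=1$, $f=ZT$ is reducible, yet $XY-ZT-X=X(Y-1)-ZT$ is irreducible and $A$ is a domain (with an isolated singularity, so (i) does fail here, but not for your stated reason). The fact you need -- that (i) fails whenever $f$ is reducible -- is true and follows from your own later argument: if $A=k^{[3]}$ then $x$ is prime in $A$, so $A/xA\cong(k[Z,T]/(f))[y]$ is a domain, forcing $f$ irreducible. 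Alternatively, simply delete the degenerate-case paragraph: conditions (ii) and (iii) each force $f$ irreducible on their own, and your (i)$\Rightarrow$(iii) argument re-establishes irreducibility, so the reduction is never actually used.
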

\begin{proof}
The equivalence of (ii) and (iii) is the Epimorphism Theorem  of Abhyankar-Moh-Suzuki (cf. \cite{AM} and \cite{Suz}).

\noindent
(iii) $\implies$ (i). Let $x$ denote the image of $X$ in $A$. 
Since $k[Z,T]/(F(0,Z,T)) = k^{[1]}$, we have 
$A/xA= k^{[2]}$. 
Set $R:= k[x]$. Let $P$ be a maximal ideal of $R$ and set $S:= R\setminus P$. 
Let $R_P = S^{-1}R$ and $A_P = S^{-1}A$. If $x \notin P$, then clearly $A_P = {R_P}^{[2]}$.
Now suppose that $x \in P$, i.e., $P= xR$. 
Since $\BQ \subseteq A_P$ and $A/xA= k^{[2]}$, by Theorem \ref{Sa}, we have $A_P = {R_P}^{[2]}$. 
Therefore, since $R$ is a PID, by Theorem \ref{bcw}, $A= R^{[2]}= k^{[3]}$. 

\noindent
(i) $\implies$ (iii).
Set $f(Z,T): = F(0, Z, T)$ and $C:= k[Z,T]/(f)$.  We show that $C= k^{[1]}$. 
If $m \ge 2$, then we are through by \cite[Theorem 3.11]{G1}. Now suppose that $m=1$. 
Since $A$ is a regular UFD, it follows that $f$ is an irreducible polynomial in $k[Z,T]$ 
(cf. \cite[Lemma 3.1]{G1}) and $C$ is a regular domain. 
By Remark \ref{ktheory}(viii), the canonical maps $G_j(k) \to G_j(A)$ are isomorphisms for $j=0, 1$. 
Hence, by Proposition \ref{propa}, $C$ is a polynomial curve and so, by Theorem \ref{thE}, 
we have $C= k^{[1]}$. 
\end{proof}

We now prove Theorem II.

\begin{thm}\label{th7}
Let $k$ be an algebraically closed field  of characteristic zero.  Let $D$ be a $k[X]$-linear 
locally nilpotent derivation of $k[X, X_2, X_3, X_4]$ and $A= $ker$(D)$. 
Then the canonical maps $G_j(k) \to G_j(A)$ are isomorphisms for $j= 0, 1$.
In particular, $G_0(A)= \BZ$ and $G_1(A)= k^*$. 
\end{thm}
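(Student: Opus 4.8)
The plan is to reduce Theorem \ref{th7} to the already-established general result, Proposition \ref{propa}, by showing that the kernel $A$ of a $k[X]$-linear locally nilpotent derivation satisfies hypotheses (i) and (ii) of that proposition with the $C_i$ all polynomial curves. The structural input I need is precisely Theorem \ref{tr1} (and the apparatus of Section \ref{regular}): if $A$ has only isolated singularities, then $A \cong_{k[X]} k[X,Y,Z,T]/(a(X)Y - F(X,Z,T))$ with $a(X) = (X-\lambda_1)\cdots(X-\lambda_r)$ and each $k[Z,T]/(F(\lambda_i,Z,T))$ a polynomial curve. However, Theorem \ref{th7} makes no regularity assumption on $A$, so the first order of business is to handle the general case where $A$ may have non-isolated singularities.

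\emph{First I would} set $R := k[X]$ and invoke the Bhatwadekar--Daigle finiteness result, together with Theorem \ref{miya} applied over the generic point, to obtain that $A \otimes_{k[X]} k(X) = k(X)^{[2]}$ and that there is a monic $a(X) \in k[X]$ with $A[1/a(X)] = k[X,1/a(X)]^{[2]}$; this immediately gives hypothesis (i) of Proposition \ref{propa} with $n=2$. Writing $a(X) = (X-\lambda_1)\cdots(X-\lambda_r)$ (distinct $\lambda_i$, by minimality of $a$), the key remaining point is to identify the fibres $A/(X-\lambda_i)$. \emph{The main work} is to show that for each $i$ one has $A/(X-\lambda_i) = C_i^{[1]}$ for a one-dimensional affine domain $C_i$ which is in fact a polynomial curve. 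This is exactly the local analysis carried out in Lemma \ref{BD} and Proposition \ref{bdd2}: localizing at the DVR $R_{(X-\lambda_i)}$, the derivation becomes $R_{(X-\lambda_i)}$-linear, and Lemma \ref{BD} yields $A_{(X-\lambda_i)}/(X-\lambda_i) = C_i^{[1]}$ with $C_i$ a polynomial curve embedded in $k[X_2,X_3,X_4]$. Factorial closedness of $A$ in $B$ (used as in the proof of Theorem \ref{tr1}) ensures $(X-\lambda_i)B \cap A = (X-\lambda_i)A$, so the fibre $A/(X-\lambda_i)$ itself (not merely a localization) has the form $C_i^{[1]}$ with $C_i \hookrightarrow k[X_2,X_3,X_4]$ a polynomial curve. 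This verifies hypothesis (ii) of Proposition \ref{propa} and the truth of its condition (2).

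\emph{With both hypotheses and condition (2) in hand}, I would simply apply the implication $(2) \Rightarrow (1)$ of Proposition \ref{propa} to conclude that the canonical maps $G_j(k) \to G_j(A)$ are isomorphisms for $j = 0,1$. The final numerical consequences $G_0(A) = \BZ$ and $G_1(A) = k^*$ then follow from the standard identifications $G_0(k) = K_0(k) = \BZ$ and $G_1(k) = K_1(k) = k^*$ recorded in Section \ref{prelim}.

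\emph{The hard part}, or at least the point requiring care, is the passage from the localized description (Lemma \ref{BD}, which is stated over a DVR) to the global fibre identification $A/(X-\lambda_i) = C_i^{[1]}$ over the residue field $k = R/(X-\lambda_i)$, and the verification that the polynomial-curve property survives. Here the factorial-closedness of $A$ in $B = k[X,X_2,X_3,X_4]$ is essential: it controls the behaviour of $X-\lambda_i$ as a prime element and guarantees the fibre is reduced and a domain. An alternative, cleaner route for the general (possibly non-isolated-singularity) case is to run the semilocal argument of Lemma \ref{bl3} directly with $t = a(X)$, obtaining a subring $E = R^{[2]}$ and $F \in E$ with $A = E[F/t]$ as in the proof of Theorem \ref{tr1}; the fibres $A/(X-\lambda_i)$ are then read off as $(k[Z,T]/(F(\lambda_i,Z,T)))^{[1]}$ exactly as there, and factorial closedness again identifies $k[Z,T]/(F(\lambda_i,Z,T))$ with a subring of $k[X_2,X_3,X_4]$, hence a polynomial curve. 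I expect this second route to be the one actually taken, since it mirrors Theorem \ref{tr1} almost verbatim and avoids re-deriving the fibre structure from scratch.
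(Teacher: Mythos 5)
Your primary argument is correct and is essentially the paper's own proof: Miyanishi's theorem plus the Bhatwadekar--Daigle finiteness result give a minimal monic $a(X)$ with $A[1/a(X)]=k[X,1/a(X)]^{[2]}$; Lemma \ref{BD}, applied over each DVR $k[X]_{(X-\lambda_i)}$, gives $A/(X-\lambda_i)={C_i}^{[1]}$ with $C_i\hookrightarrow k[X_2,X_3,X_4]$ a polynomial curve; and Proposition \ref{propa}, implication (2) $\Rightarrow$ (1), concludes. Two corrections to your surrounding commentary. First, the passage from the localized fibre to the global fibre that you flag as "the hard part" is automatic and needs no appeal to factorial closedness at that stage: every element of $S_i=k[X]\setminus (X-\lambda_i)k[X]$ becomes a unit modulo $(X-\lambda_i)$, so $A/(X-\lambda_i)A = S_i^{-1}A/(X-\lambda_i)S_i^{-1}A$ on the nose (factorial closedness is of course used, but inside Lemma \ref{BD} itself, to make $t$ prime in $A$ and to embed $A/tA$ into the residue polynomial ring). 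Second, and more importantly, your proposed "alternative, cleaner route" via Lemma \ref{bl3} is not valid here and is not the route the paper takes: Lemma \ref{bl3}, like Theorem \ref{tr1} and Proposition \ref{bdd2}(i) on which it rests, carries the hypothesis that $A$ has only isolated singularities --- that hypothesis is precisely what forces $E=R^{[2]}$ (via Lemma \ref{b1}) --- and Theorem \ref{th7} makes no such assumption. The reason the paper invokes Lemma \ref{BD} directly is that it requires no smoothness hypothesis at all, and its weaker conclusion (fibre equal to ${C_i}^{[1]}$ with $C_i$ a polynomial curve, with no claim that the intermediate ring $E$ is a polynomial ring) is exactly what Proposition \ref{propa} needs.
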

\begin{proof}
We may assume that $D \neq 0$. $D$ extends to a locally nilpotent derivation of\\ $k(X)[X_2, X_3, X_4]$
and hence $A \otimes_{k[X]} k(X) = k(X)^{[2]}$ by Theorem \ref{miya}. 
Since ker$(D)$ is finitely generated by \cite[Proposition 4.13]{BD}, 
there exists a monic polynomial $a(X) \in k[X]$ of least possible degree such that $A[1/a(X)]= k[X, 1/a(X)]^{[2]}$. 
Let $a(X)= (X-\lambda_1) \cdots (X-\lambda_r)$ be a prime factorization of $a$.
By Lemma \ref{BD}, for each $i$, $1\le i \le r$, 
$A/(X-\lambda_i) = {C_i}^{[1]} \hookrightarrow k[X_2, X_3, X_4]$ for some subring $C_i$ of $A/(X-\lambda_i)$. 
Hence $C_i$ is a polynomial curve. 
The result now follows from Proposition \ref{propa}. 
\end{proof}

\section{Theorem III}\label{Main}
In this section we shall prove Theorem III (Theorem \ref{MT}) which will provide an
infinite family of counter-examples to Miyanishi's question. 

Let $F(Z, T) \in k[Z, T]$ be an irreducible polynomial, $C=k[Z, T]/(F(Z, T))$ and 
$A=k[U, V, Z, T]/(U^mV-F(Z, T))$, where $ m\geq 1$.  
We write $A=k[u, v, z, t]$, where $u, v, z, t$ are the 
images of $U, V, Z, T$ respectively. 
We prove below two lemmas relating the finite generation of the two groups 
$K_0(A)$ and $K_0(C)$ with that of $K_0(\mathcal{P}_{1}(A,u))$.

\begin{lem}\label{l2}
If $K_0(\mathcal{P}_{1}(A,u))$ is not finitely generated then $K_0(A)$ is not finitely generated.
\end{lem}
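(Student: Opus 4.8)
The plan is to use the localization sequence of Theorem~\ref{locseq} applied to the ring $A$ and the nonzerodivisor $u$, which reads
\begin{equation*}
K_1(A) \to K_1(A[1/u]) \stackrel{\partial}{\to} K_0(\mathcal{P}_{1}(A,u)) \stackrel{\delta}{\to} K_0(A) \to K_0(A[1/u]).
\end{equation*}
The strategy is to show that the cokernel of $\partial$ (equivalently, the image of $\delta$) injects into $K_0(A)$ while simultaneously accounting for the contribution of $K_0(A[1/u])$, so that an infinitely generated $K_0(\mathcal{P}_{1}(A,u))$ forces $K_0(A)$ to be infinitely generated as well. The key observation I would exploit is that upon inverting $u$ the relation $u^mv = F(z,t)$ becomes invertible in $v$, so $F(z,t)$ becomes a unit and $A[1/u]$ simplifies dramatically: one expects $A[1/u] \cong k[u, u^{-1}, z, t][1/F(z,t)]$ or something close to a localized polynomial ring, whose $K_0$ should be easy to control and, in particular, finitely generated.

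First I would identify $A[1/u]$ explicitly. Since $v = F(z,t)/u^m$ in $A[1/u]$, the variable $v$ is redundant, and $A[1/u] = k[u, u^{-1}, z, t]$, a Laurent polynomial extension of the polynomial ring $k[z,t]$ in the variable $u$. By Remark~\ref{ktheory}(viii), or rather its $K$-theoretic analogue for regular rings via Remark~\ref{ktheory}(ix), the group $K_0$ of such a ring is $\BZ$; in any case it is finitely generated. This is the crucial simplification that makes the rightmost term in the localization sequence harmless.

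With $K_0(A[1/u])$ finitely generated, the exact sequence shows that the image of $\delta \colon K_0(\mathcal{P}_{1}(A,u)) \to K_0(A)$ has finite index in (or at least finitely generated cokernel inside) $K_0(A)$; more precisely, $\operatorname{im}(\delta) = \ker\bigl(K_0(A) \to K_0(A[1/u])\bigr)$, and this kernel fits in an exact sequence with the finitely generated group $K_0(A[1/u])$. Dually, $\operatorname{im}(\delta) \cong K_0(\mathcal{P}_{1}(A,u)) / \operatorname{im}(\partial) = K_0(\mathcal{P}_{1}(A,u))/\ker(\delta)$. The plan is then to argue by contraposition: if $K_0(A)$ were finitely generated, then its subgroup $\operatorname{im}(\delta)$ would be finitely generated (being a subgroup of a finitely generated abelian group, which is again finitely generated), and combining this with the finite generation of $K_0(A[1/u])$ one would need to deduce that $K_0(\mathcal{P}_{1}(A,u))$ itself is finitely generated.

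The main obstacle will be controlling $\ker(\delta) = \operatorname{im}(\partial)$, which is a quotient of $K_1(A[1/u])$. To conclude that $K_0(\mathcal{P}_{1}(A,u))$ is finitely generated from the finite generation of its image under $\delta$, I need $\operatorname{im}(\partial)$ to be finitely generated, and for this it suffices that $K_1(A[1/u])$ is finitely generated. Since $A[1/u] = k[u, u^{-1}, z, t]$ is a localization of a polynomial ring, I would invoke the fundamental theorem of $K$-theory (the Bass--Heller--Swan type decomposition, cf.\ the analogue of Remark~\ref{ktheory}(viii) for $K_1$): $K_1$ of a Laurent polynomial ring over a regular ring decomposes as $K_1$ plus $K_0$ of the base, and $K_1(k[z,t]) = k^*$ with $K_0(k[z,t]) = \BZ$, both finitely generated. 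Thus $K_1(A[1/u])$ is finitely generated, hence so is $\operatorname{im}(\partial) = \ker(\delta)$, and the extension
\begin{equation*}
0 \to \ker(\delta) \to K_0(\mathcal{P}_{1}(A,u)) \to \operatorname{im}(\delta) \to 0
\end{equation*}
with both outer terms finitely generated would force $K_0(\mathcal{P}_{1}(A,u))$ to be finitely generated, the desired contradiction. Verifying the precise structure of $K_1(A[1/u])$ and ensuring the Bass--Heller--Swan decomposition applies is where I expect the real care to be required.
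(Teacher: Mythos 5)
Your overall strategy --- the localization sequence of Theorem \ref{locseq}, the identification $A[1/u]=k[u,u^{-1},z,t]$ (after your first paragraph's incorrect guess that $F(z,t)$ becomes a unit, which you then correct), and the contrapositive via the extension $0\to\ker(\delta)\to K_0(\mathcal{P}_{1}(A,u))\to\operatorname{im}(\delta)\to 0$ --- is exactly the paper's. But there is a genuine gap at the step where you control $\ker(\delta)=\operatorname{im}(\partial)$. You assert that $K_1(A[1/u])$ is finitely generated because $K_1(k[z,t])=k^*$ and $K_0(k[z,t])=\BZ$ are ``both finitely generated.'' That is false: $k$ is algebraically closed of characteristic zero, so $k^*$ is a divisible (in particular uncountable, for $k=\BC$) group and is not finitely generated. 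Consequently $K_1(A[1/u])\cong k[u,1/u]^*\cong k^*\oplus\BZ$ is not finitely generated either, and knowing that $\operatorname{im}(\partial)$ is a quotient of $K_1(A[1/u])$ gives you no control over it. As written, your contradiction never materializes.

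The repair --- and this is precisely what the paper does --- is to use exactness at $K_1(A[1/u])$ one step further to the left: $\ker(\delta)=\operatorname{im}(\partial)\cong\operatorname{coker}\bigl(K_1(A)\to K_1(A[1/u])\bigr)$, not merely some quotient of $K_1(A[1/u])$. The troublesome subgroup $k^*$ of $K_1(A[1/u])=k[u,1/u]^*$ lies in the image of $K_1(A)$, because the composite $K_1(k)\to K_1(A)\to K_1(A[1/u])$ is the inclusion $k^*\hookrightarrow k[u,1/u]^*$. Hence the cokernel $H$ is a quotient of $k[u,1/u]^*/k^*\cong\BZ$ (generated by the class of $u$), so $H$ is finitely generated; feeding $\ker(\delta)=H$ into your extension argument then yields the contradiction. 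In short, the missing idea is not a different sequence but the observation that the finiteness lives only in the cokernel of $K_1(A)\to K_1(A[1/u])$, never in $K_1(A[1/u])$ itself.
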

\begin{proof}
By Theorem \ref{locseq}, we have the following exact sequence
$$
K_1(A)\rightarrow K_1(A[1/u]) \rightarrow K_0(\mathcal{P}_{1}(A,u))\rightarrow {K_0}(A)\rightarrow {K_0}(A[1/u]). 
$$
The above sequence reduces to 
\[
 \xymatrix{ 0\ar[r] & H \ar[r] & K_0(\mathcal{P}_{1}(A,u))\ar[r] & {K_0}(A)\ar[r] & {K_0}(A[1/u]),  }
\] 
where $H:=$ coker$(K_1(A)\rightarrow K_1(A[1/u]))$. 
To prove the result, it is enough to show that $H$ is a finitely generated group.
Now $A[1/u]= k[u, 1/u, z, t]$ and hence $K_1(A[1/u]) = K_1(k[u, 1/u]) = k[u,1/u]^*$ (cf. Remark \ref{ktheory}(xi) and (viii)).  
Note that the composite map $K_1(k) \to K_1(A) \to K_1(A[1/u])$ is simply the inclusion map
$k^* \hookrightarrow  k[u,1/u]^*$.  
Hence we have the following exact sequence:
\begin{equation}
K_1(A)/k^* \to k[u, 1/u]^*/k^* \to H \to 0. 
\end{equation}
Thus $H$ is the quotient of the group $k[u, 1/u]^*/k^* \cong \BZ$ and hence a finitely generated group.
\end{proof}

\begin{lem}\label{l3}
If $K_0(C)$ is not finitely generated then $K_0(\mathcal{P}_{1}(A,u))$ is also not finitely generated.
\end{lem}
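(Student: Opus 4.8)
The plan is to build a ring homomorphism $A \to C$ that allows us to transport torsion modules over $C$ to objects of $\mathcal{P}_1(A,u)$ and thereby compare the two Grothendieck groups. The most natural candidate is the map $\pi : A \to A/uA$. Since $u^m v = F(z,t)$ in $A$, setting $u=0$ forces $F(z,t)=0$, so $A/uA \cong k[v, z, t]/(F(z,t)) = C[v] = C^{[1]}$. Thus I would first identify $A/uA$ with the polynomial ring $C^{[1]}$, and invoke Remark~\ref{ktheory}(viii) to get $K_0(C) \xrightarrow{\sim} K_0(C^{[1]}) = K_0(A/uA)$. So showing $K_0(\mathcal{P}_1(A,u))$ is not finitely generated reduces to producing an injection (or at least a map with controllable kernel/cokernel) of a non–finitely-generated group into it.

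The key tool is the map $K_0(u) : K_0(A/uA) \to K_0(\mathcal{P}_1(A,u))$ from equation~(\ref{eqka}), which sends a projective $A/uA$-module $Q$ (viewed as a $u$-torsion $A$-module of projective dimension one) to its class $[Q]$. The heart of the argument is to prove that this map is \emph{injective}. For this I would apply Lemma~\ref{l1}, which guarantees injectivity precisely when $A$ is a polynomial ring in the variable $u$ over its subring, i.e.\ when $A = C'[u] = {C'}^{[1]}$ for some $C'$. The obstacle is that $A = k[u,v,z,t]/(u^mv - F(z,t))$ is \emph{not} of this form: $u$ is very far from being a variable, since $v$ is not a free variable (it satisfies $u^m v = F$). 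So Lemma~\ref{l1} does not apply to $A$ directly, and this mismatch is the main difficulty to overcome.

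The way around this is to localize or pass to a subring where $u$ \emph{does} become a variable, and then use functoriality of $K_0(\mathcal{P}_1(-,u))$ supplied by Lemma~\ref{ll1}. Concretely, I would consider the subring $A' := k[u, z, t] \subseteq A$; since $F(z,t) = u^m v$, the element $F$ becomes divisible by $u^m$ inside $A$, but within $A'$ we still have $A' = k[z,t][u] = (k[z,t])^{[1]}$, a genuine polynomial ring in $u$. Then $A'/uA' = k[z,t]/(F(z,t)) = C$, so $K_0(u') : K_0(C) \to K_0(\mathcal{P}_1(A',u))$ is injective by Lemma~\ref{l1}. Lemma~\ref{ll1}, applied to the inclusion $\phi : A' \hookrightarrow A$ (with $u$ a nonzerodivisor in both), yields a functorial map $K_0(\mathcal{P}_1(A',u)) \to K_0(\mathcal{P}_1(A,u))$ fitting into a commutative square with the two maps $K_0(u')$ and $K_0(u)$ and the base-change $K_0(C) \to K_0(A/uA)$. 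The final step is a diagram chase: since $K_0(u')$ is injective and $K_0(C)$ is not finitely generated by hypothesis, its image inside $K_0(\mathcal{P}_1(A,u))$ is a non–finitely-generated subgroup, forcing $K_0(\mathcal{P}_1(A,u))$ itself to be non–finitely-generated. I expect the genuine technical care to lie in checking that the identification $A/uA \cong C^{[1]}$ and the functoriality squares are compatible with the chosen generators, so that the transported classes $[Q]$ remain distinct in $K_0(\mathcal{P}_1(A,u))$.
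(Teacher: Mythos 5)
There is a genuine gap, and it sits exactly at the step you flagged as the heart of the argument. Your subring $A' := k[u,z,t]$ does not have the quotient you claim. Since $v = F(z,t)/u^m$ in $A[1/u]$, the elements $u,z,t$ are algebraically independent over $k$, so $A' = k[z,t][u]$ is an honest polynomial ring in three variables and hence $A'/uA' = k[z,t]$, \emph{not} $k[z,t]/(F) = C$. Your own sentences contradict each other here: if $A'$ is a genuine polynomial ring in $u$ over $k[z,t]$, then reducing mod $u$ simply deletes $u$. The point is that $F(z,t)$ lies in $uA$ but not in $uA'$, because the factorization $F = u^m v$ uses $v$, which is not in $A'$; what is true is that $uA \cap A' = (u,F)A'$, and $A'/(uA\cap A') \cong C$ is not the same ring as $A'/uA'$. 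Consequently Lemma \ref{l1} applied to $A'$ only injects $K_0(k[z,t]) = \BZ$ into $K_0(\mathcal{P}_{1}(A',u))$, which carries no information about $K_0(C)$. There is also a second, independent gap in your final chase: even if you had an injection of $K_0(C)$ into $K_0(\mathcal{P}_{1}(A',u))$, you would still need the composite with the base-change map $K_0(\mathcal{P}_{1}(A',u)) \to K_0(\mathcal{P}_{1}(A,u))$ of Lemma \ref{ll1} to be injective, and nothing in your argument controls the kernel of that second map.

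The paper's proof repairs both defects with one move: instead of a subring, it uses the \emph{quotient} $E := A/vA \cong C[u]$ — killing $v$ is precisely what turns $u$ into a polynomial variable over $C$, whereas no subring of $A$ can do this. Lemma \ref{ll1}, applied to the surjection $\phi: A \to E$, gives a map $\phi_2: K_0(\mathcal{P}_{1}(A,u)) \to K_0(\mathcal{P}_{1}(E,u))$; note the functoriality now runs \emph{out of} the group of interest, so no injectivity into $K_0(\mathcal{P}_{1}(A,u))$ is ever needed. One then has a commutative square whose horizontal maps are $\sigma_1: K_0(A/uA) \to K_0(\mathcal{P}_{1}(A,u))$ and $\sigma_2: K_0(E/uE) = K_0(C) \to K_0(\mathcal{P}_{1}(E,u))$, and whose left vertical map $K_0(C[\bar v]) = K_0(A/uA) \to K_0(C)$ (induced by $\bar v \mapsto 0$) is surjective, while $\sigma_2$ is injective by Lemma \ref{l1} since $E = C[u]$. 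Chasing the square gives $K_0(C) \cong \operatorname{Im}(\sigma_2) = \operatorname{Im}(\phi_2 \sigma_1) \subseteq \operatorname{Im}(\phi_2)$, so $K_0(C)$ embeds in a homomorphic image of $K_0(\mathcal{P}_{1}(A,u))$; since subgroups and quotients of finitely generated abelian groups are finitely generated, non-finite generation of $K_0(C)$ forces non-finite generation of $K_0(\mathcal{P}_{1}(A,u))$. So your choice of tools (Lemmas \ref{l1} and \ref{ll1}) was correct, but they must be applied to $A/vA$ rather than to $k[u,z,t]$, and the comparison map must point from $K_0(\mathcal{P}_{1}(A,u))$ rather than into it.
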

\begin{proof}
Set $E:=A/vA$. Let $\phi: A \rightarrow E$  denote  the canonical
surjective  map sending $v$ to $0$. We write $u$ for $\phi(u)$ in $E$. 
Then $E= C[u]$ and $E/uE= C$.
Then, by Lemma \ref{ll1}, the map 
$K_0(\widetilde{\phi}): K_0(\mathcal{P}_{1}(A,u))\to K_0(\mathcal{P}_{1}(E,u))$ sending 
$[M]$ to $[M\ot_A E]$, is well defined. 
Let $\overline{\phi}: A/uA \to E/uE$ be  the map induced by $\phi$ going modulo $u$.
Then $\overline{\phi}$ induces  a canonical map 
$K_0({\overline{\phi}}): K_0(A/uA)\to K_0(E/uE)$. 
Set $\phi_1:= K_0({\overline{\phi}})$ and $\phi_2:= K_0({\widetilde{\phi}})$ and 
let $\sigma_1$ and $\sigma_2$ be the natural maps defined in equation (\ref{eqka}) of Section \ref{prelim}. 
Let $\bar{v}$ denote the image of $v$ in $A/uA$. Hence, as $A/uA=C[\bar{v}]$ and $E/uE= C$, 
we have the following commutative diagram:
\[
\xymatrix{
K_0(C[\bar{v}])= K_0(A/uA) \ar@{->}[r]^-{\sigma_1}  \ar@{->}[d]^{\phi_1} & K_0(\mathcal{P}_{1}(A,u)) 
     \ar@{->}[d]^{\phi_2} 
\\
K_0(C)= K_0(E/uE)  \ar@{->}[r]^-{\sigma_2}  &K_0(\mathcal{P}_{1}(E,u)).
}
\]
Since the map $A/uA\longrightarrow E/uE$ maps $\bar{v}$ to $0$, 
the map $\phi_1$ is surjective. 
Hence Im$(\sigma_2)=$~Im$(\phi_2\sigma_1)$ and so 
Im$(\sigma_2)\subseteq $ Im$(\phi_2)$.   
Since $E=C[u]$, by Lemma \ref{l1}, $\sigma_2$ is injective.
Thus, $K_0(C)$ is isomorphic to a subgroup of $\phi_2(K_0(\mathcal{P}_{1}(A,u)))$. 
Therefore, if $K_0(C)$ is not finitely generated  
then  $K_0(\mathcal{P}_{1}(A,u))$ is also not finitely generated.
\end{proof} 

We now establish that the group $K_0(A)$ is finitely generated if and only if $K_0(C)$ is so. 
The implication (c) $\implies$ (a) of the following proposition will be used in the proof of Theorem III.

\begin{prop}\label{pm}
Let $k$ be an algebraically closed field of characteristic zero. 
Let $F(Z, T)$ be an irreducible polynomial of $k[Z, T]$, 
$C= k[Z, T]/(F)$ and 
$A= k[U, V, Z, T]/(U^mV-F(Z,T))$ for some integer $m \ge 1$. 
Then the following statements are equivalent:
\begin{enumerate}
\item [\rm(a)] $C$ is a non-singular affine rational curve. 
\item [\rm(b)] {\it ${K_0}(A) =\mathbb Z$.}
\item [\rm(c)] {\it ${K_0}(A)$ is finitely generated.}
\item [\rm(d)] {\it ${K_0}(C)$ is finitely generated.}
\end{enumerate}
\end{prop}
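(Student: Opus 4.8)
The plan is to close the cycle of implications $(a)\Rightarrow(b)\Rightarrow(c)\Rightarrow(d)\Rightarrow(a)$, in which three of the four arrows are immediate consequences of results already at hand and only $(a)\Rightarrow(b)$ requires real work. The implication $(b)\Rightarrow(c)$ is trivial, since $\BZ$ is finitely generated. For $(c)\Rightarrow(d)$ I would argue by contraposition: if $K_0(C)$ is not finitely generated, then Lemma \ref{l3} shows $K_0(\mathcal{P}_1(A,u))$ is not finitely generated, whence Lemma \ref{l2} shows $K_0(A)$ is not finitely generated. Finally $(d)\Rightarrow(a)$ is exactly the consequence recorded in Lemma \ref{lpic1}: over an algebraically closed field of characteristic zero, $K_0(C)$ is finitely generated if and only if $C$ is a non-singular affine rational curve.

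The substance of the proposition is therefore the implication $(a)\Rightarrow(b)$: assuming $C$ is a non-singular affine rational curve, I must show $K_0(A)=\BZ$. First I would observe that, since $F$ is irreducible and the plane curve $F=0$ is smooth, the affine hypersurface $A$ is regular. Indeed a singular point of the hypersurface $U^mV-F=0$ forces $U=0$ (from the partial derivative in $V$) and then $F=F_Z=F_T=0$, i.e.\ a singular point of the curve $C$; smoothness of $C$ rules these out. Consequently $K_0(A)=G_0(A)$ by Remark \ref{ktheory}(ix), and I may compute with $G$-theory.

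Writing $u$ for the image of $U$, I would feed the nonzerodivisor $u$ into the localization sequence of Remark \ref{ktheory}(vii):
\[
G_1(A[1/u]) \stackrel{\delta}{\lra} G_0(A/uA) \stackrel{\pi_*}{\lra} G_0(A) \stackrel{j^*}{\lra} G_0(A[1/u]) \lra 0.
\]
Here $A/uA \cong C^{[1]}$, so $G_0(A/uA)=G_0(C)$ by homotopy invariance (Remark \ref{ktheory}(viii)); and since $C$ is smooth rational, $C\cong k[W,1/f(W)]$ is a PID, giving $G_0(C)=\BZ$ generated by the class $[C]$, which corresponds to $[A/uA]$ (cf.\ Lemma \ref{lemg}(i) and Theorem \ref{thE}). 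Likewise $A[1/u]=k[u,1/u]^{[2]}$ is regular with $G_0(A[1/u])=G_0(k[u,1/u])=\BZ$. The key point is that the generator $[A/uA]$ of $G_0(A/uA)$ maps to $0$ under $\pi_*$: because $u$ is a nonzerodivisor of the domain $A$, the exact sequence $0\ra A\stackrel{u}{\ra}A\ra A/uA\ra 0$ gives $[A/uA]=0$ in $G_0(A)$ (Remark \ref{ktheory}(iii)). Hence $\pi_*=0$, so $j^*$ is injective; being also surjective, $j^*$ is an isomorphism and $K_0(A)=G_0(A)\cong G_0(A[1/u])=\BZ$, which is $(b)$.

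The main obstacle is the implication $(a)\Rightarrow(b)$, and within it the crucial simplification is the passage to $G$-theory: once regularity of $A$ lets us replace $K_0$ by $G_0$, the localization sequence becomes available and the vanishing of $[A/uA]$ in $G_0(A)$ does the rest. The remaining care is in correctly identifying the rings $A/uA\cong C^{[1]}$ and $A[1/u]\cong k[u,1/u]^{[2]}$ and in verifying that the class $[A/uA]$ generates $G_0(A/uA)$, for which the rationality hypothesis on $C$ (not merely its smoothness) is essential --- this is precisely the place where the equivalence with $(a)$, rather than with the weaker statement ``$A$ is regular'', enters.
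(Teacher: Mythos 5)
Your proposal is correct and follows essentially the same route as the paper: the same cycle $(a)\Rightarrow(b)\Rightarrow(c)\Rightarrow(d)\Rightarrow(a)$, with $(c)\Rightarrow(d)$ via Lemmas \ref{l2} and \ref{l3}, $(d)\Rightarrow(a)$ via Lemma \ref{lpic1}, and $(a)\Rightarrow(b)$ by regularity of $A$ plus the localization sequence for the nonzerodivisor $u$, killing the class $[A/uA]$ via $0\to A\stackrel{u}{\to}A\to A/uA\to 0$ to get $K_0(A)\cong K_0(A[1/u])=\BZ$. The only differences are cosmetic: you phrase the sequence in $G$-theory and convert to $K_0$ at the end (the paper converts first using $G_i=K_i$ for regular rings), and you spell out the Jacobian check that smoothness of $C$ forces regularity of $A$, which the paper merely asserts.
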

 \begin{proof}
(a) $\implies$ (b): Since $C$ is a regular ring, it follows that $A$ is a regular ring and $A/uA$ is a regular ring. 
Hence, by Remark \ref{ktheory}(vii) and (ix), we have the following exact sequence:
\begin{equation}\label{mm}
 K_1(A)\rightarrow K_1(A[1/u]) \rightarrow K_0(A/uA)\stackrel{\delta}{\rightarrow}
 {K_0}(A)\rightarrow {K_0}(A[1/u]) \rightarrow 0. 
\end{equation}
Now $C$ is a non-singular affine rational curve and hence 
$C$ is of the form $k[X, \frac{1}{f(X)}]$ for some $f(X) \in k[X]$. 
Thus $K_0(C)= \mathbb Z$ and since $A/uA \cong C^{[1]}$, 
we have $K_0(A/uA) = \mathbb Z$ (cf. Remark \ref{ktheory}(viii)). Hence, $K_0(A/uA)$ is generated by $[A/uA]$,
the class of the rank-one free module $A/uA$.
Since $u$ is a nonzerodivisor in $A$, the following sequence is exact:
$$
0 \rightarrow A\stackrel{u}{\rightarrow} A\rightarrow A/uA\rightarrow 0.
$$ 
Therefore, by Remark \ref{r3}, $\delta([A/uA]) = [A]-[A]=0$, i.e., $\delta$ is the zero map. Hence,
by (\ref{mm}), ${K_0}(A) \cong {K_0}(A[1/u])$. 
Now $A[1/u]= k[u, 1/u, z, t]$ and hence $K_0(A[1/u]) =\mathbb Z$. 
Therefore, $K_0(A) = \mathbb Z$. 

\smallskip

(b) $\implies$ (c): Trivial. 

\smallskip

(c) $\implies$ (d):
Follows from Lemmas \ref{l2} and \ref{l3}.

\smallskip

(d) $\implies$ (a): 
Follows from Lemma \ref{lpic1}. 
\end{proof}

We now prove Theorem III. Recall that for an $R$-module $M$, an element $x\in M \setminus \{0\}$ is 
said to be an {\it unimodular element} if $M= N \oplus Rx$  for some submodule $N$ of $M$, i.e., if
there exists $f \in {\rm Hom}_R\,(M, R)$ satisfying $f(x)=1$.

\begin{thm}\label{MT}
Let $k$ be an algebraically closed field of characteristic zero.
Let $F(Z, T)$ be an irreducible polynomial of $k[Z, T]$, 
$C= k[Z, T]/(F)$, $B=k[X_1, X_2, X_3, X_4]$ and 
$$
A= k[X_1, Y, Z, T]/({X_1}^mY-F(Z, T)) \text{~~where~~} m \ge 1.
$$
Then 
\begin{enumerate}
\item [\rm (i)] Every projective $A$-module of rank $\geq 3$ has a unimodular element.
\item [\rm (ii)] If $C$ is not a regular ring, then $K_0(A)$ is not finitely generated; in fact, 
there exists an infinite family of pairwise non-isomorphic projective $A$-modules of rank two
which are not even stably isomorphic.  
\item [\rm (iii)] The following statements are equivalent:\\
(a)  There exists a $D \in \lnd_{k[X_1]}(B)$ such that $A \cong$ ker$(D)$ as $k$-algebras.\\
(b) $C$ is a polynomial curve.
\end{enumerate}
\end{thm}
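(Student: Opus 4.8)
The plan is to handle the three parts in turn, exploiting throughout the fibration of $A$ over $\Spec k[X_1]$, for which $A[1/X_1]=k[X_1,1/X_1][Z,T]$ is a polynomial ring in two variables over the PID $k[X_1,1/X_1]$ while $A/X_1A = C[Y] = C^{[1]}$ is two-dimensional.

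For (i), let $P$ be a projective $A$-module of rank $r\geq 3$. Over $A[1/X_1]=k[X_1,1/X_1][Z,T]$, which is a two-variable polynomial extension of the regular one-dimensional ring $k[X_1,1/X_1]$, every projective module is extended from the base and hence free; so $P[1/X_1]$ is free and in particular has a unimodular element. Over $A/X_1A = C^{[1]}$, which has dimension two, the module $P/X_1P$ has rank $r\geq 3 > 2 = \dim(A/X_1A)$, so by Serre's theorem it has a unimodular element. The final step is to patch the unimodular element over the open locus $X_1\neq 0$ with the one over the closed fibre $X_1=0$ to produce a unimodular element of $P$ itself; this gluing is the only delicate point of (i), and it is available because the base $k[X_1]$ is a PID, via the standard lifting/transitivity machinery for unimodular elements over algebras of finite type over a PID.

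For (ii), I would argue as follows. If $C$ is not regular then it is not a non-singular affine rational curve, so by the contrapositive of the implication $(c)\Rightarrow(a)$ in Proposition \ref{pm}, $K_0(A)$ is not finitely generated. The rank map $K_0(A)\to\BZ$ is split surjective (split by $[A]$), giving $K_0(A)\cong\BZ\oplus\widetilde{K}_0(A)$; as $K_0(A)$ is not finitely generated, $\widetilde{K}_0(A)$ is infinite. By part (i) together with cancellation, any projective of rank $s\geq 2$ splits as $P_0\oplus A^{s-2}$ with $\operatorname{rank}P_0=2$, so every class of $\widetilde{K}_0(A)$ can be written $[P_0]-[A^2]$ for a rank-two projective $P_0$. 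Since $\widetilde{K}_0(A)$ is infinite, this produces infinitely many rank-two projectives with pairwise distinct classes in $K_0(A)$; modules with distinct $K_0$-classes cannot be stably isomorphic, which is exactly the claim.

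For the equivalence (iii), the forward implication $(a)\Rightarrow(b)$ I would deduce from the structure theory. Assuming $A\cong\ker D$ with $D\in\lnd_{k[X_1]}(B)$, the kernel is factorially closed in $B=k^{[4]}$, hence a UFD with unit group $k^*$, and the image $u$ of $X_1\in A$ is a prime of $\ker D$ (so of $B$) with $A/uA = C^{[1]}$. By Lemma \ref{BD}, the fibres of $\Spec(\ker D)\to\Spec k[X_1]$ are cylinders $C_\lambda^{[1]}$ over polynomial curves $C_\lambda$; matching $A/uA=C^{[1]}$ with the appropriate special fibre (using factorial closedness, and, in the singular case, the location of the singular locus of $A$) identifies $C$ with some $C_\lambda$, whence $\overline{C}=k^{[1]}$ by Theorem \ref{thE} and $C$ is a polynomial curve. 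Identifying $C$ with a fibre curve is the delicate point here. The reverse implication $(b)\Rightarrow(a)$ is the heart of the theorem and the main obstacle: given that $C$ is a polynomial curve, Theorem \ref{thE} supplies $\overline{C}=k[W]$ and a parametrization $z=a(W),\,t=b(W)$ with $F(a,b)=0$, and the plan is to write down an explicit $k[X_1]$-linear locally nilpotent derivation $D$ on $B$ with $\ker D\cong A$, modelled on the iterated "$X_1$-division" of Lemma \ref{BD}, so that $Z,T$ lift the parametrization into the kernel and $V=F/X_1^m$ is forced into the kernel by the relation $X_1^mV=F$. The two required verifications — local nilpotency of $D$, and, more seriously, that $\ker D$ equals $A$ and is no larger — are where the main work lies; the second is to be settled through the characterization in Lemma \ref{BD}, by checking that the generic fibre is $\A^2$ (i.e.\ $\ker(D)[1/X_1]=k[X_1,1/X_1]^{[2]}$) and that the fibre over $X_1=0$ is the cylinder $C^{[1]}$.
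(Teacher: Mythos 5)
Your part (ii) is essentially sound and coincides with the paper's own argument: Proposition \ref{pm} gives that $K_0(A)$ is not finitely generated, and iterated splitting via (i) represents every class of $\widetilde{K}_0(A)$ as $[P_0]-[A^2]$ with $P_0$ of rank two (note that cancellation is not needed for this, only repeated splitting off of unimodular elements). The problems are in (i) and (iii). In (i), the fibrewise facts you list are correct, but the ``patching'' of a unimodular element of $P[1/X_1]$ with one of $P/X_1P$ is not standard machinery that can be invoked: it is precisely the content of the Bhatwadekar--Roy stability theorem for overrings $R[X]\subseteq A\subseteq R[X,1/X]$, which is what the paper cites (since $k[Z,T][X_1]\subset A\subset k[Z,T][X_1,1/X_1]$, \cite[Theorem 4.2]{BR} applies directly). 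Serre's theorem only gives unimodular elements in rank $>\dim A=3$; lowering the bound to rank $\geq 3$ is exactly the nontrivial point, and the Plumstead-style gluing that achieves it is the theorem, not a routine lemma.

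In (iii) both directions remain plans, and in each case the deferred step is the mathematical heart. For (b)$\Rightarrow$(a) you never exhibit the derivation: the paper takes $D=X_1^m\partial_{X_2}+\alpha'(X_2)\partial_{X_3}+\beta'(X_2)\partial_{X_4}$ together with the embedding $z=\alpha(X_2)-X_1^mX_3$, $t=\beta(X_2)-X_1^mX_4$, $y=F(z,t)/X_1^m$, and the serious work is showing $\ker D$ is no larger than $A$. Your proposed verification --- that both rings have generic fibre ${\A}^2$ and special fibre $C^{[1]}$ --- does not yield equality: one needs the inclusion $A\subseteq\ker D$ to induce an injection $A/X_1A\to \ker D/X_1\ker D$ (so that Lemma \ref{lemlast} applies), which the paper obtains from $X_1B\cap A=X_1A$ via a transcendence-degree computation showing $\bar y$ is transcendental over $k[\alpha(X_2),\beta(X_2)]$ (using that $F_Z$ or $F_T$ survives the parametrization). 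For (a)$\Rightarrow$(b), your fibration-matching idea founders on the very point you flag: (a) provides only a $k$-algebra isomorphism $A\cong\ker D$, so the $k[x_1]$-fibration of $A$ need not correspond to the $k[X_1]$-fibration of $\ker D$ at all (for $m=1$ there is no invariant such as $\ml$ forcing such a correspondence), and no argument is given that identifies $C$ with a fibre curve $C_\lambda$; the singular locus alone cannot recover $C$. The paper sidesteps this entirely with isomorphism-invariant data: by Theorem \ref{th7} the maps $G_j(k)\to G_j(\ker D)$, hence $G_j(k)\to G_j(A)$, are isomorphisms for $j=0,1$, and then Proposition \ref{propa}, applied to $A$'s \emph{own} fibration ($A[1/x_1]=k[x_1,1/x_1]^{[2]}$, $A/x_1A=C^{[1]}$), forces $C$ to be a polynomial curve. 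Until the gluing in (i), the kernel identification in (b)$\Rightarrow$(a), and the fibration-transfer in (a)$\Rightarrow$(b) are supplied, the proposal does not constitute a proof.
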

\begin{proof}
(i)  Since $k[X_1, Z, T] \subset A \subset k[X_1, {X_1}^{-1}, Z,T]$,
from \cite[Theorem 4.2]{BR}  it follows that every projective $A$-module of
rank $\geq 3$ has a unimodular element. 

\smallskip

(ii) Since $C$ is a non-regular ring, 
by Proposition \ref{pm}, $K_0(A)$ is not finitely generated. 
Since $A[1/X_1] (\cong k[X_1, 1/X_1]^{[2]})$ is a UFD and 
$A/X_1A$ is an integral domain, by Nagata's criterion 
$A$ is a UFD (cf. \cite[Theorem 20.2]{Mat}) and hence all projective $A$-modules of rank one are free. 
Therefore, as $K_0(A)$ is not finitely generated, by (i) there exists
an infinite family of pairwise non-isomorphic projective $A$-modules of rank two
which are not even stably isomorphic. 

\smallskip

(iii) (b) $\implies$ (a): Let $C \hookrightarrow k[W]$ for some $W$ transcendental over $k$. 
Let $z$ and $t$ denote respectively the images of $Z$ and $T$ in $C$. Then 
$z = \alpha(W)$ and $t =\beta(W)$ for some $\alpha(W)$, $\beta(W) \in k[W]$ and    
$F(\alpha(W), \beta(W)) = 0$.
In particular,  $F(\alpha(X_2), \beta(X_2))=0$.  
Therefore, we have a  $k[X_1]$-algebra homomorphism $\phi:~k[X_1, Y, Z, T]\rightarrow B$
defined by 
\begin{eqnarray*}
\phi(Z)&=&\alpha(X_2)-X_{1}^{m}X_3,\\
\phi(T)&=&\beta(X_2)-X_{1}^{m}X_4,\\
\phi(Y)&=& F(\phi(Z), \phi(T))/X_1^m.
\end{eqnarray*}
Now ${X_1}^mY-F(Z, T)\in $ ker $\phi$ is an irreducible polynomial of $k[X_1, Y, Z, T]$. 
Since tr.deg$_k({\rm Im}(\phi))=3$,
ker $\phi$ is a height one prime ideal. 
Hence ker $\phi= ({X_1}^mY-F(Z, T))$. 
Therefore, $\phi$ induces an injective ring homomorphism 
$\tilde{\phi}:A\rightarrow B$. We now identify $A$ with the subring $\tilde{\phi}(A)$ of $B$
and let $y=\phi(Y), z=\phi(Z), t=\phi(T)$. 
Consider the triangular derivation $D$ of $B$ defined by: 
$$
D=X_{1}^{m} \partial_{X_2}+\alpha'(X_2)\partial_{X_3}+\beta'(X_2)\partial_{X_4} \in \lnd_k(B).
$$
Let $A_0=$ ker$(D)$.  We show that $A=A_0$.
Clearly, $A\subseteq A_0$. We observe that $B[{X_1}^{-1}]=A[{X_1}^{-1}][X_2]$
and hence $A[{X_1}^{-1}]$ is algebraically closed in $B[{X_1}^{-1}]$. 
It then follows that $A[{X_1}^{-1}]=A_0[{X_1}^{-1}]$. Let $P=X_1B \cap A$. 
Then $A/P= k[\alpha(X_2), \beta(X_2), \bar{y}]$, 
where $\bar{y}$ is the image of $y$ in $B/X_1B= k[X_2, X_3, X_4]$.
Note that 
$$
\bar{y}= -X_3 F_Z(\alpha(X_2), \beta(X_2)) -X_4 F_T (\alpha(X_2), \beta(X_2)),
$$
and that $F$ being a non-constant polynomial, either $F_Z$ or $F_T$ is a non-zero polynomial in $k[Z, T]$.  
Moreover, since $F(Z, T)\in k[Z, T]$ generates the kernel of the $k$-algebra homomorphism 
$k[Z, T]\rightarrow k[\alpha(X_2), \beta(X_2)]$, it follows that 
either $F_Z(\alpha(X_2), \beta(X_2)) \neq 0$ or $F_T(\alpha(X_2), \beta(X_2)) \neq 0$.
Therefore,
$\bar{y}$ is transcendental over $k[\alpha(X_2), \beta(X_2)]$.
Thus tr.deg$_k(A/P)=2$ and hence the  height of $P$ is $1$. Thus $P= X_1 A$.
Therefore, the canonical map $A/X_1A \to B/X_1B$ is injective 
and hence the canonical map $A/X_1A \to A_0/X_1A_0$ is injective.
Hence, by Lemma \ref{lemlast}, $A=A_0=$ ker$(D)$. 

\noindent
(a) $\implies$ (b):
Let $x_1$ denote the image of $X_1$ in $A$.  Since $A[1/x_1]=k[x_1, 1/x_1]^{[2]}$, 
$A/x_1A = C^{[1]}$ and $C$ is an integral domain, the result 
follows from Theorem \ref{th7} and Proposition \ref{propa}.
\end{proof}

\begin{rem}\label{r11}
{\em  (a) If $\deg_W(\alpha(W))$ and $\deg_W(\beta(W))$ are at least two and \\
${\rm gcd}(\mbox{deg}(\alpha(W)), \mbox{deg}(\beta(W)))=1$, 
then $C=k[\alpha(W), \beta(W)]\hookrightarrow k[W]$ is a non-regular ring birational to $k[W]$. 
Thus there exist many rings $C$ which satisfies (ii) and (iii)(b) of Theorem \ref{MT}.
If gcd$(\alpha'(W), \beta'(W))=1$ then the derivation 
$D$ in Theorem \ref{MT} (iii) is fixed point free.

(b) Consider a polynomial $F(Z, T) \in k[Z, T]$ such that 
the ring $k[Z, T]/(F(Z, T)) \cong k[\alpha(W), \beta(W)](\hookrightarrow k[W])$ 
has exactly one singularity and satisfies gcd$(\alpha'(W), \beta'(W))=~1$.
Then, for such a polynomial $F(Z, T)$, 
the ring $A=k[X_1, Y, Z, T]/(X_1Y-F(Z, T))$ has exactly one singularity. 
By Theorem \ref{MT}(iii) and (a), there exists a fixed point free  $k[X_1]$-linear 
locally nilpotent derivation $D$ of $k[X_1]^{[3]}$ such 
that the ring $A$ is isomorphic to ker$(D)$ and $K_0(A)$ is not finitely generated. 
An explicit example of such an $F$ and $D$ is given below (Example \ref{ex}).  
However, if $D$ is a  $k[X_1]$-linear locally nilpotent derivation of $k[X_1, X_2, X_3, X_4]$ 
such that the kernel of $D$ is a regular ring,
then all projective modules over ker$(D)$ are free (cf. Theorem \ref{t5}). 

(c) We can extend the derivation in Theorem \ref{MT}(iii) to $k[X_1,\ldots, X_n]$ 
for any $n\geq 5$ by setting $DX_i=0$ for $i\geq 5$. Then 
ker$(D)=A[X_5,\ldots, X_n]$. Since $K_0(A)$ is not finitely generated, 
$K_0(A[X_5,\ldots, X_n])$ is also not finitely generated. Hence for any 
$n\geq 4$, there exists a locally nilpotent derivation on $k^{[n]}$ 
such that projective modules over it's kernel need not be free.

(d) Let $k$ be a field of characteristic zero not necessarily algebraically closed
and $F(Z,T) = Z^2+T^3 \in k[Z,T]$. Then the Picard group of $C:= k[Z,T]/(F)$ is 
the group $(k,+)$ which is not finitely generated. 
It follows from Lemmas \ref{l2} and \ref{l3} that $K_0(A)$ is not finitely generated
where $A= k[U,V, Z,T]/(U^mV-F(Z,T))$. Since 
$C$ is a polynomial curve, it follows from the proof of Theorem \ref{MT}(iii) 
that $A$ is isomorphic to the kernel of a locally nilpotent derivation of $k[X_1, X_2, X_3, X_4]$. 
}
\end{rem}

We now give an explicit fixed point free  $k[X_1]$-linear locally nilpotent derivation $D$ 
of $k[X_1, X_2, X_3,X_4]$ such that the Grothendieck group of ker$(D)$ is not finitely generated.  

\begin{ex}\label{ex}
{\em  Let $n >1$ be an integer, 
$\alpha(W)=W^n$, $\beta (W) = W(W^n +1)$ and  $F(Z,T) =Z (Z+1)^n -T^n$.
Then $k[Z, T]/(F(Z,T)) \cong k[\alpha(W), \beta(W)]$. 
Let $A=k[X_1, Y, Z, T]/({X_1}Y-F(Z, T))$. 
Then $D=X_{1} \partial_{X_2}+(nX_{2}^{n-1})\partial_{X_3}+((n+1)X_{2}^{n}+1)\partial_{X_4}\in$ 
$\lnd_k(k[X_1,\cdots,X_4])$ is a fixed point free derivation with $A\cong \mbox{ker}(D)$
which has only an isolated singularity.
By Proposition \ref{pm}, $K_0(A)$ is not finitely generated. 
In particular, there exist infinitely many non-free (in fact, non-isomorphic) projective modules 
over the kernel of the locally nilpotent derivation $D$. }
\end{ex}

\noindent
{\bf Acknowledgements}
The authors thank Amartya K. Dutta for carefully going through the draft and suggesting 
improvements. The second author also acknowledges Department of Science and Technology for their INSPIRE Research Grant. 

{\small
{}}
\end{document}